\DeclareFontFamily{OMS}{smallo}{}
\DeclareFontShape{OMS}{smallo}{m}{n}{<->s*[.65]cmsy10}{}
\DeclareSymbolFont{smallo@m}{OMS}{smallo}{m}{n}
\DeclareMathSymbol{\smallo}{\mathord}{smallo@m}{79}
\theoremstyle{definition}
\newtheorem{thm}{Theorem}[section]
\newtheorem{definition}[thm]{Definition}
\newtheorem{lemma}[thm]{Lemma}
\newtheorem{cor}[thm]{Corollary}
\newtheorem{prop}[thm]{Proposition}
\newtheorem*{claimunnumbered}{Claim}
\newtheorem{remark}[thm]{Remark}
\newtheorem{convention}[thm]{Convention}
\newtheorem{example}[thm]{Example}
\newtheorem{calculation}[thm]{Calculation}
\newtheorem{ADH}[thm]{ADH}
\newcommand\N{\mathbb{N}}
\newcommand\Z{\mathbb{Z}}
\newcommand\Q{\mathbb{Q}}
\newcommand\R{\mathbb{R}}
\DeclareMathOperator\dv{dv}
\def\I {\operatorname{I}}
\def \d{\operatorname{d}}
\DeclareMathOperator\res{res}
\def \upl{\uplambda}
\renewcommand\epsilon{\varepsilon}
\DeclareFontFamily{U}{fsy}{}
\DeclareFontShape{U}{fsy}{m}{n}{<->s*[.9]psyr}{}
\DeclareSymbolFont{der@m}{U}{fsy}{m}{n}
\DeclareMathSymbol{\der}{\mathord}{der@m}{182}
\author{Allen Gehret}
\title{A tale of two Liouville closures}
\email{agehret2@illinois.edu}
\address{Department of Mathematics, University of Illinois at Urbana-Champaign, Urbana, Illinois 61801}
\date{\today}
\keywords{$H$-fields, asymptotic fields, asymptotic couples, differential-valued fields, Liouville extensions, Liouville closures, $\uplambda$-freeness}
\begin{document}

\maketitle

\begin{abstract}
An $H$-field is a type of ordered valued differential field with a natural interaction between ordering, valuation, and derivation. 
The main examples are Hardy fields and fields of transseries.
Aschenbrenner and van den Dries proved in~\cite{MZ} that every $H$-field $K$ has either exactly one or exactly two Liouville closures up to isomorphism over $K$, but the precise dividing line between these two cases was unknown.
We prove here that this dividing line is determined by $\uplambda$-freeness, a property of $H$-fields that prevents certain deviant behavior. In particular, we show that under certain types of extensions related to adjoining integrals and exponential integrals, the property of $\uplambda$-freeness is preserved. In the proofs we introduce a new technique for studying $H$-fields, the \emph{yardstick argument} which involves the rate of growth of pseudoconvergence.
%
%
%
\end{abstract}

\setcounter{tocdepth}{1}
\tableofcontents

\section{Introduction}

\noindent
Consider the classical ordinary differential equation
\[
\tag{$\ast$} y'+fy = g
\]
where $f$ and $g$ are sufficiently nice real-valued functions. To solve ($\ast$), we first perform an \emph{exponential integration} to obtain the so-called \emph{integrating factor} $\mu = \exp(\int f)$. Then we perform an \emph{integration} to obtain a solution $y = \mu^{-1} \int(g\mu)$.
In this paper, we wish to consider integration and exponential integration in the context of \emph{$H$-fields}. $H$-fields and all other terms used in this introduction will be properly defined in the body of this paper.

\medskip\noindent
$H$-fields are a certain kind of ordered valued differential field introduced in~\cite{MZ} and include all \emph{Hardy fields} containing $\R$; Hardy fields are ordered differential fields of germs of real-valued functions defined on half-lines $(a,+\infty)$, (e.g. see~\cite[Chapitre V]{Bourbaki} or~\cite{RosenlichtHardy,RosenlichtHardyRank}). Other examples include fields of \emph{transseries} such as the \emph{field of logarithmic-exponential transseries} $\mathbb{T}$ and the \emph{field of logarithmic transseries} $\mathbb{T}_{\log}$ (e.g. see~\cite{Ecalle,vdHTRDA,adamtt}). 
Our primary reference for the theory of $H$-fields, and all other things considered in this paper, is the manuscript~\cite{adamtt}.

\medskip\noindent
A real closed $H$-field in which every equation of the form ($\ast$) has a nonzero solution, with $f$ and $g$ ranging over $K$, is said to be \emph{Liouville closed}.
If $K$ is an $H$-field, then a minimal Liouville closed $H$-field extension of $K$ is called a \emph{Liouville closure} of $K$. The main result of~\cite{MZ} is that for any $H$-field $K$, exactly one of the following occurs:
\begin{enumerate}
\item[(I)] $K$ has exactly one Liouville closure up to isomorphism over $K$,
\item[(II)] $K$ has exactly two Liouville closures up to isomorphism over $K$.
\end{enumerate}
\noindent
There are three distinct types of $H$-fields: an $H$-field $K$ either is \emph{grounded}, \emph{has a gap}, or has \emph{asymptotic integration}. 
According to~\cite{MZ}, grounded $H$-fields fall into case (I) and $H$-fields with a gap fall into case (II).
If an $H$-field has asymptotic integration, then it is either in case (I) or (II). However, the precise dividing line between (I) and (II) for asymptotic integration was not known.

\medskip\noindent
The main result of this paper (Theorem~\ref{1or2LClosures}) shows that this dividing line is exactly the property of \emph{$\upl$-freeness}. We prove that if an $H$-field is $\upl$-free, then it is in case (I), and if an $H$-field has asymptotic integration and is not $\upl$-free, then it is in case (II). This follows by combining known facts about $\upl$-freeness from~\cite{adamtt} with our new technical results which show that $\upl$-freeness is preserved under certain adjunctions of integrals and exponential integrals. In order to ``defend'' the $\upl$-freeness of an $H$-field in these types of extensions, we introduce the \emph{yardstick argument}, which concerns the ``rate of pseudo-convergence'' when adjoining integrals and exponential integrals.

\medskip\noindent
In this paper, we use many definitions and cite many results from the manuscript~\cite{adamtt}. As a general expository convention, any lemma, fact, proposition, theorem, corollary, etc. which is directly from~\cite{adamtt} is referred to as ``ADH X.X.X'' instead of something like ``Lemma X.X.X''. If we do cite a result from~\cite{adamtt}, it does not necessarily imply that that result is originally due to the authors of~\cite{adamtt}; for instance, ADH~\ref{KaplanskyLemma} is actually a classical fact of valuation theory due to Kaplansky. 
Furthermore, we shall abbreviate citations~\cite[Lemma 6.5.4(iii)]{adamtt},~\cite[Lemma 3.2]{gehret}, etc. as just~\cite[6.5.4(iii)]{adamtt},~\cite[3.2]{gehret}, etc. when no confusion should arise.

\medskip\noindent
In \S\ref{OrderedAbelianGroups}, we introduce the notion of a subset $S$ of an ordered abelian group $\Gamma$ being \emph{jammed}. A set $S$ being jammed corresponds to the elements near the top of $S$ becoming closer and closer together at an unreasonably fast rate. Being jammed is an exotic property which we later wish to avoid.

\medskip\noindent
In \S\ref{AsymptoticCouples}, we recall the basic theory of asymptotic couples and introduce and study the \emph{yardstick property} of subsets of asymptotic couples. 
Asymptotic couples are pairs $(\Gamma,\psi)$ where $\Gamma$ is an ordered abelian group and $\psi:\Gamma\setminus\{0\}\to\Gamma$ is a map which satisfies, among other things, a valuation-theoretic version of l'H\^{o}pital's rule.
An asymptotic couples often arise as the value groups of $H$-fields, where the map $\psi$ comes from the logarithmic derivative operation $f\mapsto f'/f$ for $f\neq 0$. 
Roughly speaking, a set $S$ has the yardstick property if for any element $\gamma\in S$, there is a larger element $\gamma+\epsilon(\gamma)\in S$ for a certain ``yardstick'' $\epsilon(\gamma)>0$ which depends on $\gamma$ and which we can explicitly describe.
In contrast to the notion of being jammed, the yardstick property is a desirable tame property.
In \S\ref{AsymptoticCouples} we show, among other things, that the two properties are incompatible, except in a single degenerate case.
Asymptotic couples were introduced by Rosenlicht in~\cite{differentialvaluation1,differentialvaluations,differentialvaluationII} in order to study the value group of a differential field with a so-called \emph{differential valuation}, what we call here a \emph{differential-valued field}. 
For more on asymptotic couples, including the extension theory of asymptotic couples and some model-theoretic results concerning the asymptotic couples of $\mathbb{T}$ and $\mathbb{T}_{\log}$, see~\cite{CAC,someremarks,gehret,gehretNIP} and~\cite[\S6.5 and \S9.2]{adamtt}.

\medskip\noindent
In \S\ref{ValuedFields} we recall definitions concerning pseudocauchy sequences in valued fields and some of the elementary facts concerning pseudocauchy sequences. The main result of \S\ref{ValuedFields} is Lemma~\ref{RationalKaplanskyLemma} which is a rational version of \emph{Kaplansky's Lemma} (ADH~\ref{KaplanskyLemma}). We assume the reader is familiar with basic valuation theory, including notions such as \emph{henselianity}. As a general reference, see~\cite[Chapters 2 and 3]{adamtt} or~\cite{EnglerPrestel}.

\medskip\noindent
In \S\ref{DFDVFHF} we give the definitions and relevant properties of \emph{differential fields}, \emph{valued differential fields}, \emph{asymptotic fields}, \emph{pre-differential-valued fields}, \emph{differential-valued fields}, \emph{pre-$H$-fields} and \emph{$H$-fields}. These are the types of fields we will be concerned with in the later sections. Nearly everything from this section is from~\cite{adamtt} except for Lemmas~\ref{diffeqlemma} and~\ref{asympdiffeqlemma} which are needed in our proof of Theorem~\ref{1or2LClosures}.

\medskip\noindent
In \S\ref{uplfreenesssection} we give a survey of the property of $\upl$-freeness, citing many definitions and results from~\cite[\S11.5 and \S11.6]{adamtt}. Many of these results we cite, and later use, involve situations where $\upl$-freeness is preserved in certain valued differential field extensions. The main result of this section is Proposition~\ref{yardstickprop} which shows that a rather general type of field extension preserves $\upl$-freeness. Proposition~\ref{yardstickprop} is related to the yardstick property of \S\ref{AsymptoticCouples}.

\medskip\noindent
In \S\ref{smallexpintsection}, \S\ref{smallintsection}, and \S\ref{bigintsection}, we show that under various circumstances, if a pre-differential-valued field or a pre-$H$-field $K$ is $\upl$-free, and we adjoin an integral or an exponential integral to $K$ for an element in $K$ that does not already have an integral or exponential integral, then the resulting field extension will also be $\upl$-free. The arguments in all three sections mirror one another and the main results, Propositions~\ref{lambdafreesmallexpint},~\ref{lambdafreesmallint}, and~\ref{lambdafreebigint} are all instances of Proposition~\ref{yardstickprop}.

\medskip\noindent
In \S\ref{dvhull}, and \S\ref{integrationclosure} we give two minor applications of the results of \S\ref{smallexpintsection}, \S\ref{smallintsection}, and \S\ref{bigintsection}. In \S\ref{dvhull} we show that $\upl$-freeness is preserved when passing to the \emph{differential-valued hull} of a $\upl$-free pre-$\d$-field $K$ (Theorem~\ref{dvKuplfree}). In $\S\ref{integrationclosure}$ we show that for $\upl$-free $\d$-valued fields $K$, the minimal henselian, integration-closed extension $K(\int)$ of $K$ is also $\upl$-free (Theorem~\ref{uplfreeintegrationclosure}).

\medskip\noindent
In \S\ref{LiouvilleClosures} we prove the main result of this paper, Theorem~\ref{1or2LClosures}. Combining this with \S\ref{dvhull}, we also give a generalization of Theorem~\ref{1or2LClosures} to the setting of pre-$H$-fields (Corollary~\ref{1or2LClosuresHK}). Finally, we provide proofs of claims made in~\cite{MZ} and~\cite{ADA} (Corollary~\ref{exists2equivalences} and Remark~\ref{MZerrata}).

\subsection{Conventions}
\label{conventions}
Throughout, $m$ and $n$ range over the set $\N = \{0,1,2,3,\ldots\}$ of natural numbers.
By  ``ordered set'' we mean ``totally ordered set''.

\medskip\noindent
Let $S$ be an ordered set. 
Below, the ordering on $S$ will be denoted by $\leq$, and a subset of $S$ is viewed as ordered by the induced ordering. 
We put $S_{\infty}:= S\cup\{\infty\}$, $\infty\not\in S$, with the ordering on $S$ extended to a (total) ordering on $S_{\infty}$ by $S<\infty$. 
Suppose that $B$ is a subset of $S$. 
We put $S^{>B}:=\{s\in S:s>b\text{ for every $b\in B$}\}$ and we denote $S^{>\{a\}}$ as just $S^{>a}$; similarly for $\geq, <,$ and $\leq$ instead of $>$. For $a,b\in S$ we put
\[
[a,b]\ :=\ \{x\in S: a\leq x\leq b\}.
\]
A subset $C$ of $S$ is said to be \textbf{convex} in $S$ if for all $a,b\in C$ we have $[a,b]\subseteq C$.
A subset $A$ of $S$ is said to be a \textbf{downward closed} in $S$, if for all $a\in A$ and $s\in S$ we have $s<a\Rightarrow s\in A$. For $A\subseteq S$ we put
\[
A^{\downarrow}\ :=\ \{s\in S:\text{$s\leq a$ for some $a\in A$}\},
\]
which is the smallest downward closed subset of $S$ containing $A$.

\medskip\noindent
A \textbf{well-indexed sequence} is a sequence $(a_{\rho})$ whose terms $a_{\rho}$ are indexed by the elements $\rho$ of an infinite well-ordered set without a greatest element.

\medskip\noindent
Suppose that $G$ is an ordered abelian group. Then we set $G^{\neq}:=G\setminus\{0\}$. Also, $G^{<}:= G^{<0}$; similarly for $\geq,\leq,$ and $>$ instead of $<$. We define $|g| := \max\{g,-g\}$ for $g\in G$. For $a\in G$, the \textbf{archimedean class} of $a$ is defined by
\[
[a]\ :=\ \{g\in G: |a|\leq n|g|\text{ and }|g|\leq n|a|\text{ for some }n\geq 1\}.
\]
The archimedean classes partition $G$. Each archimedean class $[a]$ with $a\neq 0$ is the disjoint union of the two convex sets $[a]\cap G^{<}$ and $[a]\cap G^{>}$. We order the set $[G]:=\{[a]:a\in G\}$ of archimedean classes by
\[
[a]<[b]\ :\Longleftrightarrow\ n|a|<|b|\text{ for all }n\geq 1.
\]
We have $[0]<[a]$ for all $a\in G^{\neq}$, and
\[
[a]\leq[b]\ :\Longleftrightarrow\ |a|\leq n|b| \text{ for some } n\geq 1.
\]
We shall consider $G$ to be an ordered subgroup of its \textbf{divisible hull} $\Q G$. The divisible hull of $G$ is the divisible abelian group $\Q G = \Q\otimes_{\Z}G$ equipped with the unique ordering which makes it an ordered abelian group containing $G$ as an ordered subgroup.

\section{Ordered abelian groups}
\label{OrderedAbelianGroups}

\noindent
\emph{In this section let $\Gamma$ be an ordered abelian group and let $S\subseteq\Gamma$.} Given $\alpha\in\Gamma$ and $n\geq 1$, we define:
\[
\alpha+nS\ :=\ \{\alpha+n\gamma:\gamma\in S\}.
\]

\noindent
A set of the form $\alpha+nS$ is called an \textbf{affine transform} of $S$. Many qualitative properties of a set $S\subseteq\Gamma$ are preserved when passing to an affine transform, for instance:
\begin{lemma}
\label{suptranslates}
$S$ has a supremum in $\Q\Gamma$ iff $\alpha+nS$ does.
\end{lemma}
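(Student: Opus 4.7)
The plan is to exhibit the map $\varphi \colon \Q\Gamma \to \Q\Gamma$ defined by $\varphi(x) := \alpha + nx$ as an order-preserving bijection, and then invoke the general fact that order isomorphisms preserve suprema. First I would check that $\varphi$ is a bijection of $\Q\Gamma$ onto itself: its inverse is $y \mapsto (y-\alpha)/n$, which is well-defined because $\Q\Gamma$ is divisible and $n\geq 1$. Since $n\geq 1 > 0$, both $\varphi$ and $\varphi^{-1}$ are strictly increasing.

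Next I would record the elementary observation that for any strictly increasing bijection $\varphi$ of an ordered set onto itself, any subset $T$, and any element $\beta$ of the ambient ordered set: $\beta$ is an upper bound of $T$ iff $\varphi(\beta)$ is an upper bound of $\varphi(T)$ (by applying $\varphi$ and $\varphi^{-1}$ termwise), and the same equivalence holds for ``least upper bound''. Consequently, $T$ has a supremum iff $\varphi(T)$ does, and in that case $\sup \varphi(T) = \varphi(\sup T)$.

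Applying this with $T = S \subseteq \Gamma \subseteq \Q\Gamma$, so that $\varphi(T) = \alpha + nS$, yields the lemma. There is no real obstacle here; the only subtle point is that the suprema must be taken in $\Q\Gamma$ rather than $\Gamma$, since the inverse map requires division by $n$.
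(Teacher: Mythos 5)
Your proof is correct and is exactly the intended argument (the paper states this lemma without proof): the affine map $x\mapsto\alpha+nx$ is an order-automorphism of $\Q\Gamma$ since $n\geq 1$ and $\Q\Gamma$ is divisible, and order-automorphisms preserve the existence of suprema. You also correctly identify the one subtlety, namely that the suprema must be taken in $\Q\Gamma$ so that division by $n$ is available.
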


\begin{definition}
We say that $S$ is \textbf{jammed (in $\Gamma$)} if $S\neq \emptyset$ does not have a greatest element and for every nontrivial convex subgroup $\{0\}\neq\Delta\subseteq\Gamma$, there is $\gamma_0\in S$ such that for every $\gamma_1\in S^{>\gamma_0}$, $\gamma_1-\gamma_0\in\Delta$.
\end{definition}

\begin{example}
\label{jammedexample}
Suppose $\Gamma\neq\{0\}$ is such that $\Gamma^{>}$ does not have a least element. Then $S:=\Gamma^{<\beta}$ is jammed for every $\beta\in\Gamma$. In particular, $\Gamma^{<}$ is jammed.
\end{example}

\noindent
Most $\Gamma\neq \{0\}$ we will deal with are either divisible or else $[\Gamma^{\neq}]$ does not have a least element and so Example~\ref{jammedexample} will provide a large collection of jammed subsets for such $\Gamma$. Of course, not all jammed sets are of the form $S^{\downarrow} = \Gamma^{<\beta}$.

\medskip\noindent
Whether or not $S$ is jammed in $\Gamma$ depends on the archimedean classes of $\Gamma$ in the following way:

\begin{lemma}
Let $\Gamma_1$ be an ordered abelian group extension of $\Gamma$ such that $[\Gamma]$ is coinitial in $[\Gamma_1]$. Then  $S$ is jammed in $\Gamma$ iff $S$ is jammed in $\Gamma_1$.
\end{lemma}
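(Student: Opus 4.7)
The plan is to reduce the equivalence to the bijective correspondence between nontrivial convex subgroups of an ordered abelian group and nonempty downward-closed subsets of its strictly positive archimedean classes. The two purely set-theoretic conditions in the definition of jammedness (that $S\neq\emptyset$ and has no greatest element) are intrinsic to $S$ as an ordered set and need no verification, so only the quantifier over nontrivial convex subgroups requires attention.

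For the forward direction, let $\{0\}\neq\Delta_1\subseteq\Gamma_1$ be a nontrivial convex subgroup of $\Gamma_1$, and set $\Delta:=\Delta_1\cap\Gamma$, which is a convex subgroup of $\Gamma$. To see $\Delta\neq\{0\}$: pick any nonzero $a\in\Delta_1$ and invoke the coinitiality of $[\Gamma]$ in $[\Gamma_1]$ to obtain $\gamma\in\Gamma^{\neq}$ with $[\gamma]\leq[a]$; this gives $|\gamma|\leq n|a|$ for some $n\geq 1$, and since $n|a|\in\Delta_1$, convexity of $\Delta_1$ forces $\gamma\in\Delta_1\cap\Gamma=\Delta$. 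Now jammedness of $S$ in $\Gamma$ applied to $\Delta$ produces $\gamma_0\in S$ with $\gamma_1-\gamma_0\in\Delta\subseteq\Delta_1$ for every $\gamma_1\in S^{>\gamma_0}$, establishing jammedness in $\Gamma_1$ for this $\Delta_1$.

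For the reverse direction, let $\{0\}\neq\Delta\subseteq\Gamma$ be a nontrivial convex subgroup, and extend it to
\[
\Delta_1\ :=\ \{x\in\Gamma_1:|x|\leq d\text{ for some }d\in\Delta^{>}\},
\]
which is routinely checked to be a convex subgroup of $\Gamma_1$, is nontrivial since $\Delta^{>}\neq\emptyset$, and satisfies $\Delta_1\cap\Gamma=\Delta$ by the convexity of $\Delta$ in $\Gamma$. Jammedness of $S$ in $\Gamma_1$ applied to $\Delta_1$ yields $\gamma_0\in S$ with $\gamma_1-\gamma_0\in\Delta_1$ for every $\gamma_1\in S^{>\gamma_0}$; since $\gamma_1-\gamma_0\in\Gamma$, we in fact obtain $\gamma_1-\gamma_0\in\Delta_1\cap\Gamma=\Delta$. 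The only real subtlety is the use of coinitiality in the forward direction, to prevent $\Delta_1\cap\Gamma$ from collapsing to $\{0\}$; the reverse direction does not use the coinitiality hypothesis at all.
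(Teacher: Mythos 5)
Your proof is correct. The paper states this lemma without proof, and your argument supplies the expected one: convex subgroups of $\Gamma_1$ restrict to convex subgroups of $\Gamma$ (with coinitiality of the archimedean classes guaranteeing the restriction stays nontrivial), and convex subgroups of $\Gamma$ generate convex subgroups of $\Gamma_1$ whose trace on $\Gamma$ is the original; your observation that only the forward direction needs the coinitiality hypothesis is also accurate. The one cosmetic point: taken literally, $[\Gamma]$ contains the minimum class $[0]$, so "coinitial" must be read (as you implicitly do) as coinitiality of the nonzero classes.
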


\noindent
Being jammed is also preserved by affine transforms:

\begin{lemma}
\label{jammedtranslates}
$S$ is jammed iff $\alpha+nS$ is jammed.
\end{lemma}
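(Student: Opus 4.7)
The plan is to verify that each of the three clauses in the definition of \textbf{jammed} is preserved (in both directions) under the map $\gamma \mapsto \alpha+n\gamma$ sending $S$ to $\alpha+nS$. Since this map is a strictly order-preserving bijection $\Gamma\to\Gamma$, clauses (1) $S\neq\emptyset$ and (2) ``$S$ has no greatest element'' transfer immediately in both directions.

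The nontrivial clause is (3), and here the key observation is the following fact about convex subgroups: for any convex subgroup $\Delta\subseteq\Gamma$, any $\delta\in\Gamma$, and any $n\geq 1$,
\[
n\delta\in\Delta\ \Longleftrightarrow\ \delta\in\Delta.
\]
The ``$\Leftarrow$'' direction is clear since $\Delta$ is a subgroup. For ``$\Rightarrow$'', if $\delta\geq 0$ then $0\leq\delta\leq n\delta\in\Delta$, and convexity gives $\delta\in\Delta$; the case $\delta<0$ follows by negation.

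Using this, fix a nontrivial convex subgroup $\{0\}\neq\Delta\subseteq\Gamma$. If $S$ is jammed, pick $\gamma_0\in S$ witnessing clause (3) for $S$ with this $\Delta$. Then $\alpha+n\gamma_0\in\alpha+nS$; and for any $\alpha+n\gamma_1\in(\alpha+nS)^{>\alpha+n\gamma_0}$ we have $\gamma_1>\gamma_0$ in $S$, so $\gamma_1-\gamma_0\in\Delta$, whence
\[
(\alpha+n\gamma_1)-(\alpha+n\gamma_0)\ =\ n(\gamma_1-\gamma_0)\ \in\ \Delta.
\]
Thus $\alpha+n\gamma_0$ witnesses clause (3) for $\alpha+nS$. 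Conversely, if $\alpha+nS$ is jammed, take a witness of the form $\alpha+n\gamma_0$ for the same $\Delta$; then for any $\gamma_1\in S^{>\gamma_0}$, the displayed equation gives $n(\gamma_1-\gamma_0)\in\Delta$, and by the key observation $\gamma_1-\gamma_0\in\Delta$, so $\gamma_0$ witnesses clause (3) for $S$.

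There is no real obstacle here; the only mild subtlety is the ``divisibility in both directions'' property of convex subgroups, which accounts for the factor of $n$ disappearing cleanly. Everything else is a direct translation along the order-isomorphism $\gamma\mapsto\alpha+n\gamma$.
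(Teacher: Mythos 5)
Your proof is correct and follows essentially the same route as the paper: transport the witness $\gamma_0$ along $\gamma\mapsto\alpha+n\gamma$ in the forward direction, and in the converse use convexity of $\Delta$ to pass from $n(\gamma_1-\gamma_0)\in\Delta$ back to $\gamma_1-\gamma_0\in\Delta$. You simply make explicit (with proof) the ``$n\delta\in\Delta\Leftrightarrow\delta\in\Delta$'' fact that the paper invokes in one line, and you also check the nonemptiness and no-greatest-element clauses, which the paper leaves tacit.
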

\begin{proof}
($\Rightarrow$) Let $\Delta\subseteq\Gamma$ be a nontrivial convex subgroup. Let $\gamma_0\in S$ be such that for every $\gamma_1\in S^{>\gamma_0}$, $\gamma_1-\gamma_0\in\Delta$. Consider the element $\delta_0:= \alpha+n\gamma_0\in \alpha+nS$. Let $\delta_1\in (\alpha+nS)^{>\delta_0}$. Then $\delta_1 = \alpha+n\gamma_1$ for some $\gamma_1\in S^{>\gamma_0}$ and $\delta_1-\delta_0 = n(\gamma_1-\gamma_0)\in\Delta$. We conclude that $\alpha+nS$ is jammed.

($\Leftarrow$) Let $\Delta\subseteq\Gamma$ be a nontrivial convex subgroup. Let $\delta_0 = \alpha+n\gamma_0\in \alpha+nS$ be such that $\delta_1-\delta_0\in\Delta$ for all $\delta_1\in (\alpha+nS)^{>\delta_0}$. Then for $\gamma_1\in S^{>\gamma_0}$ we have $\delta_1:= \alpha+n\gamma_1\in (\alpha+nS)^{>\delta_0}$ and so $\delta_1-\delta_0 = n(\gamma_1-\gamma_0)\in\Delta$. As $\Delta$ is convex, it follows that $\gamma_1-\gamma_0\in\Delta$. We conclude that $S$ is jammed.
\end{proof}


\noindent
Whether or not $S$ is jammed depends only on the downward closure $S^{\downarrow}$ of $S$:

\begin{lemma}
\label{downwardjammed}
$S$ is jammed iff $S^{\downarrow}$ is jammed.
\end{lemma}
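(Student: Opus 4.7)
The plan is to check each clause of the definition of jammed separately. First, note that $S \neq \emptyset$ iff $S^{\downarrow} \neq \emptyset$ is immediate. Next, $S$ has a greatest element iff $S^{\downarrow}$ does: if $m = \max S$ then $m = \max S^{\downarrow}$ since every element of $S^{\downarrow}$ lies below some element of $S$, hence below $m$; conversely if $m = \max S^{\downarrow}$, then $m \leq a$ for some $a \in S \subseteq S^{\downarrow}$, forcing $a = m \in S$.

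It remains to transfer the convex-subgroup condition. For $(\Rightarrow)$, fix a nontrivial convex subgroup $\Delta$ and let $\gamma_0 \in S$ witness jammedness of $S$ for $\Delta$. I claim the same $\gamma_0$ (now viewed as an element of $S^{\downarrow}$) works for $S^{\downarrow}$: given any $\delta_1 \in (S^{\downarrow})^{>\gamma_0}$, there is $a \in S$ with $\delta_1 \leq a$, and then $a \in S^{>\gamma_0}$, so $a - \gamma_0 \in \Delta$ by hypothesis. Since $0 < \delta_1 - \gamma_0 \leq a - \gamma_0$ and $\Delta$ is convex, $\delta_1 - \gamma_0 \in \Delta$.

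For $(\Leftarrow)$, fix a nontrivial convex subgroup $\Delta$ and let $\gamma_0 \in S^{\downarrow}$ be the witness supplied by jammedness of $S^{\downarrow}$. If $\gamma_0 \in S$, take $\gamma_0$ as the witness for $S$. Otherwise choose $a \in S$ with $a > \gamma_0$, which exists because $\gamma_0 \in S^{\downarrow}\setminus S$. Then $a \in (S^{\downarrow})^{>\gamma_0}$, so $a - \gamma_0 \in \Delta$. For any $\gamma_1 \in S^{>a}$ we have $\gamma_1 \in (S^{\downarrow})^{>\gamma_0}$, hence $\gamma_1 - \gamma_0 \in \Delta$, and therefore
\[
\gamma_1 - a \;=\; (\gamma_1 - \gamma_0) - (a - \gamma_0) \;\in\; \Delta.
\]
So $a$ is a witness for $S$, and $S$ is jammed.

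There is no real obstacle; the only mildly delicate point is the case distinction in $(\Leftarrow)$, where the witness supplied for $S^{\downarrow}$ might lie strictly below every element of $S$, in which case one must replace it by some $a \in S$ above it and use the convex subgroup's closure under differences to recover the required inclusion.
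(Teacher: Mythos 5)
Your proof is correct and complete; the paper states Lemma~\ref{downwardjammed} without proof, and your argument (transferring the no-greatest-element condition, pushing a witness $\gamma_0\in S$ up to $S^{\downarrow}$ via convexity of $\Delta$, and pulling a witness back down to $S$ by replacing it with some $a\in S$ above it and using closure of $\Delta$ under differences) is exactly the intended routine verification. No issues.
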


\section{Asymptotic couples}\label{AsymptoticCouples}
\noindent
An \textbf{asymptotic couple} is a pair
$(\Gamma, \psi)$ where $\Gamma$ is an ordered abelian group and
$\psi: \Gamma^{\ne} \to \Gamma$ satisfies for all $\alpha,\beta\in\Gamma^{\neq}$,
\begin{itemize}
\item[(AC1)] $\alpha+\beta\neq 0 \Longrightarrow \psi(\alpha+\beta)\geq \min(\psi(\alpha),\psi(\beta))$;
\item[(AC2)] $\psi(k\alpha) = \psi(\alpha)$ for all $k\in\Z^{\neq}$, in particular, $\psi(-\alpha) = \psi(\alpha)$;
\item[(AC3)] $\alpha>0 \Longrightarrow \alpha+\psi(\alpha)>\psi(\beta)$.
\end{itemize}
If in addition for all $\alpha,\beta\in\Gamma$,
\begin{itemize}
\item[(HC)] $0<\alpha\leq\beta\Rightarrow \psi(\alpha)\geq \psi(\beta)$,
\end{itemize}
then $(\Gamma,\psi)$ is said to be of \textbf{$H$-type}, or to be an \textbf{$H$-asymptotic couple}.

\medskip\noindent
By convention we extend $\psi$ to all of $\Gamma$ by setting $\psi(0):=\infty$. Then $\psi(\alpha+\beta)\geq\min(\psi(\alpha),\psi(\beta))$ holds for all $\alpha,\beta\in\Gamma$, and construe $\psi:\Gamma\to\Gamma_{\infty}$ as a (non-surjective) valuation on the abelian group $\Gamma$. If $(\Gamma,\psi)$ is of $H$-type, then this valuation is convex in the sense of~\cite[\S2.4]{adamtt}.

\medskip\noindent
For $\alpha\in\Gamma^{\neq}$ we shall also use the following notation:
\[
\alpha^{\dagger}\ :=\ \psi(\alpha), \quad \alpha'\ :=\ \alpha+\psi(\alpha).
\]
The following subsets of $\Gamma$ play special roles:
\[
(\Gamma^{\neq})'\ :=\ \{\gamma':\gamma\in\Gamma^{\neq}\}, \quad (\Gamma^{>})'\ :=\ \{\gamma':\gamma\in\Gamma^{>}\},
\]
\[
\Psi\ :=\ \psi(\Gamma^{\neq})\ =\ \{\gamma^{\dagger}:\gamma\in\Gamma^{\neq}\}\ =\ \{\gamma^{\dagger}:\gamma\in\Gamma^{>}\}.
\]

\medskip\noindent
Note that by (AC3) we have $\Psi<(\Gamma^{>})'$. It is also the case that $(\Gamma^{<})'<(\Gamma^{>})'$:

\begin{ADH}
\label{derivativestrictlyincreasing}
The map $\gamma\mapsto \gamma' = \gamma+\psi(\gamma):\Gamma^{\neq}\to\Gamma$ is strictly increasing. In particular:
\begin{enumerate}
\item $(\Gamma^{<})'<(\Gamma^{>})'$, and
\item for $\beta\in\Gamma$ there is at most one $\alpha\in\Gamma^{\neq}$ such that $\alpha' = \beta$.
\end{enumerate}
\end{ADH}
\begin{proof}
This follows from~\cite[6.5.4(iii)]{adamtt}.
\end{proof}

\begin{ADH}\cite[9.2.4]{adamtt}
\label{atmostonegap}
There is at most one $\beta$ such that
\[
\Psi\ <\ \beta\ <\ (\Gamma^{>})'.
\]
If $\Psi$ has a largest element, there is no such $\beta$.
\end{ADH}

\begin{definition}
Let $(\Gamma,\psi)$ be an asymptotic couple. If $\Gamma = (\Gamma^{\neq})'$, then we say that $(\Gamma,\psi)$ has \textbf{asymptotic integration}. If there is $\beta\in\Gamma$ as in ADH~\ref{atmostonegap}, then we say that $\beta$ is a \textbf{gap} in $(\Gamma,\psi)$ and that $(\Gamma,\psi)$ \textbf{has a gap}. Finally, we call $(\Gamma,\psi)$ \textbf{grounded} if $\Psi$ has a largest element, and \textbf{ungrounded} otherwise.
\end{definition}

\medskip\noindent
The notions of asymptotic integration, gaps and being grounded form an important trichotomy for $H$-asymptotic couples:
\begin{ADH}\cite[9.2.16]{adamtt}
\label{ACtrichotomy}
Let $(\Gamma,\psi)$ be an $H$-asymptotic couple. Then exactly one of the following is true:
\begin{enumerate}
\item $(\Gamma,\psi)$ has a gap, in particular, $\Gamma\setminus (\Gamma^{\neq})' = \{\beta\}$ where $\beta$ is a gap in $\Gamma$;
\item $(\Gamma,\psi)$ is grounded, in particular, $\Gamma\setminus (\Gamma^{\neq})' = \{\max\Psi\}$;
\item $(\Gamma,\psi)$ has asymptotic integration.
\end{enumerate}
\end{ADH}

\begin{remark}
\label{gapremark}
Gaps in $H$-asymptotic couples are the fundamental source of deviant behavior we wish to avoid. If $\beta$ is a gap in an $H$-asymptotic couple $(\Gamma,\psi)$, then there is no $\alpha\in\Gamma$ such that $\alpha' = \beta$, or in other words, $\beta$ cannot be asymptotically integrated. This presents us with an \emph{irreversible choice}: if we wish to adjoin to $(\Gamma,\psi)$ an asymptotic integral for $\beta$, then we have to choose once and for all if that asymptotic integral will be positive or negative. This phenomenon is referred to as the \emph{fork in the road} and is the primary cause of $H$-fields to have two nonisomorphic Liouville closures, as we shall see in \S\ref{LiouvilleClosures} below. Gaps also prove to be a main obstruction in the model theory of asymptotic couples. For more on this, see~\cite{CAC} and~\cite{gehret}.
\end{remark}

\begin{definition}[The Divisible Hull]
\label{divisiblehulldef}
Given an asymptotic couple $(\Gamma,\psi)$, $\psi$ extends uniquely to a map $(\Q\Gamma)^{\neq}\to\Q\Gamma$, also denoted by $\psi$, such that $(\Q\Gamma,\psi)$ is an asymptotic couple. We call $(\Q\Gamma,\psi)$ the \textbf{divisible hull} of $(\Gamma,\psi)$. Here are some basic facts about the divisible hull:
\begin{enumerate}
\item $\psi((\Q\Gamma)^{\neq}) = \Psi = \psi(\Gamma^{\neq})$;
\item if $(\Gamma,\psi)$ is of $H$-type, then so is $(\Q\Gamma,\psi)$;
\item if $(\Gamma,\psi)$ is grounded, then so is $(\Q\Gamma,\psi)$;
\item if $\beta\in\Gamma$ is a gap in $(\Gamma,\psi)$, then it is a gap in $(\Q\Gamma,\psi)$;
\item $(\Gamma^{\neq})' = ((\Q\Gamma)^{\neq})'\cap\Gamma$.
\end{enumerate}
For proofs of these facts, see~\cite[\S 6.5 and 9.2.8]{adamtt}. $(\Gamma,\psi)$ is said to have \textbf{rational asymptotic integration} if $(\Q\Gamma,\psi)$ has asymptotic integration.
\end{definition}

\noindent
\emph{In the rest of this section $(\Gamma,\psi)$ will be an $H$-asymptotic couple with asymptotic integration and $\alpha,\beta,\gamma$ will range over $\Gamma$.}

\begin{definition}
For $\alpha\in\Gamma$ we let $\int\alpha$ denote the unique element $\beta\in\Gamma^{\neq}$ such that $\beta'=\alpha$ and we call $\beta = \int\alpha$ the \textbf{integral} of $\alpha$. This gives us a function $\int:\Gamma\to\Gamma^{\neq}$ which is the inverse of $\gamma\mapsto\gamma':\Gamma^{\neq}\to\Gamma$. We define the \textbf{successor function} $s:\Gamma\to\Psi$ by $\alpha\mapsto \psi(\int\alpha)$. Finally, we define the \textbf{contraction map} $\chi:\Gamma^{\neq}\to\Gamma^{<}$ by $\alpha\mapsto \int\psi(\alpha)$. We extend $\chi$ to a function $\Gamma\to\Gamma^{\leq}$ by setting $\chi(0):= 0$.
\end{definition}

\medskip\noindent
The successor function gets its name from how it behaves on $\psi(\Gamma_{\log}^{\neq})$ in Example~\ref{tlogexample} below (see~\cite{gehret}). 
The contraction map gets its name from the way it contracts archimedean classes in the sense of Lemma~\ref{functionfacts}(\ref{contractionproperty}) below.
Contraction maps originate from the study of precontraction groups and ordered exponential fields (see~\cite{kuhlmann1,kuhlmann2,SKuhlmann}).

\begin{example}[The asymptotic couple of $\mathbb{T}_{\log}$]
\label{tlogexample}
Define the abelian group $\Gamma_{\log}:=\bigoplus_n \mathbb{R}e_n$, equipped with the unique ordering such that $e_n>0$ for all $n$, and $[e_m]>[e_n]$ whenever $m<n$. It is convenient to think of an element $\sum r_ie_i$ of $\Gamma_{\log}$ as the vector $(r_0,r_1,r_2,\ldots)$. Next, we define the map $\psi:\Gamma_{\log}^{\neq}\to\Gamma_{\log}$ by
\[
(\underbrace{0,\ldots,0}_n,\underbrace{r_n}_{\neq 0},r_{n+1},\ldots)\mapsto (\underbrace{1,\ldots,1}_{n+1},0,0,\ldots).
\]
It is easy to verify that $(\Gamma_{\log},\psi)$ is an $H$-asymptotic couple with rational asymptotic integration. Furthermore, the functions $\int$, $s$, and $\chi$ are given by the following formulas:
\begin{enumerate}
\item (Integral) For $\alpha = (r_0,r_1,r_2,\ldots)\in\Gamma_{\log}$, take the unique $n$ such that $r_n\neq 1$ and $r_m=1$ for $m<n$. Then the formula for $\alpha\mapsto \int\alpha$ is given as follows:
\[
\alpha = (\underbrace{1,\ldots,1}_n,\underbrace{r_n}_{\neq 1},r_{n+1},r_{n+2}\ldots) \mapsto \textstyle{\int}\alpha = (\underbrace{0,\ldots,0}_{n},r_n-1,r_{n+1},r_{n+2},\ldots):\Gamma_{\log} \to \Gamma_{\log}^{\neq}
\]
\item (Successor) For $\alpha = (r_0,r_1,r_2,\ldots)\in\Gamma_{\log}$, take the unique $n$ such that $r_n\neq 1$ and $r_m = 1$ for $m<n$. Then the formula for $\alpha\mapsto s(\alpha)$ is given as follows:
\[
\alpha = (\underbrace{1,\ldots,1}_n,\underbrace{r_n}_{\neq 1},r_{n+1},r_{n+1}\ldots) \mapsto s(\alpha) = (\underbrace{1,\ldots,1}_{n+1},0,0,\ldots):\Gamma_{\log}\to\Psi_{\log}\subseteq\Gamma_{\log}
\]
\item (Contraction) If $\alpha=0$, then $\chi(\alpha) = 0$. Otherwise, for $\alpha = (r_0,r_1,r_2,\ldots)\in\Gamma_{\log}^{\neq}$, take the unique $n$ such that $r_n\neq 0$ and $r_k = 0$ for $k<n$. Then the formula for $\alpha\mapsto \chi(\alpha)$ is given as follows:
\[
\alpha = (\underbrace{0,\ldots,0}_{n},\underbrace{r_n}_{\neq 0},r_{n+1},\ldots)\mapsto \chi(\alpha) = (\underbrace{0,\ldots,0}_{n+1},-1,0,0,\ldots):\Gamma_{\log}\to\Gamma^{\leq}_{\log}
\]
\end{enumerate}
For more on this example, see~\cite{gehret,gehretNIP}.
\end{example}

\begin{lemma}
\label{functionfacts}
For all $\alpha,\beta\in\Gamma$ and $\gamma\in\Gamma^{\neq}$:
\begin{enumerate}
\item\label{integralidentity} (Integral Identity) $\int\alpha = \alpha-s\alpha$.
\item\label{successoridentity} (Successor Identity) If $s\alpha<s\beta$, then $\psi(\beta-\alpha) = s\alpha$.
\item\label{fixedpointidentity} (Fixed Point Identity) $\beta=\psi(\alpha-\beta)$ iff $\beta=s\alpha$.
\item\label{successorprogressive} $s\alpha<s^2\alpha$.
\item\label{contractionproperty} $[\chi(\gamma)]<[\gamma]$.
\item\label{contractionproperty2} $\alpha\neq\beta\Longrightarrow [\chi(\alpha)-\chi(\beta)]<[\alpha-\beta]$.
\item\label{idchiincreasing} $\alpha<\beta\Longrightarrow \alpha-\chi(\alpha)<\beta-\chi(\beta)$.
\end{enumerate}
\end{lemma}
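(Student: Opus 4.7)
The plan is to prove the seven items in the order (1), (3), (4), (2), (5), (6), (7), since later items depend on earlier ones. Item (1) is a direct unfolding of definitions: $(\int\alpha)' = \alpha$ reads as $\int\alpha + \psi(\int\alpha) = \alpha$, so substituting $s\alpha = \psi(\int\alpha)$ gives $\int\alpha = \alpha - s\alpha$. Item (3) follows at once from (1): $\beta = s\alpha$ is equivalent to $\alpha - \beta = \int\alpha \in \Gamma^{\neq}$, which amounts to $\alpha \neq \beta$ together with $\psi(\alpha - \beta) = \beta$, and the condition $\beta = \psi(\alpha - \beta)$ already implies $\alpha \neq \beta$.

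For (4), apply (1) to $s\alpha$ to obtain $\int s\alpha = s\alpha - s^2\alpha \in \Gamma^{\neq}$. Since $(\int s\alpha)' = s\alpha \in \Psi$ and $\Psi \cap (\Gamma^{>})' = \emptyset$ by (AC3), we cannot have $\int s\alpha > 0$; hence $\int s\alpha < 0$, giving $s\alpha < s^2\alpha$. Item (2) is the crux. By (1), write $\beta - \alpha = (\int\beta - \int\alpha) + (s\beta - s\alpha)$. The hypothesis $s\alpha < s\beta$ together with (HC) (plus the fact that $\psi$ is constant on archimedean classes in $H$-type) forces a strict inequality $[\int\beta] < [\int\alpha]$, so $\psi(\int\beta - \int\alpha) = \psi(\int\alpha) = s\alpha$. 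For the $s$-summand, apply (4) to both $\alpha$ and $\beta$: $\psi(s\beta - s\alpha) \geq \min(\psi(s\alpha),\psi(s\beta)) = \min(s^2\alpha, s^2\beta) > s\alpha$. The $\int$-summand therefore dominates and $\psi(\beta - \alpha) = s\alpha$.

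Items (5)--(7) then follow cleanly. For (5), the same sign argument from (4), applied to $\psi(\gamma) \in \Psi$, shows $\chi(\gamma) = \int\psi(\gamma) < 0$; rearranging $(\chi(\gamma))' = \psi(\gamma)$ as $\chi(\gamma) = \psi(\gamma) - \psi(\chi(\gamma))$, any violation $[\chi(\gamma)] \geq [\gamma]$ would force $\psi(\chi(\gamma)) \leq \psi(\gamma)$ by (HC) and hence $\chi(\gamma) \geq 0$, a contradiction. For (6), I split on whether $\psi(\alpha) = \psi(\beta)$ (extended with $\psi(0) = \infty$ and $\chi(0) = 0$): the equal case gives $\chi(\alpha) = \chi(\beta)$; in the unequal case, WLOG $[\alpha] > [\beta]$, so $[\alpha - \beta] = [\alpha]$, which by (5) strictly dominates both $[\chi(\alpha)]$ and $[\chi(\beta)]$. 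Finally, (7) rewrites as $\beta - \alpha > \chi(\beta) - \chi(\alpha)$: if the right side is nonpositive the inequality is immediate, and otherwise (6) gives $|\chi(\beta) - \chi(\alpha)| < |\alpha - \beta| = \beta - \alpha$. The main obstacle throughout is (2), where the estimate $\psi(s\beta - s\alpha) > s\alpha$ supplied by (4) is precisely what prevents the $s$-part of the decomposition from spoiling the valuation coming from the $\int$-part.
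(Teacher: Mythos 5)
Items (1), (3), (4), (5), (6) and (7) of your write-up are correct, and in fact more self-contained than the paper's proof, which simply cites \cite{gehret} for (1)--(4) and \cite[9.2.18]{adamtt} for (5)--(6); your arguments for (4), (5), (6), (7) in particular are clean. The problem is item (2), which you rightly call the crux. The step $\min(\psi(s\alpha),\psi(s\beta)) = \min(s^2\alpha,s^2\beta)$ rests on the identification $\psi(s\alpha)=s^2\alpha$, which is false: by definition $s^2\alpha=\psi(\int s\alpha)$ is $\psi$ evaluated at $\int s\alpha = s\alpha-s^2\alpha$, not at $s\alpha$, and these lie in different archimedean classes in general. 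Concretely, in $(\Gamma_{\log},\psi)$ take $\alpha=0$; then $s\alpha=e_0=(1,0,0,\ldots)$ and $\psi(s\alpha)=\psi(e_0)=e_0=s\alpha$, whereas $s^2\alpha=(1,1,0,\ldots)>s\alpha$. This is not a fixable typo: the correct bound $\psi(s\beta-s\alpha)\geq\min(\psi(s\alpha),\psi(s\beta))$ from (AC1) can be exactly $s\alpha$ (as in this example), so it cannot deliver the strict inequality $\psi(s\beta-s\alpha)>s\alpha$ that your decomposition requires in order to conclude $\psi(\beta-\alpha)=\psi(\int\beta-\int\alpha)$.

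The repair is to control archimedean classes of the $s$-summand rather than its $\psi$-value. For $\gamma,\delta\in\Gamma^{\neq}$ with $\psi(\gamma)\neq\psi(\delta)$ one has $[\psi(\gamma)-\psi(\delta)]<[\gamma-\delta]$ (one of the basic asymptotic-couple inequalities in \cite[6.5.4]{adamtt}, essentially due to Rosenlicht). Applied to $\gamma=\int\beta$, $\delta=\int\alpha$ (legitimate, since $s\alpha\neq s\beta$) this gives $[s\beta-s\alpha]<[\int\beta-\int\alpha]$, hence $[\beta-\alpha]=[\int\beta-\int\alpha]=[\int\alpha]$, and therefore $\psi(\beta-\alpha)=\psi(\int\alpha)=s\alpha$, using --- as you already do for the $\int$-summand --- that in an $H$-type couple $\psi$ depends only on the archimedean class. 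With that substitution for the treatment of $s\beta-s\alpha$, the rest of your argument for (2), and everything downstream of it, stands.
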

\begin{proof}
(\ref{integralidentity}) is~\cite[3.2]{gehret}, (\ref{successoridentity}) is~\cite[3.4]{gehret}, (\ref{fixedpointidentity}) is~\cite[3.7]{gehret}, (\ref{successorprogressive}) is~\cite[3.3]{gehret}, and (\ref{contractionproperty}) and (\ref{contractionproperty2}) follow easily from~\cite[9.2.18 (iii,iv)]{adamtt}.
(\ref{idchiincreasing}) follows from (\ref{contractionproperty2}).
\end{proof}

\begin{lemma}
\label{Psioverspill}
Suppose $\alpha\in (\Gamma^{<})'$ and $n\geq1$. Then $\alpha+(n+1)(s\alpha-\alpha)\in (\Gamma^{>})'$.
\end{lemma}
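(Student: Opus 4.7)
My plan is to compute $\alpha + (n+1)(s\alpha - \alpha)$ explicitly in terms of $g := \int\alpha$, recognize the result as an element $\geq (-g)'$ where $-g > 0$, and then invoke asymptotic integration together with $(\Gamma^{<})' < (\Gamma^{>})'$ to conclude.

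Since $\alpha \in (\Gamma^{<})'$, set $g := \int\alpha \in \Gamma^{<}$, so that $\alpha = g' = g + \psi(g)$. By the Integral Identity (Lemma~\ref{functionfacts}(\ref{integralidentity})), $s\alpha - \alpha = -\int\alpha = -g > 0$, hence
\[
\alpha + (n+1)(s\alpha - \alpha)\ =\ \alpha - (n+1)g\ =\ -ng + \psi(g).
\]
Using (AC2), $(-g)' = -g + \psi(-g) = -g + \psi(g)$, and so
\[
\alpha + (n+1)(s\alpha - \alpha)\ =\ (-g)' + (n-1)(-g)\ \geq\ (-g)',
\]
where the inequality is an equality when $n=1$ and strict when $n\geq 2$, using $-g > 0$.

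Since $-g \in \Gamma^{>}$, the element $(-g)'$ lies in $(\Gamma^{>})'$. By asymptotic integration the map $\gamma \mapsto \gamma'$ is a bijection $\Gamma^{\neq} \to \Gamma$, so $\Gamma$ is the disjoint union $(\Gamma^{<})' \sqcup (\Gamma^{>})'$; and by ADH~\ref{derivativestrictlyincreasing}(1), $(\Gamma^{<})' < (\Gamma^{>})'$. Consequently, any element of $\Gamma$ which is $\geq$ some element of $(\Gamma^{>})'$ must itself lie in $(\Gamma^{>})'$, which gives $\alpha + (n+1)(s\alpha - \alpha) \in (\Gamma^{>})'$.

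The proof is essentially a one-line calculation and I do not anticipate any real obstacle. The only conceptual step is to recognize the ``yardstick'' $s\alpha - \alpha$ as $-\int\alpha$, after which the rearrangement of $\alpha + (n+1)(s\alpha - \alpha)$ into $(-g)' + (n-1)(-g)$ is immediate from $\alpha = g + \psi(g)$ and (AC2).
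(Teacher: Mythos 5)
Your proof is correct and follows essentially the same computation as the paper: both rewrite $s\alpha-\alpha$ as $-\int\alpha$ via the Integral Identity and use (AC2) to express the result via $\psi(\int\alpha)$. The only cosmetic difference is the finish: the paper recognizes the element exactly as $(-n\int\alpha)'\in(\Gamma^{>})'$, whereas you bound it below by $(-\int\alpha)'$ and invoke the partition $\Gamma=(\Gamma^{<})'\sqcup(\Gamma^{>})'$ together with $(\Gamma^{<})'<(\Gamma^{>})'$ — both steps are valid.
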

\begin{proof}
Suppose $\alpha\in (\Gamma^{<})'$. Then we have
\begin{align*}
\alpha+(n+1)(s\alpha-\alpha)\ &=\ s\alpha + ns\alpha - n\alpha \\
&=\ \psi(\textstyle\int\alpha) + n\psi(\textstyle\int\alpha) - n (\textstyle\int\alpha)' \\
&=\ \psi(\textstyle\int\alpha) + n\psi(\textstyle\int\alpha) - n (\textstyle\int\alpha) - n\psi(\textstyle\int\alpha) \\
&=\ \psi(\textstyle\int\alpha) - n\textstyle\int\alpha \\
&=\ (-n\textstyle\int\alpha)'\in (\Gamma^{>})'.
\end{align*}
The last part follows because $\alpha\in (\Gamma^{<})'$ iff $\int\alpha\in \Gamma^{<}$ iff $-n\int\alpha\in\Gamma^{>}$ iff $(-n\int\alpha)'\in(\Gamma^{>})'$.
\end{proof}

\begin{lemma}
\label{Psijammed}
The sets $\Psi$ and $\Psi^{\downarrow}$ are jammed.
\end{lemma}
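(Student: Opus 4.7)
The plan is to show $\Psi$ is jammed, from which $\Psi^{\downarrow}$ being jammed follows immediately by Lemma~\ref{downwardjammed}. That $\Psi$ is nonempty with no greatest element is easy: asymptotic integration forces $\Gamma\neq\{0\}$ (so $\Psi\neq\emptyset$), and the trichotomy ADH~\ref{ACtrichotomy} rules out the grounded case, so $\Psi$ has no largest element.

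Fix a nontrivial convex subgroup $\Delta\subseteq\Gamma$ and choose $\delta\in\Delta^{>}$. I would set $\gamma_0:=\psi(\delta)\in\Psi$ and claim this witnesses the jammed property. The central identity is
\[
s\gamma_0-\gamma_0\ =\ -\chi(\delta),
\]
which follows by combining $\chi(\delta)=\int\psi(\delta)=\int\gamma_0$ with the Integral Identity $\int\gamma_0=\gamma_0-s\gamma_0$ of Lemma~\ref{functionfacts}(\ref{integralidentity}). By Lemma~\ref{functionfacts}(\ref{contractionproperty}) we have $[\chi(\delta)]<[\delta]$, so in particular $0<-\chi(\delta)<\delta$, and convexity of $\Delta$ then gives $s\gamma_0-\gamma_0\in\Delta$.

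Next I would invoke Lemma~\ref{Psioverspill} (with $n=1$) to bound $\Psi^{>\gamma_0}$ from above. The lemma requires $\gamma_0\in(\Gamma^<)'$; this holds because, under asymptotic integration, every $\gamma\in\Psi$ equals $\beta'$ for some $\beta\in\Gamma^{\neq}$, and $\beta>0$ would put $\gamma\in(\Gamma^>)'$, contradicting $\Psi<(\Gamma^>)'$. Thus $\gamma_0+2(s\gamma_0-\gamma_0)\in(\Gamma^>)'$ lies strictly above all of $\Psi$, so every $\gamma_1\in\Psi^{>\gamma_0}$ satisfies $0<\gamma_1-\gamma_0<2(s\gamma_0-\gamma_0)$. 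Since $2(s\gamma_0-\gamma_0)\in\Delta$ (a sum of two elements of the subgroup $\Delta$), convexity gives $\gamma_1-\gamma_0\in\Delta$, finishing the proof that $\Psi$ is jammed.

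The main obstacle, and the key trick, is spotting the identity $s\gamma_0-\gamma_0=-\chi(\delta)$ when $\gamma_0=\psi(\delta)$: it converts what looks like a delicate ``rate of approach'' question about $\Psi$ near its top into an immediate consequence of the strict archimedean-class drop supplied by the contraction map. Once this is in hand, Lemma~\ref{Psioverspill} produces the needed upper bound on $\Psi^{>\gamma_0}$ with no further work.
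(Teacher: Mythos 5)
Your proof is correct and follows essentially the same route as the paper: both hinge on Lemma~\ref{Psioverspill} with $n=1$ applied to a $\gamma_0$ manufactured from $\delta\in\Delta^{>}$, together with the Integral Identity. The only cosmetic difference is that the paper takes $\gamma_0:=(-\delta)'\in(\Gamma^{<})'=\Psi^{\downarrow}$ so that $s\gamma_0-\gamma_0=\delta$ on the nose (no appeal to the contraction property needed), whereas you take $\gamma_0:=\psi(\delta)\in\Psi$ and pay for it with the extra observation $s\gamma_0-\gamma_0=-\chi(\delta)\in\Delta$.
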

\begin{proof}
By Lemma~\ref{downwardjammed}, it suffices to show that $\Psi^{\downarrow} = (\Gamma^{<})'$ is jammed. By asymptotic integration and ADH~\ref{ACtrichotomy}, $(\Gamma^{<})'$  is nonempty and does not have a largest element. Let $\{0\}\neq\Delta\subseteq\Gamma$ be a nontrivial convex subgroup. Take $\delta\in \Delta^{>}$ and set $\gamma_0:= (-\delta)'\in(\Gamma^{<})'$. Then
\begin{align*}
\gamma_0+2\delta\ &=\ \gamma_0+2(-\textstyle\int(-\delta)') \\
&=\ \gamma_0+2(-\textstyle\int\gamma_0) \\
&=\ \gamma_0+2(s\gamma_0-\gamma_0) \quad\text{(Lemma~\ref{functionfacts}(\ref{integralidentity})).}
\end{align*}
Thus $\gamma_0+2\delta\in(\Gamma^{>})'$ by Lemma~\ref{Psioverspill}. In particular, for every $\gamma_1\in ((\Gamma^{<})')^{>\gamma_0}$, $\gamma_1-\gamma_0<2\delta\in\Delta$. We conclude that $(\Gamma^{<})'$ is jammed.
\end{proof}

\begin{calculation}
\label{yardstickcalculation}
Suppose $\gamma\neq0$. Then
\[
\textstyle\int(\gamma' - \textstyle\int s\gamma')\ =\ \gamma+(s\gamma^{\dagger}-\gamma^{\dagger})\ =\ \gamma-\chi(\gamma).
\]
\end{calculation}
\begin{proof} We begin by showing:
\[
\tag{A} s(\gamma+s\gamma^{\dagger})\ =\ \gamma^{\dagger}
\]
By  (\ref{successoridentity}) and (\ref{successorprogressive}) of Lemma~\ref{functionfacts} we have that
\[
\psi(-\gamma)\ =\ \gamma^{\dagger}<s\gamma^{\dagger}\ =\ \psi(\gamma^{\dagger}-s\gamma^{\dagger}),
\]
which implies
\[
\psi(\gamma^{\dagger}-\gamma-s\gamma^{\dagger})\ =\ \gamma^{\dagger}.
\]
Now by Lemma~\ref{functionfacts}(\ref{fixedpointidentity}), (A) follows.

We now proceed with our main calculation:
\begin{align*}
\textstyle\int (\gamma' - \textstyle\int s\gamma')\ &=\ (\gamma'-\textstyle\int s\gamma') - s(\gamma' - \textstyle\int s\gamma') \quad\text{(Lemma~\ref{functionfacts}(\ref{integralidentity}))} \\
&=\ (\gamma' - s\gamma' + s^2\gamma') - s(\gamma' - s\gamma' + s^2\gamma') \quad\text{(Lemma~\ref{functionfacts}(\ref{integralidentity}))} \\
&=\ (\gamma+\gamma^{\dagger}- \gamma^{\dagger}+s\gamma^{\dagger}) - s(\gamma+\gamma^{\dagger} - \gamma^{\dagger} + s\gamma^{\dagger}) \quad\text{(Def. of $s$ and $'$)} \\
&=\ \gamma+s\gamma^{\dagger} - s(\gamma+s\gamma^{\dagger}) \\
&=\ \gamma+(s\gamma^{\dagger}-\gamma^{\dagger}) \quad\text{(by (A))}
\end{align*}
Finally, note that $-\chi(\gamma) = s\gamma^{\dagger}-\gamma^{\dagger}$ follows from applying Lemma~\ref{functionfacts}(\ref{integralidentity}) to $\gamma^{\dagger}$ and the definition of $\chi$.
\end{proof}

\begin{lemma}
\label{ACyardstick}
Let $\gamma\in (\Gamma^{>})'$. Then
\[
\textstyle\int\gamma\ >\ -\textstyle\int s\gamma\ =\ -\chi\textstyle\int\gamma >0.
\]
Furthermore, if $\gamma_0,\gamma_1\in(\Gamma^{>})'$, then
\[
\gamma_0\ \leq\ \gamma_1 \quad\text{implies}\quad -\textstyle\int s\gamma_0\ \leq\ -\textstyle\int s\gamma_1.
\]
\end{lemma}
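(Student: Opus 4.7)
The lemma has three distinct assertions to verify: the equality $-\int s\gamma = -\chi\int\gamma$, the strict inequalities $\int\gamma > -\int s\gamma > 0$, and the monotonicity statement. The plan is to handle each by directly unwinding the definitions and then invoking the already-established structural lemmas for the contraction map and the derivative.

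For the equality, I would simply observe that by the definitions given just before the lemma, $\chi\int\gamma = \int\psi(\int\gamma) = \int s\gamma$, so negating gives $-\int s\gamma = -\chi\int\gamma$ immediately. Since $\gamma \in (\Gamma^{>})'$ means $\int\gamma \in \Gamma^{>}$, in particular $\int\gamma \neq 0$, and by the very definition of $\chi$ as a map $\Gamma^{\neq}\to\Gamma^{<}$, we get $\chi\int\gamma < 0$, hence $-\chi\int\gamma > 0$. This already gives the rightmost inequality $-\int s\gamma > 0$.

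For the middle inequality $\int\gamma > -\chi\int\gamma$, the key input is Lemma~\ref{functionfacts}(\ref{contractionproperty}), which says $[\chi(\int\gamma)] < [\int\gamma]$. Unwinding the archimedean-class ordering, this means $|\chi\int\gamma| < |\int\gamma|$ (in the sense that even multiples of the left side are below the right). Since $\int\gamma > 0$ and $\chi\int\gamma < 0$, this translates to $-\chi\int\gamma < \int\gamma$, which is what we want. The only subtle point is making sure the sign conventions match up, but with $\int\gamma$ positive and $\chi\int\gamma$ negative this is automatic.

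For the monotonicity assertion, I would chain three monotonicity facts. First, since $\alpha\mapsto\alpha'$ is strictly increasing on $\Gamma^{\neq}$ (ADH~\ref{derivativestrictlyincreasing}), its inverse $\int$ is also strictly increasing on $(\Gamma^{\neq})'$. So $\gamma_0 \leq \gamma_1$ gives $0 < \int\gamma_0 \leq \int\gamma_1$. Second, since $(\Gamma,\psi)$ is of $H$-type, axiom (HC) yields $s\gamma_0 = \psi(\int\gamma_0) \geq \psi(\int\gamma_1) = s\gamma_1$. Third, applying the strictly increasing $\int$ again gives $\int s\gamma_0 \geq \int s\gamma_1$, hence $-\int s\gamma_0 \leq -\int s\gamma_1$. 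I do not expect any real obstacle here; the whole proof is essentially bookkeeping with definitions and the already-recorded properties of $\int$, $s$, $\chi$, and (HC).
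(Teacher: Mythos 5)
Your proof is correct and follows essentially the same route as the paper: the equality is definitional ($\chi\int\gamma=\int\psi(\int\gamma)=\int s\gamma$), positivity comes from $\chi$ landing in $\Gamma^{<}$ (equivalently, $s\gamma\in(\Gamma^{<})'$ as the paper puts it), the middle inequality from Lemma~\ref{functionfacts}(\ref{contractionproperty}), and the monotonicity from chaining the strict monotonicity of $\int$ with (HC). No gaps.
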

\begin{proof}
We have $s\gamma\in (\Gamma^{<})'$ which implies that $-\textstyle\int s\gamma>0$, which gives the second part of the first inequality. For the first part we note that
\begin{align*}
\textstyle\int\gamma\ >\ -\textstyle\int s\gamma\ &\Longleftrightarrow\ \textstyle\int\gamma+\textstyle\int s\gamma\ >\ 0 \\
&\Longleftrightarrow\ \textstyle\int\gamma + \chi\textstyle\int\gamma\ >\ 0
\end{align*}
and this last line is true because $\textstyle\int\gamma>0$ and $[\chi\textstyle\int\gamma] < [\textstyle\int\gamma]$ by Lemma~\ref{functionfacts}(\ref{contractionproperty}).

For the second inequality, note that
\begin{align*}
\gamma_0\ \leq\ \gamma_1\ &\Longrightarrow\ s\gamma_0\ \geq\ s\gamma_1\quad \text{since $\gamma_0,\gamma_1\in(\Gamma^{>})'$} \\
&\Longleftrightarrow\ \textstyle\int s\gamma_0\ \geq\ \textstyle\int s\gamma_1 \quad \text{by ADH~\ref{derivativestrictlyincreasing}}\\
&\Longleftrightarrow\ -\textstyle\int s\gamma_0\ \leq\ -\textstyle\int s\gamma_1. \qedhere
\end{align*}
\end{proof}

\begin{definition}
Let $S$ be a nonempty convex subset of $\Gamma$ without a greatest element. We say that $S$ has the \textbf{yardstick property} if there is $\beta\in S$ such that for every $\gamma\in S^{>\beta}$, $\gamma-\chi(\gamma)\in S$.
\end{definition}

\medskip\noindent
Note that if $S$ is a nonempty convex subset of $\Gamma$ without a greatest element, then $S$ has the yardstick property iff $S^{\downarrow}$ has the yardstick property. The following is immediate from Lemma~\ref{functionfacts}(\ref{idchiincreasing}):

\begin{lemma}
Suppose $S$ is a nonempty convex subset of $\Gamma$ without a greatest element with the yardstick property. Then for every $\gamma\in S$, $\gamma-\chi(\gamma)\in S$.
\end{lemma}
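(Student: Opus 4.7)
The strategy is simply to bootstrap from the ``tail'' version of the yardstick property to the full statement by using convexity of $S$ together with the monotonicity facts already recorded in Lemma~\ref{functionfacts}.

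Fix $\beta \in S$ witnessing the yardstick property, and let $\gamma \in S$ be arbitrary. Since $S$ has no greatest element, I can choose $\gamma_1 \in S$ with $\gamma_1 > \max(\gamma,\beta)$. By the yardstick property applied to $\gamma_1 > \beta$, we get $\gamma_1 - \chi(\gamma_1) \in S$.

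Next I use two elementary observations about the map $x \mapsto x - \chi(x)$. First, because $\chi$ takes values in $\Gamma^{\leq}$ (by definition $\chi(0):=0$ and otherwise $\chi$ lands in $\Gamma^{<}$), we have $\chi(\gamma) \leq 0$, hence $\gamma \leq \gamma - \chi(\gamma)$. Second, Lemma~\ref{functionfacts}(\ref{idchiincreasing}) tells us that $x \mapsto x - \chi(x)$ is strictly increasing, and since $\gamma < \gamma_1$ this yields $\gamma - \chi(\gamma) < \gamma_1 - \chi(\gamma_1)$. Chaining these,
\[
\gamma \ \leq\ \gamma - \chi(\gamma) \ <\ \gamma_1 - \chi(\gamma_1).
\]

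Both $\gamma$ and $\gamma_1 - \chi(\gamma_1)$ lie in $S$, so by convexity of $S$ the element $\gamma - \chi(\gamma)$ lies in $S$ as well. Since $\gamma \in S$ was arbitrary, this completes the argument. No real obstacle arises: the only thing to verify carefully is that $\chi(\gamma)\leq 0$ for all $\gamma\in\Gamma$ so that $\gamma - \chi(\gamma)$ sits above $\gamma$ (rather than below, where convexity would not help).
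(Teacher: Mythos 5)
Your argument is correct and is essentially the paper's intended one: the paper declares the lemma immediate from Lemma~\ref{functionfacts}(\ref{idchiincreasing}), i.e., from the strict monotonicity of $x\mapsto x-\chi(x)$, combined with $\chi(\gamma)\leq 0$ and convexity of $S$, which is exactly the chain $\gamma\leq\gamma-\chi(\gamma)<\gamma_1-\chi(\gamma_1)$ you use.
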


\begin{remark}
The yardstick property says that if you have an element $\gamma\in S$, then you can travel up the set $S$ to a larger element $\gamma-\chi(\gamma)$ in a ``measurable'' way, ie., you can increase upwards at least a distance of $-\chi(\gamma)$ and still remain in $S$. Similar to the property \emph{jammed} from \S\ref{OrderedAbelianGroups}, this is a qualitative property concerning the top of the set $S$. Unlike \emph{jammed}, the yardstick property requires the asymptotic couple structure of $(\Gamma,\psi)$, and the contraction map $\chi$ in particular.
\end{remark}

\noindent
The yardstick property and being jammed are incompatible properties, except in the following case:

\begin{lemma}
\label{jammedyardstick}
Let $S$ be a nonempty convex subset of $\Gamma$ without a greatest element with the yardstick property. Then $S$ is jammed iff $S^{\downarrow} = \Gamma^{<}$.
\end{lemma}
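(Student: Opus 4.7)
The plan is to prove both directions by exploiting the tension between the arbitrarily small differences enforced by being jammed and the minimum jump size $-\chi(\gamma)$ guaranteed by the yardstick property.

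For ($\Leftarrow$), suppose $S^{\downarrow}=\Gamma^{<}$. By Lemma~\ref{downwardjammed} it suffices to show $\Gamma^{<}$ is jammed, and by Example~\ref{jammedexample} this reduces to verifying that $\Gamma^{>}$ has no least element. I argue by contradiction: if $\epsilon$ were the least element of $\Gamma^{>}$, then $\chi(\epsilon)\in\Gamma^{<}$ would be nonzero, giving $-\chi(\epsilon)\in\Gamma^{>}$ with $-\chi(\epsilon)\geq\epsilon$ and therefore $[\epsilon]\leq[-\chi(\epsilon)]$; but Lemma~\ref{functionfacts}(\ref{contractionproperty}) forces $[-\chi(\epsilon)]=[\chi(\epsilon)]<[\epsilon]$, a contradiction.

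For ($\Rightarrow$), I plan to prove the contrapositive: assuming $S$ has the yardstick property and $S^{\downarrow}\neq\Gamma^{<}$, show $S$ is not jammed. Since $S^{\downarrow}$ and $\Gamma^{<}$ are downward closed in the totally ordered group $\Gamma$, they are comparable, so the hypothesis splits into two cases: either (A) $S$ contains a positive element $\gamma_{*}$ (if $S$ meets $\Gamma^{\geq 0}$, the no-greatest assumption forces this), or (B) $S\subseteq\Gamma^{<}$ is bounded above by some $\gamma_{*}<0$. The key claim is that every $\bar\gamma_0\in S$ past a suitable threshold satisfies $|\bar\gamma_0|\geq|\gamma_{*}|>0$: in Case~(A) for $\bar\gamma_0>\gamma_{*}$, and in Case~(B) automatically for every $\bar\gamma_0\in S$. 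Applying (AC2) and axiom (HC) then yields $\psi(\bar\gamma_0)\leq\psi(\gamma_{*})$, and strict monotonicity of $\int$ from ADH~\ref{derivativestrictlyincreasing} gives $\chi(\bar\gamma_0)\leq\chi(\gamma_{*})<0$, hence $[\chi(\bar\gamma_0)]\geq[\chi(\gamma_{*})]$.

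To finish, I will consider the convex subgroup $\Delta:=\{\gamma\in\Gamma:[\gamma]<[\chi(\gamma_{*})]\}$, which is nontrivial because $\chi(\chi(\gamma_{*}))\in\Delta\setminus\{0\}$ by Lemma~\ref{functionfacts}(\ref{contractionproperty}). If $S$ were jammed, applying the definition to this $\Delta$ produces $\gamma_0\in S$ with $\gamma_1-\gamma_0\in\Delta$ for every $\gamma_1\in S^{>\gamma_0}$; since replacing $\gamma_0$ by any larger element of $S$ preserves this property, I may assume $\gamma_0$ exceeds both the yardstick bound $\beta$ and (in Case~A) $\gamma_{*}$. The yardstick then furnishes $\gamma_0-\chi(\gamma_0)\in S^{>\gamma_0}$, forcing $-\chi(\gamma_0)\in\Delta$, which contradicts $[\chi(\gamma_0)]\geq[\chi(\gamma_{*})]$. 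The main obstacle is the uniform setup of $\gamma_{*}$ and $\Delta$ so that (HC) bounds $\chi(\bar\gamma_0)$ from below in archimedean class across both cases; once that bookkeeping is in place, the yardstick-versus-jammed contradiction is immediate.
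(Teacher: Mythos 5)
Your proof is correct and follows essentially the same route as the paper's: the same two-case split on whether $S$ meets $\Gamma^{>}$ or is bounded above by a negative element, the same use of (HC) and monotonicity of $\int$ to bound $[\chi(\gamma_0)]$ from below by $[\chi(\gamma_{*})]$, and the same choice of a nontrivial convex subgroup below $[\chi(\gamma_{*})]$ to contradict jammedness. Your explicit verification in the ($\Leftarrow$) direction that $\Gamma^{>}$ has no least element (via $[\chi(\epsilon)]<[\epsilon]$) is a detail the paper leaves implicit, but it is the intended justification.
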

\begin{proof}
If $S = \Gamma^{<}$, then $S$ is jammed. Now suppose that $S\neq\Gamma^{<}$. We must show that $S$ is not jammed. In the first case, suppose $S\cap\Gamma^{>}\neq\emptyset$ and take $\gamma\in S\cap\Gamma^{>}$. Let $\Delta$ be a nontrivial convex subgroup of $\Gamma$ such that $[\Delta]<[\chi(\gamma)]$. Now let $\gamma_0,\gamma_1\in S$ be such that $\gamma<\gamma_0<\gamma_0-\chi(\gamma_0)<\gamma_1$. Note that
\[
\gamma_1-\gamma_0\ >\ -\chi(\gamma_0) 
\ \geq\ -\chi(\gamma)
\ >\  \Delta
\]
and we conclude that $S$ is not jammed since $\gamma_0>\gamma$ was arbitrary.

Next, suppose there is $\beta$ such that $S<\beta<0$. Let $\Delta$ be a nontrivial convex subgroup of $\Gamma$ such that $[\beta]>[\chi(\beta)]>[\Delta]$. Let $\gamma\in S$ be arbitrary. Then $\gamma-\chi(\gamma)\in S$. Note that
\[
(\gamma-\chi(\gamma))-\gamma\ =\ -\chi(\gamma) 
\ \geq\ -\chi(\beta) 
\ >\ \Delta.
\]
We conclude that $S$ is not jammed since $\gamma$ was arbitrary.
\end{proof}

\noindent
The following technical variant of the yardstick property will come in handy in sections \S\ref{smallexpintsection}, \ref{smallintsection}, and~\ref{bigintsection}:

\begin{definition}
Let $S\subseteq\Gamma$ be a nonempty convex set without a greatest element such that either $S\subseteq (\Gamma^{>})'$ or $S\subseteq (\Gamma^{<})'$. We say that $S$ has the \textbf{derived yardstick property} if there is $\beta\in S$ such that for every $\gamma\in S^{>\beta}$,
\[
\gamma-\textstyle\int s\gamma\in S^{>\beta}.
\]
\end{definition}

\begin{prop}
\label{derivedyardstickproperty}
Suppose $S\subseteq\Gamma$ is a nonempty convex set without a greatest element such that either $S\subseteq (\Gamma^{>})'$ or $S\subseteq (\Gamma^{<})'$ and $S$ has the derived yardstick property. Then $\int S:= \{\int s:s\in S\}\subseteq\Gamma$ is nonempty, convex, does not have a greatest element, and has the yardstick property.
\end{prop}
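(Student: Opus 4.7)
The plan is to transfer structure from $S$ to $\int S$ via the integration map $\int\colon \Gamma \to \Gamma^{\neq}$, which, being the inverse of the strictly increasing map $\gamma\mapsto\gamma'$ from ADH~\ref{derivativestrictlyincreasing}, is itself strictly monotone. The sign hypothesis that $S\subseteq (\Gamma^{>})'$ or $S\subseteq(\Gamma^{<})'$ guarantees $\int S\subseteq \Gamma^{>}$ or $\int S\subseteq \Gamma^{<}$ respectively, so in particular $\int S$ stays away from $0$. Nonemptiness of $\int S$ is immediate, and the absence of a greatest element is immediate from strict monotonicity of $\int$. For convexity, suppose $\int s_0 < \delta < \int s_1$ with $s_0,s_1\in S$; the sign hypothesis forces $\delta\in\Gamma^{\neq}$, so $\delta'$ is defined, and strict monotonicity of the derivative gives $s_0<\delta'<s_1$. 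By convexity of $S$ we get $\delta'\in S$, whence $\delta=\int\delta'\in\int S$.

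For the yardstick property, I would let $\beta\in S$ be a witness to the derived yardstick property of $S$ and take $\beta^*:=\int\beta\in\int S$ as the candidate witness for $\int S$. Given any $\delta\in(\int S)^{>\beta^*}$, write $\delta=\int\eta$ for some $\eta\in S$; by strict monotonicity $\eta=\delta'>\beta$, so $\eta\in S^{>\beta}$. The derived yardstick property then yields $\eta-\int s\eta\in S$. Now Calculation~\ref{yardstickcalculation}, applied with $\delta$ in the role of $\gamma$ (note $\delta\neq 0$), gives
\[
\int\bigl(\eta-\textstyle\int s\eta\bigr)\ =\ \int\bigl(\delta'-\textstyle\int s\delta'\bigr)\ =\ \delta-\chi(\delta).
\]
Hence $\delta-\chi(\delta)\in\int S$, which is exactly the yardstick property for $\int S$ with witness $\beta^*$.

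The substantive step is recognizing that Calculation~\ref{yardstickcalculation} is precisely the bridge linking the "derived" yardstick datum $\eta-\int s\eta\in S$ to the yardstick conclusion $\delta-\chi(\delta)\in\int S$; once this identification is made, the proof amounts to chaining together the monotonicity of $\int$, the sign hypothesis, and Calculation~\ref{yardstickcalculation}. The only genuine pitfall I anticipate is keeping track of the fact that $\delta$ must be nonzero in order for $\delta'$ (and the application of Calculation~\ref{yardstickcalculation}) to make sense, and this is exactly what the "$S\subseteq(\Gamma^{>})'$ or $S\subseteq(\Gamma^{<})'$" hypothesis is designed to enforce.
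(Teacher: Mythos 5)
Your proposal is correct and follows essentially the same route as the paper: deduce nonemptiness, convexity, and the absence of a greatest element from the strict monotonicity of $\gamma\mapsto\gamma'$ (ADH~\ref{derivativestrictlyincreasing}), then transport the derived yardstick witness $\beta$ to $\int\beta$ and apply Calculation~\ref{yardstickcalculation} to convert $\eta-\int s\eta\in S$ into $\delta-\chi(\delta)\in\int S$. Your explicit attention to the sign hypothesis guaranteeing $\delta\neq 0$ is a correct and welcome elaboration of a point the paper leaves implicit.
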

\begin{proof}
By ADH~\ref{derivativestrictlyincreasing}, $\int S$ is nonempty, convex, and does not have a greatest element. Let $\beta\in S$ be such that for every $\gamma\in S^{>\beta}$, $\gamma-\int s\gamma\in S$. Now take $\gamma\in (\int S)^{>\int\beta}$. Then $\gamma'\in S^{>\beta}$, so $\gamma'-\int s\gamma'\in S^{>\beta}$. Thus
\[
\textstyle\int(\gamma'-\textstyle\int s\gamma')\in (\textstyle\int S)^{>\textstyle\int\beta}.
\]
By Calculation~\ref{yardstickcalculation},
\[
\gamma-\chi(\gamma)\in (\textstyle\int S)^{>\textstyle\int\beta}.
\]
We conclude that $\int S$ has the yardstick property.
\end{proof}

\begin{example}(The yardstick property in $(\Gamma_{\log},\psi)$)
To get a feel for what the yardstick property says, suppose $S\subseteq\Gamma_{\log}$ is nonempty, downward closed, and has the yardstick property. Then, given an element $\alpha\neq 0$ in $S$ we may write
\[
\alpha = (\underbrace{0,\ldots,0}_n,r_n,r_{n+1},\ldots)
\]
and then the yardstick property says that the following larger element is also in $S$:
\[
\alpha-\chi(\alpha) = (\underbrace{0,\ldots,0}_n,r_n,r_{n+1}) - (\underbrace{0,\ldots,0}_{n+1},-1,0,0,\ldots) = (\underbrace{0,\ldots,0}_n,r_n,r_{n+1}+1,\ldots)\in S
\]
In fact, by iterating the yardstick property, we find that for \emph{any} $m$, the following element is in $S$:
\[
(\underbrace{0,\ldots,0}_n,r_n,r_{n+1}+m,\ldots)\in S
\]
Thus, if $\Delta$ is the convex subgroup generated by $-\chi(\alpha)$, then it follows that $\alpha+\Delta\subseteq S$.
\end{example}

\section{Valued fields}
\label{ValuedFields}

\noindent
\emph{In this section $K$ is a valued field}. 
Let $\mathcal{O}_K$ denote its valuation ring, $\smallo_K$ the maximal ideal of $\mathcal{O}_K$, $v:K^{\times}\to\Gamma_K:=v(K^{\times})$ its valuation with value group $\Gamma_K$, and
$\res:\mathcal{O}_K\to \boldsymbol{k}_K:=\mathcal{O}_K/\smallo_K$ its residue map with residue field $\boldsymbol{k}_K$, which we may also denote as $\res(K)$. We will suppress the subscript $K$ when the valued field $K$ is clear from context.
By convention we extend $v$ to a map $v:K\to\Gamma_{\infty}$ by setting $v(0) := \infty$.

\medskip\noindent
Given $f,g\in K$ we have the following relations:
\begin{align*}
f\preccurlyeq g\ &:\Longleftrightarrow\ vf\geq vg \quad(\text{$f$ is \textbf{dominated} by $g$})\\
f\prec g\ &:\Longleftrightarrow\ vf>vg \quad(\text{$f$ is \textbf{strictly dominated} by $g$}) \\
f\asymp g\ &:\Longleftrightarrow\ vf=vg \quad(\text{$f$ is \textbf{asymptotic} to $g$})
\end{align*}
For $f,g\in K^{\times}$, we have the additional relation:
\begin{align*}
f\sim g\ &:\Longleftrightarrow\ v(f-g)>vf \quad(\text{$f$ and $g$ are \textbf{equivalent}})
\end{align*}
Both $\asymp$ and $\sim$ are equivalence relations on $K$ and $K^{\times}$ respectively. We shall also use the following notation:
\begin{align*}
K^{\prec1}\ :\Longleftrightarrow&\ \{f\in K: f\prec 1\} = \smallo_K\\
K^{\preccurlyeq1}\ :\Longleftrightarrow&\ \{f\in K: f\preccurlyeq 1\} = \mathcal{O}_K \\
K^{\succ 1}\ :\Longleftrightarrow&\ \{f\in K: f\succ 1\} = K\setminus\mathcal{O}_K 
\end{align*}

\subsection{Pseudocauchy sequences and a Kaplansky lemma}

Let $(a_{\rho})$ be a well-indexed sequence in $K$, and $a\in K$. Then $(a_{\rho})$ is said to \textbf{pseudoconverge to $a$} (written: $a_{\rho}\leadsto a$) if for some index $\rho_0$ we have $a-a_{\sigma}\prec a-a_{\rho}$ whenever $\sigma>\rho>\rho_0$. In this case we also say that $a$ \textbf{is a pseudolimit of $(a_{\rho})$}.
We say that $(a_{\rho})$ is a \textbf{pseudocauchy sequence in $K$} (or \textbf{pc-sequence in $K$}) if for some index $\rho_0$ we have
\[
\tau>\sigma>\rho>\rho_0\ \Longrightarrow\ a_{\tau}-a_{\sigma}\prec a_{\sigma}-a_{\rho}.
\]
If $a_{\rho}\leadsto a$, then $(a_{\rho})$ is necessarily a pc-sequence in $K$. A pc-sequence $(a_{\rho})$ is \textbf{divergent in $K$} if $(a_{\rho})$ does not have a pseudolimit in $K$.

\medskip\noindent
Suppose that $(a_{\rho})$ is a pc-sequence in $K$ and that there is $a\in K$ such that $a_{\rho}\leadsto a$. Also let $\gamma_{\rho}:= v(a-a_{\rho})\in\Gamma_{\infty}$, which is eventually a strictly increasing sequence in $\Gamma$. Recall \emph{Kaplansky's Lemma}:

\begin{ADH}\cite[Prop. 3.2.1]{adamtt}
\label{KaplanskyLemma}
Suppose $P\in K[X]\setminus K$. Then $P(a_{\rho})\leadsto P(a)$. Furthermore, there are $\alpha\in\Gamma$ and $i\geq 1$ such that eventually $v(P(a_{\rho})-P(a)) = \alpha+i\gamma_{\rho}$.
\end{ADH}

\noindent
Note that ADH~\ref{KaplanskyLemma} concerns \emph{polynomials} $P\in K[X]$. Below we give a version for rational functions. First a few remarks.

\medskip\noindent
Roughly speaking, we think of the eventual nature of the sequence $(\gamma_{\rho})$ as a ``rate of convergence'' for the pseudoconvergence $a_{\rho}\leadsto a$. ADH~\ref{KaplanskyLemma} tells us that the rate of convergence for $P(a_{\rho})\leadsto P(a)$ is very similar to that of $a_{\rho}\leadsto a$. Indeed, $(\alpha+i\gamma_{\rho})$ is just an affine transform of $(\gamma_{\rho})$ in $\Gamma$. We want to show that applying rational functions to $(a_{\rho})$ will also have this property. Before we can do this, we need to recall a few more facts from valuation theory.

\medskip\noindent
Suppose that $(a_{\rho})$ is a pc-sequence in $K$. A main consequence of ADH~\ref{KaplanskyLemma} is that $(a_{\rho})$ falls into one of two categories:

\begin{enumerate}
\item $(a_{\rho})$ is of \textbf{algebraic type over $K$} if for \emph{some} nonconstant $P\in K[X]$, $v(P(a_{\rho}))$ is eventually strictly increasing (equivalently, $P(a_{\rho})\leadsto 0$).
\item $(a_{\rho})$ is of \textbf{transcendental type over $K$} if for \emph{all} nonconstant $P\in K[X]$, $v(P(a_{\rho}))$ is eventually constant (equivalently, $P(a_{\rho})\not\leadsto 0$).
\end{enumerate}

\noindent
If $(a_{\rho})$ is a pc-sequence of transcendental type over $K$, then $(a_{\rho})$ is divergent in $K$; moreover, if $a_{\rho}\leadsto b$ for some $b$ in a valued field extension of $K$, then $b$ will necessarily be transcendental over $K$.

\medskip\noindent
Now suppose that $(a_{\rho})$ is a pc-sequence in $K$. Take $\rho_0$ as in the definition of ``pseudocauchy sequence'' and define $\sigma_{\rho}:=v(a_{\rho'}-a_{\rho})\in\Gamma$ for $\rho'>\rho>\rho_0$; this depends only on $\rho$ and the sequence $(\sigma_{\rho})_{\rho>\rho_0}$ is strictly increasing. We define the \textbf{width} of $(a_{\rho})$ to be the following upward closed subset of $\Gamma_{\infty}$:
\[
\operatorname{width}(a_{\rho})\ =\ \{\sigma\in \Gamma_{\infty}: \text{$\sigma>\sigma_{\rho}$ for all $\rho>\rho_0$}\}
\]
The width of $(a_{\rho})$ is independent of the choice of $\rho_0$. The following follows from various results in~\cite[Chapters 2 and 3]{adamtt}:

\begin{ADH}
\label{widthlemma}
Let $(a_{\rho})$ be a divergent pc-sequence in $K$ and let $b$ be an element of a valued field extension of $K$ such that $a_{\rho}\leadsto b$. Then for $\gamma_{\rho}:=v(b-a_{\rho})\in\Gamma_{\infty}$, eventually $\gamma_{\rho} = \sigma_{\rho}$ and
\[
\operatorname{width}(a_{\rho})\ =\ \Gamma_{\infty}^{>v(b-K)} \quad\text{and}\quad v(b-K)\ =\ \Gamma_{\infty}^{<\operatorname{width}(a_{\rho})}
\]
where $v(b-K) = \{v(b-a):a\in K\}\subseteq\Gamma$.
\end{ADH}

\begin{remark}
Let $b$ be an element of an immediate valued field extension of $K$. If $b\not\in K$, then $v(b-K)\subseteq\Gamma$ is a nonempty downward closed subset of $\Gamma$ without a greatest element. We think of $v(b-K)$ as encoding how well elements from $K$ can approximate $b$. Below we will consider various qualitative properties of such a set $v(b-K)$ and consider what these properties say about the element $b$ itself.
\end{remark}

\noindent
Given pc-sequences $(a_{\rho})$ and $(b_{\sigma})$ in $K$, we say that $(a_{\rho})$ and $(b_{\sigma})$ are \textbf{equivalent} if they satisfy any of the following equivalent conditions:
\begin{enumerate}
\item $(a_{\rho})$ and $(b_{\sigma})$ have the same pseudolimits in every valued field extension of $K$;
\item $(a_{\rho})$ and $(b_{\sigma})$ have the same width, and have a common pseudolimit in some valued field extension of $K$;
\item there are arbitrarily large $\rho$ and $\sigma$ such that for all $\rho'>\rho$ and $\sigma'>\sigma$ we have $a_{\rho'}-b_{\sigma'}\prec a_{\rho'}-a_{\rho}$, and there are arbitrarily large $\rho$ and $\sigma$ such that for all $\rho'>\rho$ and $\sigma'>\sigma$ we have $a_{\rho'}-b_{\sigma'}\prec b_{\sigma'}-b_{\sigma}$.
\end{enumerate}
\noindent
See~\cite[2.2.17]{adamtt} for details of this equivalence.

\medskip\noindent
\emph{Now we assume that $L$ is an immediate extension of $K$, $a\in L\setminus K$, and $(a_{\rho})$ is a pc-sequence in $K$ of transcendental type over $K$ such that $a_{\rho}\leadsto a$.} 

\begin{lemma}
\label{RationalKaplanskyLemma}
Let $R(X)\in K(X)\setminus K$. Then there exists an index $\rho_0$ such that for $\rho>\rho_0$:
\begin{enumerate}
\item $R(a_{\rho})\in K$ (that is, $R(a_{\rho})\neq \infty$);
\item $R(a_{\rho})\leadsto R(a)$;
\item $v(R(a_{\rho})-R(a)) = \alpha+i\gamma_{\rho}$, eventually, for some $\alpha\in \Gamma$ and $i\geq 1$;
\item $(\alpha+i\gamma_{\rho})$ is eventually cofinal in $v(R(a)-K)$, with $\alpha$ and $i$ as in (2);
\item $(R(a_{\rho}))$ is a divergent pc-sequence in $K$; and
\item $v(R(a)-K) = (\alpha+iv(a-K))^{\downarrow}$, with $\alpha$ and $i$ as in (2).
\end{enumerate}
\end{lemma}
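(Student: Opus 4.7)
Since $(a_\rho)$ is of transcendental type over $K$ and pseudoconverges to $a \in L$, the element $a$ is transcendental over $K$; hence $P(a) \neq 0$ and $Q(a) \neq 0$ for every nonzero $P, Q \in K[X]$. Write $R = P/Q$ with $P, Q \in K[X]$ and $Q \neq 0$. Applying ADH~\ref{KaplanskyLemma} to $Q$ (trivial if $Q \in K^{\times}$) yields $v(Q(a_\rho) - Q(a)) \to \infty$ eventually, so $v(Q(a_\rho)) = v(Q(a))$ and $Q(a_\rho) \neq 0$ eventually. This gives (1).

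For (2) and (3), the main move is to pass to $L[X]$ and introduce the auxiliary polynomial
\[
H(X)\ :=\ P(X)Q(a) - P(a)Q(X)\ \in\ L[X].
\]
Since $R \notin K$, $H$ is nonzero in $L[X]$, and $H(a) = 0$; factor $H(X) = (X - a)^{i} G(X)$ with $G \in L[X]$, $G(a) \neq 0$, and $i \geq 1$. Then
\[
R(a_\rho) - R(a)\ =\ \frac{H(a_\rho)}{Q(a_\rho) Q(a)}\ =\ \frac{(a_\rho - a)^{i}\, G(a_\rho)}{Q(a_\rho) Q(a)}.
\]
Regarding $(a_\rho)$ as a pc-sequence in $L$ with pseudolimit $a$, the Taylor-expansion argument underlying ADH~\ref{KaplanskyLemma} applies to $G \in L[X]$ and gives $v(G(a_\rho) - G(a)) \to \infty$ eventually (trivially if $G \in L^{\times}$), hence $v(G(a_\rho)) = v(G(a))$ eventually. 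Combining with $v(Q(a_\rho)) = v(Q(a))$, we conclude
\[
v(R(a_\rho) - R(a))\ =\ \alpha + i\gamma_\rho \qquad \text{eventually},
\]
with $\alpha := v(G(a)) - 2v(Q(a)) \in \Gamma$, yielding (3); item (2) follows since the right-hand side is eventually strictly increasing.

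For the remaining items: (5) is a quick contradiction---if $R(a_\rho) \leadsto c$ for some $c \in K$, then $v(R(a_\rho) - c) \to \infty$ together with (2) forces $R(a) = c \in K$ via the identity $R(a) - c = (R(a) - R(a_\rho)) + (R(a_\rho) - c)$, contradicting that $R(a) \notin K$ (because $a$ is transcendental and $R$ is nonconstant). For (4), for any $c \in K$ with $c \neq R(a)$, $v(R(a) - c)$ is a fixed element of $\Gamma$, so from the same identity and $v(R(a) - R(a_\rho)) = \alpha + i\gamma_\rho \to \infty$ we get $v(R(a) - c) < \alpha + i\gamma_\rho$ eventually; hence $(\alpha + i\gamma_\rho)$ is cofinal in $v(R(a) - K)$. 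For (6), the inclusion $v(R(a) - K) \subseteq (\alpha + iv(a - K))^{\downarrow}$ is immediate from (4) (noting $\gamma_\rho = v(a - a_\rho) \in v(a - K)$), while $\supseteq$ uses that $v(R(a) - K)$ is downward closed (as $R(a) \notin K$) and $\{\gamma_\rho\}$ is cofinal in $v(a - K)$: for any $\beta \in v(a - K)$, $\beta \leq \gamma_\rho$ for some $\rho$, so $\alpha + i\beta \leq \alpha + i\gamma_\rho \in v(R(a) - K)$.

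The main obstacle is the potential cancellation in the numerator $P(a_\rho)Q(a) - P(a)Q(a_\rho)$ if one applies ADH~\ref{KaplanskyLemma} separately to $P$ and $Q$: in the ``resonant'' subcase where both polynomials produce the same eventual affine form, the naive bound is not tight and tracking leading cancellations becomes delicate. Factoring in $L[X]$ bypasses this case analysis entirely, letting the exponent $i$ be read off as the multiplicity of $a$ as a root of $H(X) \in L[X]$.
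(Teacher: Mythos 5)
Your overall strategy --- introducing the auxiliary polynomial $H(X)=P(X)Q(a)-P(a)Q(X)$, which vanishes at $a$, and reducing to Kaplansky's lemma --- is the same as the paper's (the paper calls it $S\in K(a)[X]$ and applies ADH~\ref{KaplanskyLemma} to it directly over $K(a)$). But your execution of the key step has a genuine gap. ADH~\ref{KaplanskyLemma} does \emph{not} give $v(G(a_{\rho})-G(a))\to\infty$; it only gives that this sequence is \emph{eventually strictly increasing}, and an eventually strictly increasing sequence of the form $\beta+j\gamma_{\rho}$ need never surpass the fixed value $v(G(a))$, since $(\gamma_{\rho})$ is only cofinal in $v(a-K)$, which may be bounded above in $\Gamma$. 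For a nonconstant $Q\in K[X]$ one can still conclude that $v(Q(a_{\rho}))$ is eventually constant, but only by invoking the hypothesis that $(a_{\rho})$ is of transcendental type \emph{over $K$} --- which is exactly what the paper does. That escape route is unavailable for your $G\in L[X]$: the sequence $(a_{\rho})$ is of \emph{algebraic} type over $L$ and over $K(a)$ (witness $X-a$), so nothing forces $v(G(a_{\rho}))$ to be eventually constant, and your identification of the exponent $i$ with the multiplicity of $a$ as a root of $H$ is unjustified. (In characteristic zero that multiplicity is always $1$, since $H'(a)=Q(a)^2R'(a)\neq 0$, so you would in effect be asserting $i=1$ always --- a claim neither the lemma nor ADH~\ref{KaplanskyLemma} makes.) The repair is simply to apply ADH~\ref{KaplanskyLemma} to $H$ as a nonconstant polynomial over $K(a)$ (whose value group equals $\Gamma$ because $L\supseteq K(a)$ is immediate over $K$): this yields $v(H(a_{\rho}))=v(H(a_{\rho})-H(a))=\alpha'+i\gamma_{\rho}$ eventually for \emph{some} $i\geq 1$, with no need to locate $i$.

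The same conflation of ``eventually strictly increasing'' with ``tends to $\infty$'' undermines your arguments for (4) and (5). For (5): a pc-sequence can have many pseudolimits (any two differ by an element whose valuation lies in the width), so $R(a_{\rho})\leadsto c$ together with $R(a_{\rho})\leadsto R(a)$ does \emph{not} force $c=R(a)$. The correct argument is the paper's: a pseudolimit $b\in K$ would make $v(P(a_{\rho})-bQ(a_{\rho}))=v(R(a_{\rho})-b)+v(Q(a_{\rho}))$ eventually strictly increasing, exhibiting $(a_{\rho})$ as being of algebraic type over $K$ via the polynomial $P-bQ$. For (4): the eventual cofinality of $(\alpha+i\gamma_{\rho})$ in $v(R(a)-K)$ does not come from any divergence to $\infty$; it follows from (5) together with ADH~\ref{widthlemma} applied to the divergent pc-sequence $(R(a_{\rho}))$ with pseudolimit $R(a)$. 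Your deduction of (6) from (4) and downward closure is fine once (4) is correctly established.
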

\begin{proof}
Let $R(X) = P(X)/Q(X)$ for some $P,Q\in K[X]^{\neq}$. It is clear there exists $\rho_0$ such that $R(a_{\rho})\in K$ for all $\rho>\rho_0$. Fix such a $\rho_0$ and assume $\rho>\rho_0$ for the rest of this proof.

We first consider the case that $R(X) = P(X)\in K[X]\setminus K$ is a polynomial. Then (2) and (3) follow from ADH~\ref{KaplanskyLemma}. We will prove (5) and then (4) and (6) will follow. Assume towards a contradiction that there is $b\in K$ such that $R(a_{\rho})\leadsto b$. Then $R(a_{\rho})-b\leadsto 0$. This shows that $(a_{\rho})$ is of algebraic type since $R(X)-b\in K[X]\setminus K$ is a nonconstant polynomial. This contradicts the assumption that $(a_{\rho})$ is a pc-sequence of transcendental type.

Next consider the case that $R(X)\in K(X)\setminus K[X]$. In particular, $Q(X)\in K[X]\setminus K$ and $Q\nmid P$. Then note that
\begin{align*}
v\left(\frac{P(a_{\rho})}{Q(a_{\rho})}-\frac{P(a)}{Q(a)}\right)\ &=\ v\left(\frac{P(a_{\rho})Q(a)-P(a)Q(a_{\rho})}{Q(a_{\rho})Q(a)}\right) \\
&=\ v(P(a_{\rho})Q(a) - P(a)Q(a_{\rho})) - v(Q(a_{\rho})) - v(Q(a)).
\end{align*}
The quantity $v(Q(a_{\rho}))$ is eventually constant since $(a_{\rho})$ is of transcendental type. Next, set $S(X):= P(X)Q(a)-P(a)Q(X)\in K(a)[X]$. Note that eventually $S(a_{\rho})\neq 0$ and thus $S\neq 0$ (otherwise, the polynomial $Q(X)-(Q/P)(a)P(X)$ would be identically zero since it would have infinitely many distinct zeros, which would imply $Q\mid P$). Furthermore, $S(a)=0$, which shows that $S\in K(a)[X]\setminus K(a)$ is nonconstant. By ADH~\ref{KaplanskyLemma}, it follows that $S(a_{\rho})\leadsto S(a) = 0$. In particular, $v(S(a_{\rho}))$ is eventually strictly increasing and there are $\alpha\in\Gamma$ and $i\geq 1$ such that eventually $v(S(a_{\rho})) = \alpha+i\gamma_{\rho}$. This shows (2) and (3).

Finally, we will prove (5) and then (4) and (6) will follow.  Assume towards a contradiction that $R(a_{\rho})\leadsto b$ for some $b\in K$. Then
\[
v\left(\frac{P(a_{\rho})}{Q(a_{\rho})}-b\right)\ =\ v(P(a_{\rho})-bQ(a_{\rho})) - v(Q(a_{\rho}))
\]
is eventually strictly increasing. Since $v(Q(a_{\rho}))$ is eventually constant, this implies that $v(P(a_{\rho})-bQ(a_{\rho}))$ is eventually strictly increasing. This shows that $(a_{\rho})$ is of algebraic type, a contradiction.
\end{proof}

\section{Differential fields, differential-valued fields and $H$-fields}
\label{DFDVFHF}

\subsection{Differential fields}

A \textbf{differential field} is a field $K$ of characteristic zero, equipped with a derivation $\der$ on $K$, i.e., an additive map $\der:K\to K$ which satisfies the Leibniz identity: $\der(ab) = \der(a)b+a\der(b)$ for all $a,b\in K$. For such $K$ we identify $\Q$ with a subfield of $K$ in the usual way.

\medskip\noindent
Let $K$ be a differential field. For $a\in K$, we will often denote $a':=\der(a)$, and for $a\in K^{\times}$ we will denote the \textbf{logarithmic derivative} of $a$ as $a^{\dagger}:= a'/a = \der(a)/a$. For $a,b\in K^{\times}$, note that $(ab)^{\dagger} = a^{\dagger}+b^{\dagger}$, in particular, $(a^{k})^{\dagger} = ka^{\dagger}$ for $k\in \Z$. The set $\{a\in K:a'=0\}\subseteq K$ is a subfield of $K$ and is called the \textbf{field of constants} of $K$, and denoted by $C_K$ (or just $C$ if $K$ is clear from the context). If $c\in C$, then $(ca)' = ca'$ for $a\in K$. If $a,b\in K^{\times}$, then $a^{\dagger} = b^{\dagger}$ iff $a=bc$ for some $c\in C^{\times}$.

\medskip\noindent
The following is routine:

\begin{lemma}
\label{diffeqlemma}
Let $K$ be a differential field. Suppose that $y_0,y_1,\ell\in K$ are such that $y_0,y_1\not\in C$ and $y_i'' = \ell y_i'$ for $i=0,1$. Then there are $c_0,c_1\in C$ such that $c_0\neq 0$ and $y_1 = c_0y_0+c_1$.
\end{lemma}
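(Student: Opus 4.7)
The plan is to exploit the standard fact, already recalled in the excerpt, that two nonzero elements of $K^\times$ have the same logarithmic derivative if and only if they differ by a nonzero constant factor. The hypothesis $y_i'' = \ell y_i'$ is precisely the statement that the logarithmic derivative of $y_i'$ equals $\ell$, once we know $y_i' \neq 0$, which is guaranteed by $y_i \notin C$.

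So first I would observe that $y_0', y_1' \in K^\times$, and rewrite $y_i'' = \ell y_i'$ as $(y_i')^\dagger = \ell$. Thus $(y_1')^\dagger = (y_0')^\dagger$, and applying the cited fact about logarithmic derivatives yields $c_0 \in C^\times$ with $y_1' = c_0 y_0'$.

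Next I would form $z := y_1 - c_0 y_0 \in K$. Since $c_0 \in C$, the derivation acts on $c_0 y_0$ as $(c_0 y_0)' = c_0 y_0'$, so $z' = y_1' - c_0 y_0' = 0$. Hence $z = c_1 \in C$, and $y_1 = c_0 y_0 + c_1$ with $c_0 \neq 0$, as required.

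There is no real obstacle here: the whole argument is a two-line consequence of the logarithmic-derivative characterization of constant multiples, applied to $y_0'$ and $y_1'$ instead of to $y_0$ and $y_1$ directly. The only subtlety worth flagging is ensuring $y_i' \neq 0$ before invoking the characterization, which is exactly why the hypothesis $y_i \notin C$ is included.
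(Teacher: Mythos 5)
Your proof is correct and is exactly the routine argument the paper has in mind (the paper states the lemma without proof, calling it routine, and the facts you invoke --- that $a^{\dagger}=b^{\dagger}$ iff $a=bc$ for some $c\in C^{\times}$, and that elements with zero derivative are constants --- are precisely the ones recalled just before the lemma). The point about needing $y_i'\neq 0$, guaranteed by $y_i\notin C$, is the right subtlety to flag.
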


\noindent
In this paper we will primarily be concerned with algebraic extensions and simple transcendental extensions of differential fields. In these cases, the following are relevant:

\begin{ADH}\cite[1.9.2]{adamtt}
\label{algextdifffield}
Suppose $K$ is a differential field and $L$ is an algebraic extension of the field $K$. Then $\der$ extends uniquely to a derivation on $L$.
\end{ADH}

\begin{ADH}\cite[1.9.4]{adamtt}
\label{ADHCor1.9.4}
Suppose $K$ is a differential field with field extension $L=K(x)$ where $x = (x_i)_{i\in I}$ is a family in $L$ that is algebraically independent over $K$. Then there is for each family $(y_i)_{i\in I}$ in $L$ a unique extension of $\der$ to a derivation on $L$ with $\der(x_i) = y_i$ for all $i\in I$.
\end{ADH}

\noindent
If $K$ is a differential field and $s\in K\setminus\der(K)$, then ADH~\ref{ADHCor1.9.4} allows us to \emph{adjoin an integral for $s$}: let $K(x)$ be a field extension of $K$ such that $x$ is transcendental over $K$. Then by ADH~\ref{ADHCor1.9.4} there is a unique derivation on $K(x)$ extending $\der$ such that $x' = s$. Likewise, if $s\in K\setminus (K^{\times})^{\dagger}$, then we can \emph{adjoin an exponential integral for $s$}: take $K(x)$ as before and by ADH~\ref{ADHCor1.9.4} there is a unique derivation on $K(x)$ extending $\der$ such that $x' = sx$, and thus $x^{\dagger} = s$. Adjoining integrals and exponential integrals are basic examples of \emph{Liouville extensions}:

\medskip\noindent
A \textbf{Liouville extension} of $K$ is a differential field extension $L$ of $K$ such that $C_L$ is algebraic over $C$ and for each $a\in L$ there are $t_1,\ldots,t_n\in L$ with $a\in K(t_1,\ldots,t_n)$ and for $i=1,\ldots,n$,
\begin{enumerate}
\item $t_i$ is algebraic over $K(t_1,\ldots,t_{i-1})$, or
\item $t_i'\in K(t_1,\ldots,t_{i-1})$, or
\item $t_i\neq 0$ and $t_i^{\dagger}\in K(t_1,\ldots,t_{i-1})$.
\end{enumerate}

\subsection{Valued differential fields}

A \textbf{valued differential field} is a differential field $K$ equipped with a valuation ring $\mathcal{O}\supseteq \Q$ of $K$. In particular, all valued differential fields  have $\operatorname{char}\boldsymbol{k} = 0$.

\medskip\noindent
An \textbf{asymptotic differential field}, or just \textbf{asymptotic field}, is a valued differential field $K$ such that for all $f,g\in K^{\times}$ with $f,g\prec 1$,
\begin{itemize}
\item[(A)] $f\prec g\ \Longleftrightarrow\ f'\prec g'$.
\end{itemize}
\noindent
If $K$ is an asymptotic field, then $C\subseteq\mathcal{O}$ and thus $v(C^{\times}) = \{0\}$. The following consequence of Lemma~\ref{diffeqlemma} will be used in \S\ref{LiouvilleClosures} to obtain the main result of this paper:

\begin{lemma}
\label{asympdiffeqlemma}
Let $K$ be an asymptotic field. Suppose that $y_0,y_1,\ell\in K$ are such that $y_0,y_1\not\in C$ and $y_i'' = \ell y_i'$ for $i=0,1$. Then $y_0\succ 1$ iff $y_1\succ 1$.
\end{lemma}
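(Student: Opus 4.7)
The plan is to apply the preceding Lemma~\ref{diffeqlemma} to reduce the asymptotic statement to a purely algebraic identity between $y_0$ and $y_1$, and then exploit the defining property of asymptotic fields that $v(C^{\times}) = \{0\}$.

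First I would invoke Lemma~\ref{diffeqlemma}: since $K$ is in particular a differential field and the hypotheses $y_0, y_1 \notin C$ and $y_i'' = \ell y_i'$ are exactly those of that lemma, we obtain constants $c_0, c_1 \in C$ with $c_0 \neq 0$ such that $y_1 = c_0 y_0 + c_1$. This turns the problem into a pure valuation-theoretic question: show $y_0 \succ 1 \iff c_0 y_0 + c_1 \succ 1$.

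Next I would use the fact, noted in the paragraph defining asymptotic fields, that $C \subseteq \mathcal{O}$, so $v(C^{\times}) = \{0\}$. Consequently $c_0 \asymp 1$ (since $c_0, c_0^{-1} \in C^{\times}$) and $c_1 \preccurlyeq 1$. Now the argument splits into the two easy directions. If $y_0 \succ 1$, then $c_0 y_0 \asymp y_0 \succ 1 \succcurlyeq c_1$, so the sum $y_1 = c_0 y_0 + c_1 \asymp c_0 y_0 \succ 1$. Conversely, if $y_0 \preccurlyeq 1$, then $c_0 y_0 \preccurlyeq 1$ and $c_1 \preccurlyeq 1$, whence $y_1 \preccurlyeq 1$; contrapositively, $y_1 \succ 1$ forces $y_0 \succ 1$.

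There is no real obstacle here: once Lemma~\ref{diffeqlemma} is in hand, the statement is essentially a one-line consequence of $C \subseteq \mathcal{O}$. The only subtlety worth being careful about is handling the additive term $c_1$ in the case $y_0 \succ 1$, which requires observing that $c_1 \preccurlyeq 1 \prec c_0 y_0$ so that $c_0 y_0 + c_1$ remains asymptotic to $c_0 y_0$; this uses the standard ultrametric fact that sums are dominated by the largest summand when there is no cancellation at top valuation.
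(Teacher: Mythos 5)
Your proof is correct and matches the paper's intent exactly: the paper states this lemma as a "consequence of Lemma~\ref{diffeqlemma}" and omits the details, which are precisely your reduction $y_1 = c_0 y_0 + c_1$ followed by the observation that $C \subseteq \mathcal{O}$ forces $c_0 \asymp 1$ and $c_1 \preccurlyeq 1$. The ultrametric case analysis is handled correctly in both directions.
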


\noindent
The value group of an asymptotic field always has a natural asymptotic couple structure associated to it:
\begin{ADH}\cite[9.1.3]{adamtt}
\label{asympfieldhaveAC}
Let $K$ be a valued differential field. The following are equivalent:
\begin{enumerate}
\item $K$ is an asymptotic field;
\item there is an asymptotic couple $(\Gamma,\psi)$ with underlying ordered abelian group $\Gamma = v(K^{\times})$ such that for all $g\in K^{\times}$ with $g\not\asymp 1$ we have $\psi(vg) = v(g^{\dagger})$.
\end{enumerate}
\end{ADH}
\noindent
If $K$ is an asymptotic field, we call $(\Gamma,\psi)$ as defined in ADH~\ref{asympfieldhaveAC},(2), the \textbf{asymptotic couple of $K$}. 

\begin{convention}
If $L$ is an expansion of an asymptotic field, and $P$ is a property that an asymptotic couple may or may not have, then when we say ``$L$ has property $P$'', this is defined to mean ``the asymptotic couple of $L$ has property $P$''. For instance, when we say $L$ is ``of \textbf{$H$-type}'', equivalently ``is \textbf{$H$-asymptotic}'', we mean that the asymptotic couple $(\Gamma_L,\psi_L)$ of $L$ is $H$-type. Likewise for properties ``asymptotic integration'', ``grounded'', etc.
\end{convention}

\medskip\noindent
We say that an asymptotic field $K$ is \textbf{pre-differential-valued}, or \textbf{pre-$\d$-valued}, if the following holds:
\begin{itemize}
\item[(PDV)] for all $f,g\in K^{\times}$, if $f\preccurlyeq 1$, $g\prec 1$, then $f'\prec g^{\dagger}$.
\end{itemize}
\noindent
Every ungrounded asymptotic field is pre-$\d$-valued by~\cite[10.1.3]{adamtt}.

\medskip\noindent
Finally, we say that a pre-$\d$-valued field $K$ is \textbf{differential-valued}, or \textbf{$\d$-valued}, if it satisfies one of the following three equivalent conditions:
\begin{enumerate}
\item $\mathcal{O} = C+\smallo$;
\item $\{\operatorname{res}(a):a\in C\} = \boldsymbol{k}$;
\item for all $f\asymp 1$ in $K$ there exists $c\in C$ with $f\sim c$.
\end{enumerate}

\noindent
Suppose $K$ is a pre-$\d$-valued field of $H$-type. Define the $\mathcal{O}$-submodule
\[
\I(K)\ :=\ \{y\in K:\text{$y\preccurlyeq f'$ for some $f\in\mathcal{O}$}\}
\]
of $K$. We say that $K$ has \textbf{small exponential integration} if $\I(K)= (1+\smallo)^{\dagger}$, has \textbf{small integration} if $\I(K)= \der \smallo$, has \textbf{exponential integration} if $K= (K^{\times})^{\dagger}$, and has \textbf{integration} if $K=\der K$.

\begin{lemma}
\label{predsmallintdv}
Let $K$ be a pre-$\d$-valued field of $H$-type with small integration. Then $K$ is $\d$-valued.
\end{lemma}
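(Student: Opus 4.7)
The plan is to verify the third characterization of $\d$-valuedness from the excerpt: given any $f \in K$ with $f \asymp 1$, produce some $c \in C$ with $f \sim c$. The entire argument is going to be a one-line application of the small integration hypothesis after observing that $f' \in \I(K)$.

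First I would fix $f \asymp 1$, so $f \in \mathcal{O}$. Taking the distinguished element in the definition of $\I(K)$ to be $f$ itself, we have $f' \preccurlyeq f'$ with $f \in \mathcal{O}$, so $f' \in \I(K)$. Now the small integration assumption $\I(K) = \der\smallo$ gives some $g \in \smallo$ with $g' = f'$, i.e.\ $(f-g)' = 0$. Set $c := f - g$, so $c \in C$.

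Since $g \in \smallo$ and $f \asymp 1$, we have $vc = v(f - g) = \min(vf, vg) = 0$, so $c \asymp 1$; and $v(f - c) = v(g) > 0 = vf$, which is precisely $f \sim c$. This establishes condition (3) and hence $\d$-valuedness.

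There is no real obstacle: the hypothesis $H$-type enters only through the fact that $\I(K)$ is defined (and is an $\mathcal{O}$-submodule) in that setting, while the pre-$\d$-valued hypothesis ensures $C \subseteq \mathcal{O}$, which is needed so that the element $c = f - g$ produced above sits in $\mathcal{O}$ and can legitimately play the role demanded by characterization (3).
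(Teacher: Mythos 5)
Your proof is correct and is essentially identical to the paper's: both take $f\asymp 1$, observe $f'\in\I(K)=\der\smallo$, write $f'=\epsilon'$ with $\epsilon\in\smallo$, and conclude $f-\epsilon=c\in C^{\times}$ with $f\sim c$, verifying characterization (3) of $\d$-valuedness.
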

\begin{proof}
Take $f\in K$ such that $f\asymp 1$. Then $f'\in \I(K) = \der\smallo$ so there is $\epsilon\in\smallo$ such that $f' = \epsilon'$. Thus $f-\epsilon = c$ for some $c\in C^{\times}$ and thus $f\sim c$.
\end{proof}

\subsection{Ordered valued differential fields} A \textbf{pre-$H$-field} is an ordered pre-$\d$-valued field $K$ of $H$-type whose ordering, valuation, and derivation interact as follows:
\begin{itemize}
\item[(PH1)] the valuation ring $\mathcal{O}$ is convex with respect to the ordering;
\item[(PH2)] for all $f\in K$, if $f>\mathcal{O}$, then $f'>0$.
\end{itemize}
\medskip\noindent
An \textbf{$H$-field} is a pre-$H$-field $K$ that is also $\d$-valued. Any ordered differential field with the trivial valuation is a pre-$H$-field.

\begin{example}
\label{Hhulltranscendental}
Consider the field $L=\R(x)$ equipped with the unique derivation which has constant field $\R$ and $x'=1$. Furthermore, equip $L$ with the trivial valuation and the unique field ordering determined by requiring $x>\R$.
It follows that $L$ is pre-$H$-field with residue field isomorphic to $\R(x)$. 
However, $L$ is not an $H$-field. Indeed, the residue field is not even algebraic over the image of the constant field $\R$ under the residue map.
\end{example}

\begin{example}
\label{arctanexample}
Consider the Hardy field $\Q$. Using~\cite[Theorem 2]{RosenlichtHardy} twice, we can extend to the Hardy field $\Q(x)$ where $x'=1$, and further extend to the Hardy field $K = \Q(x,\arctan(x))$ where $(\arctan(x))' = 1/(1+x^2)$. Each of these three Hardy fields are pre-$H$-fields (see~\cite[\S 10.5]{adamtt}), however $\Q$ and $\Q(x)$ are $H$-fields whereas $K$ is \emph{not} an $H$-field: the constant field of $K$ is $\Q$ whereas the residue field of $K$ is $\Q(\pi)$. Note that in this example the residue field $\Q(\pi)$ is also not algebraic over the image of the constant field $\Q$. For details of these Hardy field extensions and justification of the claims about $K$, see the table and discussion below:

\medskip

\begin{center}
  \begin{tabular}{ |cc| cc| cc| cl| c|}
    \hline
	\multicolumn{2}{|c}{Hardy field} & \multicolumn{2}{|c}{Value group} & \multicolumn{2}{|c}{Residue field} & \multicolumn{2}{|c|}{Constant field} & $H$-field? \\ \hline
	 &$\Q$ &&\{0\} &&$\Q$ &&$\Q$ & Yes \\ \hline
	 & $\Q(x)$ && $\Z v(x)$ && $\Q$ && $\Q$ & Yes \\ \hline
	 & $K = \Q(x,\arctan(x))$ &(I)& $\Z v(x)$ &(I)& $\Q(\pi)$ &(II)& $\Q$ & No \\ \hline
  \end{tabular}
\end{center}

\medskip

\begin{enumerate}[(I)]
\item Note that $\lim_{x\to\infty}\arctan(x) = \pi/2$, hence $\arctan(x)\preccurlyeq1$ and the residue field $\res(K)$ of $K$ contains $\Q(\pi)$. Recall that the Lindemann-Weierstrass theorem~\cite{Lindemann}, $\pi$ is transcendental over $\Q$, so $\res(\arctan(x)) = \pi/2$ is transcendental over $\res(\Q(x)) = \Q$. It follows that $\arctan(x)$ is transcendental over $\Q(x)$ (otherwise $\res(K)$ would be algebraic over $\res(\Q(x)) = \Q$). Thus by~\cite[3.1.31]{adamtt}, it follows that $\Gamma_K = \Gamma_{\Q(x)} = \Z v(x)$, and
\[
\res(K) = \res(\Q(x))(\res(\arctan(x))) = \Q(\pi/2) = \Q(\pi).
\]
\item As $K$ is a pre-$H$-field, it follows that the constant field is necessarily a subfield of the residue field $\Q(\pi)$. A routine brute force verification shows that $1/(1+x^2)\not\in \der(\Q(x))$. Thus the differential ring $\Q(x)[\arctan(x)]$ is simple by~\cite[4.6.10]{adamtt} (see~\cite{adamtt} for definitions of \emph{differential ring} and \emph{simple differential ring}). Furthermore, as $\Q(x)[\arctan(x)]$ is finitely generated as a $\Q(x)$-algebra, it follows that $C_K$ is algebraic over $\Q$ by~\cite[4.6.12]{adamtt}. 
However, $\Q$ is algebraically closed in $\Q(\pi)$ (because $\pi$ is transcendental over $\Q$) and so $C_K = \Q$.
\end{enumerate}

\end{example}

\subsection{Algebraic extensions} \emph{In this subsection, let $K$ be an asymptotic field.} We fix an algebraic field extension $L$ of $K$. By ADH~\ref{algextdifffield} we equip $L$ with the unique derivation extending the derivation $\der$ of $K$. By \emph{Chevalley's Extension Theorem}~\cite[3.1.15]{adamtt} we equip $L$ with a valuation extending the valuation of $K$. Thus $L$ is a valued differential field extension of $K$. We record here several properties that are preserved in this algebraic extension:

\begin{ADH}\label{algextasymptoticfield}
The valued differential field $L$ is an asymptotic field~\cite[9.5.3]{adamtt}. Also:
\begin{enumerate}
\item If $K$ is of $H$-type, then so is $L$.
\item If $K$ is pre-$\d$-valued, then so is $L$~\cite[10.1.22]{adamtt}.
\item $K$ is grounded iff $L$ is grounded.
\end{enumerate}
\end{ADH}

\noindent
(1) and (3) of ADH~\ref{algextasymptoticfield} follow from the corresponding facts about the divisible hull of an asymptotic couple; see Definition~\ref{divisiblehulldef}.

\medskip\noindent
Furthermore, assume that $K$ is equipped with an ordering making it a pre-$H$-field, and $L|K$ is an algebraic extension of ordered differential fields.

\begin{ADH}
There is a unique convex valuation ring of $L$ extending the valuation ring of $K$~\cite[3.5.18]{adamtt}. Equipped with this valuation ring, $L$ is a pre-$H$-field extension of $K$~\cite[10.5.4]{adamtt}. Furthermore, if $K$ is an $H$-field and $L = K^{\text{rc}}$, a real closure of $K$, then $L$ is also an $H$-field~\cite[10.5.6]{adamtt}.
\end{ADH}

\section{$\upl$-freeness}\label{uplfreenesssection}
\noindent
\emph{In this section assume that $K$ is an ungrounded $H$-asymptotic field with $\Gamma\neq\{0\}$.}

\subsection{Logarithmic sequences and $\upl$-sequences}

\begin{definition}
A \textbf{logarithmic sequence (in $K$)} is a well-indexed sequence $(\ell_{\rho})$ in $K^{\succ 1}$ such that
\begin{enumerate}
\item $\ell_{\rho+1}'\asymp \ell_{\rho}^{\dagger}$, i.e., $v(\ell_{\rho+1}) = \chi(v\ell_{\rho})$, for all $\rho$;
\item $\ell_{\rho'}\prec \ell_{\rho}$ whenever $\rho'>\rho$;
\item $(\ell_{\rho})$ is coinitial in $K^{\succ 1}$: for each $f\in K^{\succ 1}$ there is an index $\rho$ with $\ell_{\rho}\preccurlyeq f$.
\end{enumerate}
\end{definition}
\noindent
Such sequences exist and can be constructed by transfinite recursion.

\begin{definition}
A \textbf{$\upl$-sequence (in $K$)} is a sequence of the form $(\upl_{\rho}) = (-(\ell_{\rho}^{\dagger\dagger}))$ where $(\ell_{\rho})$ is a logarithmic sequence in $K$.
\end{definition}

\begin{ADH}\cite[11.5.2]{adamtt}
\label{uplseqwidth}
Every $\upl$-sequence is a pc-sequence of width $\{\gamma\in\Gamma_{\infty}:\gamma>\Psi\}$.
\end{ADH}

\begin{ADH}\cite[11.5.3]{adamtt}
Every two $\upl$-sequences are equivalent as pc-sequences.
\end{ADH}

\medskip\noindent
\emph{For the rest of this section we will fix in $K$ a distinguished logarithmic sequence $(\ell_{\rho})$ along with its corresponding $\upl$-sequence $(\upl_{\rho})$}. Nothing that we will discuss depends on the choice of this $\upl$-sequence.

\subsection{$\upl$-freeness}

\begin{ADH}\cite[11.6.1]{adamtt}
\label{uplfreeprop}
The following conditions on $K$ are equivalent:
\begin{enumerate}
\item $(\upl_{\rho})$ has no pseudolimit in $K$;
\item for all $s\in K$ there is $g\in K^{\succ 1}$ such that $s-g^{\dagger\dagger}\succcurlyeq g^{\dagger}$.
\end{enumerate}
\end{ADH}

\begin{definition}
If $L$ is an $H$-asymptotic field, we say that $L$ is \textbf{$\upl$-free} (or has \textbf{$\upl$-freeness}) if it is ungrounded with $\Gamma_{L}\neq\{0\}$, and it satisfies condition (2) in ADH~\ref{uplfreeprop}.
\end{definition}

\noindent
The following is immediate from the definition of $\upl$-freeness and is a remark made after~\cite[11.6.4]{adamtt}:
\begin{ADH}
\label{uplfreegoingdown}
Suppose $L$ is an $H$-asymptotic extension of $K$ such that $\Psi$ is cofinal in $\Psi_L$. If $L$ is $\upl$-free, then so is $K$. 
\end{ADH}

\begin{ADH}\cite[11.6.4]{adamtt}
\label{uplfreedirecteduniongrounded}
If $K$ is a directed union of grounded asymptotic subfields, then $K$ is $\upl$-free.
\end{ADH}

\begin{lemma}
\label{uplfreedirectedunion}
If $K$ is a directed union of $\upl$-free $H$-asymptotic subfields, then $K$ is $\upl$-free.
\end{lemma}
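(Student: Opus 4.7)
The plan is to reduce the statement directly to the equivalent characterization of $\upl$-freeness given in ADH~\ref{uplfreeprop}(2): for every $s \in K$, there exists $g \in K^{\succ 1}$ with $s - g^{\dagger\dagger} \succcurlyeq g^{\dagger}$. Since the section's running hypothesis already provides that $K$ is an ungrounded $H$-asymptotic field with $\Gamma_K \neq \{0\}$, producing such a $g$ for every $s$ is the only thing that remains to be checked.

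Given $s \in K$, invoke directedness of the family $(K_i)$ and the equality $K = \bigcup_i K_i$ to fix a single index $i$ with $s \in K_i$. Since $K_i$ is $\upl$-free, by definition it is an ungrounded $H$-asymptotic field with $\Gamma_{K_i} \neq \{0\}$ satisfying condition (2) of ADH~\ref{uplfreeprop}. Applying that condition inside $K_i$ yields an element $g \in K_i^{\succ 1}$ with $s - g^{\dagger\dagger} \succcurlyeq g^{\dagger}$, where the dominance relation is computed with the valuation of $K_i$.

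The final step is to transfer this witness from $K_i$ to $K$. Because $K_i$ is an $H$-asymptotic \emph{subfield} of $K$, the valuation and derivation on $K$ restrict to those on $K_i$; hence the logarithmic derivative $g^{\dagger}$ and the iterated logarithmic derivative $g^{\dagger\dagger}$ are unambiguous, and the relations $\succ 1$ and $\succcurlyeq$ are absolute between the two fields. Consequently $g \in K^{\succ 1}$ and the inequality $s - g^{\dagger\dagger} \succcurlyeq g^{\dagger}$ is valid in $K$, which verifies condition (2) of ADH~\ref{uplfreeprop} for $K$ and so shows that $K$ is $\upl$-free.

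There is no real obstacle here; the argument is a routine glueing argument. The only thing one must be careful about is that directedness is essential: it guarantees that a single $K_i$ already contains the test element $s$, so that one can extract the required $g$ from a single $\upl$-free subfield rather than having to combine witnesses across several of them. An alternative but essentially equivalent route would be to fix a $\upl$-sequence $(\upl_\rho)$ in $K$ and suppose, towards a contradiction, that it has a pseudolimit $a \in K$; one would locate a $K_i$ containing (a cofinal tail of) the sequence together with $a$ and derive a pseudolimit inside $K_i$, contradicting its $\upl$-freeness via ADH~\ref{uplfreeprop}(1). I prefer the first route since it avoids having to track the indexing of the $\upl$-sequence across the subfields.
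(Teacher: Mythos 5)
Your proof is correct and is exactly the argument the paper intends: the paper's own proof is the one-line remark that the lemma ``follows easily from the ADH~\ref{uplfreeprop}(2) characterization of $\upl$-freeness,'' and your write-up simply supplies the routine details (locating $s$ in a single $K_i$ by directedness, extracting the witness $g$ there, and noting that the dominance relations are absolute between $K_i$ and $K$).
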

\begin{proof}
This follows easily from the ADH~\ref{uplfreeprop}(2) characterization of $\upl$-freeness.
\end{proof}

\subsection{Algebraic extensions} Ultimately, we will show that $\upl$-freeness is preserved under arbitrary Liouville extensions of $H$-fields. For the time being, we have the following results concerning $\upl$-freeness for algebraic extensions:

\begin{ADH}\cite[11.6.7]{adamtt}
\label{uplfreehens}
If $K$ is $\upl$-free, then so is its henselization $K^{\text{h}}$.
\end{ADH}

\begin{ADH}\cite[11.6.8]{adamtt}
\label{uplfreeacl}
$K$ is $\upl$-free iff the algebraic closure $K^{\text{a}}$ of $K$ is $\upl$-free.
\end{ADH}

\begin{lemma}
\label{uplfreerc}
Suppose $K$ is equipped with an ordering making it a pre-$H$-field. If $K$ is $\upl$-free, then so is its real closure $K^{\text{rc}}$.
\end{lemma}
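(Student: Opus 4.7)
The plan is to sandwich $K^{\text{rc}}$ between $K$ and the algebraic closure $K^{\text{a}}$, use ADH~\ref{uplfreeacl} to pass $\upl$-freeness all the way up, and then come back down to $K^{\text{rc}}$ via ADH~\ref{uplfreegoingdown}. The only thing to verify is that the hypothesis of ADH~\ref{uplfreegoingdown}, namely that $\Psi_{K^{\text{rc}}}$ is cofinal in $\Psi_{K^{\text{a}}}$, actually holds.

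Concretely, I would first observe that $K^{\text{a}}$ is an algebraic extension of the pre-$H$-field $K$, hence by the results of \S\ref{DFDVFHF} on algebraic extensions it is again $H$-asymptotic and pre-$\d$-valued. Since $K$ is $\upl$-free, ADH~\ref{uplfreeacl} gives that $K^{\text{a}}$ is $\upl$-free. Similarly $K^{\text{rc}}$ is an $H$-asymptotic (in fact pre-$H$-field) extension of $K$, and $K^{\text{a}}$ is an algebraic, hence $H$-asymptotic, extension of $K^{\text{rc}}$.

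Next I would check that $\Gamma_{K^{\text{a}}} = \Gamma_{K^{\text{rc}}}$. The key point is that $K^{\text{rc}}$ is real closed, so every positive element has an $n$-th root for every $n\geq 1$; together with the convexity of the valuation ring of $K^{\text{rc}}$, this forces $\Gamma_{K^{\text{rc}}}$ to be divisible. Since $K^{\text{a}}$ is algebraic over $K^{\text{rc}}$, the value group $\Gamma_{K^{\text{a}}}$ lies in the divisible hull of $\Gamma_{K^{\text{rc}}}$, which equals $\Gamma_{K^{\text{rc}}}$ itself. Hence $\Gamma_{K^{\text{a}}} = \Gamma_{K^{\text{rc}}}$, and because the asymptotic couple of an asymptotic field determines $\psi$ on its value group uniquely (or equivalently by Definition~\ref{divisiblehulldef}(1) applied to the asymptotic couple of $K^{\text{rc}}$ sitting inside that of $K^{\text{a}}$), we get
\[
\Psi_{K^{\text{a}}}\ =\ \psi(\Gamma_{K^{\text{a}}}^{\neq})\ =\ \psi(\Gamma_{K^{\text{rc}}}^{\neq})\ =\ \Psi_{K^{\text{rc}}}.
\]
In particular $\Psi_{K^{\text{rc}}}$ is cofinal in $\Psi_{K^{\text{a}}}$, trivially so.

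Finally, applying ADH~\ref{uplfreegoingdown} to the extension $K^{\text{rc}}\subseteq K^{\text{a}}$ yields that $K^{\text{rc}}$ is $\upl$-free, as desired. I do not foresee a real obstacle here; the only spot worth pausing on is confirming that $\Gamma_{K^{\text{rc}}}$ is indeed divisible, which is a standard consequence of $K^{\text{rc}}$ being real closed together with convexity of its valuation ring (guaranteed for the canonical pre-$H$-field structure on $K^{\text{rc}}$ by the results cited in \S\ref{DFDVFHF}).
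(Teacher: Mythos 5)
Your proposal is correct and follows essentially the same route as the paper: pass up to $K^{\text{a}}$ via ADH~\ref{uplfreeacl}, then come back down to $K^{\text{rc}}$ via ADH~\ref{uplfreegoingdown} using the invariance of the $\Psi$-set under these algebraic extensions (the paper phrases this as $\Psi_{K^{\text{rc}}}=\Psi$, you phrase it as $\Psi_{K^{\text{a}}}=\Psi_{K^{\text{rc}}}$ via divisibility of $\Gamma_{K^{\text{rc}}}$; these amount to the same fact from Definition~\ref{divisiblehulldef}(1)). No gaps.
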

\begin{proof}
This follows from ADH~\ref{uplfreeacl} and then ADH~\ref{uplfreegoingdown}, using the fact that $\Psi_{K^{\text{rc}}} = \Psi$.
\end{proof}

\subsection{Big exponential integration} The ``big'' exponential integral extensions considered here complement the Liouville extensions considered in \S\ref{smallexpintsection}, \S\ref{smallintsection}, and \S\ref{bigintsection} below.  In particular, we fix an element $s\in K$ that does not have an exponential integral in $K$, i.e., $s\not\in (K^{\times})^{\dagger}$, and we assume that $s$ is \emph{bounded away} from the logarithmic derivatives in $K$ in the sense that
\[
S:= \{v(s-a^{\dagger}):a\in K^{\times}\}\subseteq \Psi^{\downarrow}.
\]
Then under the following circumstances, $\upl$-freeness is preserved when adjoining an exponential integral for such an $s$:

\begin{ADH}\cite[11.6.12]{adamtt}
\label{ADH11.6.12}
Suppose $K$ is $\upl$-free and $\Gamma$ is divisible, and let $f^{\dagger} = s$, where $f\neq 0$ lies in an $H$-asymptotic field extension of $K$. Suppose that
\begin{enumerate}
\item $S$ does not have a largest element, or
\item $S$ has a largest element and $[\gamma+vf]\not\in[\Gamma]$ for some $\gamma\in\Gamma$.
\end{enumerate}
Then $K(f)$ is $\upl$-free.
\end{ADH}

\begin{ADH}\cite[10.5.20 and 11.6.13]{adamtt}
\label{uplfreebigexpint}
Suppose $K$ is equipped with an ordering making it a real closed $H$-field such that $s<0$. Let $L = K(f)$ be a field extension of $K$ such that $f$ is transcendental over $K$, equipped with the unique derivation extending the derivation of $K$ such that $f^{\dagger} = s$. Then there is a unique pair consisting of a valuation of $L = K(f)$ and a field ordering on $L$ making it a pre-$H$-field extension of $K$ with $f>0$. With this valuation and ordering $L$ is an $H$-field and $\Psi$ is cofinal in $\Psi_L$. Furthermore, if $K$ is $\upl$-free, then so is $L$.
\end{ADH}

\subsection{Gap creators} Let $s\in K$. We say that $s$ \textbf{creates a gap over $K$} if $vf$ is a gap in $K(f)$, for some element $f\neq 0$ in some $H$-asymptotic field extension of $K$ with $f^{\dagger} = s$.

\begin{ADH}\cite[11.6.1 and 11.6.8]{adamtt}
\label{nogapcreator}
If $K$ is $\upl$-free, then $K$ has rational asymptotic integration, and no element of $K$ creates a gap over $K$.
\end{ADH}

\begin{remark}
ADH~\ref{nogapcreator} suggests that one way to view $\upl$-freeness is as a \emph{gap prevention property}. How good is $\upl$-freeness as a gap prevention property? 
Already the above results show that it is impossible to create a gap from algebraic extensions and certain exponential integral extensions of a $\upl$-free field.
However, we can do a little bit better than that: by our results Propositions~\ref{lambdafreesmallexpint},~\ref{lambdafreesmallint}, and~\ref{lambdafreebigint} below, it follows that $\upl$-freeness is also safely preserved (and so gaps are prevented) when passing to much more general Liouville extensions of a $\upl$-free field.
\end{remark}

\noindent
On the other hand, \emph{not} being $\upl$-free does not bode well for preventing a gap:

\begin{ADH}
\label{existsgapcreator}
Suppose $K$ has asymptotic integration, $\Gamma$ is divisible, and $\upl_{\rho}\leadsto\upl\in K$. Then $s=-\upl$ creates a gap over $K$. Furthermore, for every $H$-asymptotic extension $K(f)$ of $K$ such that $f^{\dagger} = s$, $vf$ is a gap in $K(f)$.
\end{ADH}
\begin{proof}
The first claim is~\cite[11.5.14]{adamtt} and the second claim is a remark after~\cite[11.5.14]{adamtt}.
\end{proof}

\noindent
The following will be our main method of producing gaps in Liouville extensions of $H$-fields in \S\ref{LiouvilleClosures} below:

\begin{ADH}
\label{gapcreatorlemma}
Suppose that $K$ is equipped with an ordering making it a real closed $H$-field with asymptotic integration, and $\upl_{\rho}\leadsto\upl\in K$. Let $L = K(f)$ be a field extension of $K$ with $f$ transcendental over $K$ equipped with the unique derivation extending the derivation of $K$ such that $f^{\dagger} = -\upl$. Then there is a unique pair consisting of a valuation of $L$ and a field ordering on $L$ making it an $H$-field extension of $K$ with $f>0$. With this valuation and ordering, $vf$ is a gap in $L$.
\end{ADH}
\begin{proof}
By~\cite[11.5.13]{adamtt} we can apply~\cite[10.5.20]{adamtt} with either $-\upl$ or $\upl$ playing the role of $s$, whichever one is negative. Either way, a positive exponential integral $f$ of $-\upl$ will be adjoined, as it is the reciprocal of a positive exponential integral of $\upl$. Also $L = K(f)$. By ADH~\ref{existsgapcreator}, $vf$ is a gap in $L$.
\end{proof}

\subsection{The yardstick argument}

Assume that $L = K(y)$ is an immediate $H$-asymptotic extension of $K$ where $y$ is transcendental over $K$. In particular, $v(y-K)$ is a nonempty downward closed subset of $\Gamma$ without a greatest element.

\begin{prop}
\label{yardstickprop}
Assume $K$ is henselian and $\upl$-free, and $v(y-K)\subseteq\Gamma$ has the yardstick property. Then $L=K(y)$ is $\upl$-free.
\end{prop}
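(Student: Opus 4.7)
The plan is to argue by contradiction: assume $L = K(y)$ is not $\upl$-free. Since $L|K$ is immediate, the value groups and residue fields agree, so the distinguished logarithmic sequence $(\ell_{\rho})$ of $K$ is still a logarithmic sequence in $L$, and thus $(\upl_{\rho})$ is a $\upl$-sequence of $L$. By ADH~\ref{uplfreeprop} and the $\upl$-freeness of $K$, $(\upl_{\rho})$ has no pseudolimit in $K$; but by the contradiction hypothesis some $b \in L \setminus K$ satisfies $\upl_{\rho} \leadsto b$. I write $b = R(y)$ with $R \in K(X) \setminus K$, and note that since $y$ is transcendental over $K$ and $L|K$ is immediate, there is a pc-sequence $(a_{\rho})$ in $K$ of transcendental type over $K$ with $a_{\rho} \leadsto y$, placing us in the hypothesis of Lemma~\ref{RationalKaplanskyLemma}.

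Next I extract the shape of $v(b-K)$ in two different ways. On the one hand, ADH~\ref{uplseqwidth} tells us the width of $(\upl_{\rho})$ equals $\{\gamma \in \Gamma_{\infty} : \gamma > \Psi\}$, and then ADH~\ref{widthlemma} applied to $(\upl_{\rho})$ and its pseudolimit $b$ yields $v(b-K) = \Psi^{\downarrow}$. On the other hand, Lemma~\ref{RationalKaplanskyLemma}(6) gives $v(b-K) = v(R(y)-K) = (\alpha + i \cdot v(y - K))^{\downarrow}$ for some $\alpha \in \Gamma$ and $i \geq 1$. Combining these yields the key equality $\Psi^{\downarrow} = (\alpha + i \cdot v(y-K))^{\downarrow}$.

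Now I invoke the jammed/yardstick dichotomy. By Lemma~\ref{Psijammed}, $\Psi^{\downarrow}$ is jammed; Lemma~\ref{downwardjammed} then forces $\alpha + i \cdot v(y-K)$ to be jammed, and Lemma~\ref{jammedtranslates} in turn forces $v(y-K)$ itself to be jammed. Combined with the hypothesized yardstick property on $v(y-K)$, Lemma~\ref{jammedyardstick} concludes $v(y-K)^{\downarrow} = \Gamma^{<}$, and since $v(y-K)$ is automatically downward closed, $v(y-K) = \Gamma^{<}$.

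The main obstacle I anticipate is refuting this residual case, which Lemma~\ref{jammedyardstick} does not exclude on its own. The key fact is that $\upl$-freeness of $K$ entails rational asymptotic integration by ADH~\ref{nogapcreator}, so in the divisible hull $(\Q\Gamma,\psi)$ one has asymptotic integration with $\Q\Gamma \neq \{0\}$; since $\Q\Gamma$ is densely ordered, $((\Q\Gamma)^{>})'$ has no least element, whence $\Psi^{\downarrow}$ has no supremum in $\Q\Gamma$. On the other hand, $\Gamma^{<}$ has supremum $0$ in $\Q\Gamma$, so by Lemma~\ref{suptranslates} the set $(\alpha + i \cdot \Gamma^{<})^{\downarrow}$ has supremum $\alpha$ in $\Q\Gamma$. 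This contradicts the equality $\Psi^{\downarrow} = (\alpha + i \cdot v(y-K))^{\downarrow}$ established above, and the proof is complete.
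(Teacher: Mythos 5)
Your proof follows essentially the same route as the paper's: identify $v(\upl-K)=\Psi^{\downarrow}$ via the width computation, rewrite it as $(\alpha+i\,v(y-K))^{\downarrow}$ via the rational Kaplansky lemma, play jammedness against the yardstick property to force $v(y-K)=\Gamma^{<}$, and dispose of that residual case with the supremum argument in $\Q\Gamma$. The one flaw is your justification for being in the hypotheses of Lemma~\ref{RationalKaplanskyLemma}: the existence of a pc-sequence in $K$ of \emph{transcendental type} pseudoconverging to $y$ does not follow from the transcendence of $y$ over $K$ together with immediacy of $L|K$ --- a divergent pc-sequence of algebraic type can perfectly well have transcendental pseudolimits. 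What is actually needed is the henselianity of $K$: in equicharacteristic zero, henselian implies algebraically maximal, so $K$ admits no divergent pc-sequence of algebraic type, and hence any divergent pc-sequence in $K$ pseudoconverging to $y$ is automatically of transcendental type. This is exactly where the henselian hypothesis of the proposition enters (the paper cites it at this step); your argument never otherwise uses it, so you should repair the attribution rather than drop the hypothesis.
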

\begin{proof}
Assume towards a contradiction that $L$ is not $\upl$-free. Take $\upl\in L\setminus K$ such that $\upl_{\rho}\leadsto \upl$. 
By ADH~\ref{uplseqwidth}, ADH~\ref{widthlemma}, and Lemma~\ref{Psijammed},  $v(\upl-K) = \Psi^{\downarrow}$ is jammed. 
Furthermore, $v(\upl-K)$ does not have a supremum in $\Q\Gamma$ because $K$ is $\upl$-free and hence has rational asymptotic integration. 
By the henselian assumption and Lemma~\ref{RationalKaplanskyLemma}, there are $\alpha\in\Gamma$ and $n\geq 1$ such that $v(\upl-K) = (\alpha+nv(y-K))^{\downarrow}$. 
Thus by Lemmas~\ref{downwardjammed} and~\ref{jammedtranslates}, $v(y-K)$ is jammed as well. 
Since $v(y-K)$ also has the yardstick property, by Lemma~\ref{jammedyardstick} it follows that $v(y-K) = \Gamma^{<}$. 
However, since $v(\upl-K)$ does not have a supremum in $\Q\Gamma$,  by Lemma~\ref{suptranslates}, neither does $v(y-K)$, a contradiction.
\end{proof}

\section{Small exponential integration}
\label{smallexpintsection}

\noindent
\emph{In this section we suppose that $K$ is a henselian pre-$\d$-valued field of $H$-type and we fix an element $s\in K\setminus (K^{\times})^{\dagger}$  such that $v(s)\in(\Gamma^{>})'$.}
In particular, $K$ \emph{does not} have small exponential integration.
Take a field extension $L=K(y)$ with $y$ transcendental over $K$, equipped with the unique derivation extending the derivation of $K$ such that $(1+y)^{\dagger} = y'/(1+y) = s$.

\begin{ADH}\cite[10.4.3 and 10.5.18]{adamtt}
There is a unique valuation of $L$ that makes it an $H$-asymptotic extension of $K$ with $y\not\asymp 1$. With this valuation $L$ is pre-$\d$-valued, and is an immediate extension of $K$ with $y\prec 1$. Furthermore, if $K$ is equipped with an ordering making it a pre-$H$-field, then there is a unique ordering on $L$ making it a pre-$H$-field extension of $K$. 
\end{ADH}

\medskip\noindent
For the rest of this section equip $L$ with this valuation. The main result of this section is the following:

\begin{prop}
\label{lambdafreesmallexpint}
If $K$ is $\upl$-free, then so is $L = K(y)$.
\end{prop}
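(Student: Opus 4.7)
The plan is to apply Proposition~\ref{yardstickprop} to the extension $L=K(y)$. By the preceding ADH result, $L$ is an immediate $H$-asymptotic extension of $K$ with $y$ transcendental over $K$, while $K$ is henselian and $\upl$-free by hypothesis; so the remaining task is to verify that $v(y-K)\subseteq\Gamma$ has the yardstick property.

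The first step is to relate approximations of $y$ in $K$ to approximations of $s$ by small logarithmic derivatives. For $a\in K^{\prec 1}$ with $v(y-a)>0$, the multiplicativity of the logarithmic derivative together with $(1+y)^\dagger=s$ gives
\[
s-(1+a)^\dagger\;=\;\left(\frac{1+y}{1+a}\right)^\dagger\;=\;(1+\epsilon)^\dagger,\qquad \epsilon\;:=\;\frac{y-a}{1+a}\prec 1,
\]
so using $1+a,\,1+\epsilon\asymp 1$ and the asymptotic couple formula,
$v(s-(1+a)^\dagger)=v(\epsilon')=v(y-a)+\psi(v(y-a))=v(y-a)'$ in the notation of \S\ref{AsymptoticCouples}. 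Setting $T:=\{v(s-(1+a)^\dagger):a\in K^{\prec 1},\ v(y-a)>0\}$, the bijection $\gamma\mapsto\gamma'$ between $\Gamma^{\neq}$ and $(\Gamma^{\neq})'$ (whose inverse is $\int$, by ADH~\ref{derivativestrictlyincreasing}) identifies $T$ with a convex subset of $(\Gamma^{>})'$ satisfying $\int T=v(y-K)\cap\Gamma^{>}$.

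Since the yardstick property is a condition on the top of the set, the yardstick property of $v(y-K)$ is equivalent to that of $v(y-K)\cap\Gamma^{>}=\int T$, so by Proposition~\ref{derivedyardstickproperty} it suffices to establish the derived yardstick property for $T$. Unpacking this via Calculation~\ref{yardstickcalculation} ($\int(\gamma'-\int s\gamma')=\gamma-\chi(\gamma)$), what is needed is: given $a\in K^{\prec 1}$ with $v(s-(1+a)^\dagger)=\gamma'\in T$ (so $\gamma=v(y-a)>0$), produce $a^{**}\in K^{\prec 1}$ with $v(s-(1+a^{**})^\dagger)\geq\gamma'-\chi(\gamma)$ or, equivalently, $v(y-a^{**})\geq\gamma-\chi(\gamma)$.

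The main obstacle, and the heart of the argument, is this refinement. I take $a^{**}=a+e$ for a suitable $e\in K$ with $v(e)=\gamma$; writing $\tilde{s}:=s-(1+a)^\dagger$ (so $v(\tilde{s})=\gamma'$), the computation
\[
(1+a+e)^\dagger-(1+a)^\dagger\;=\;\left(1+\tfrac{e}{1+a}\right)^\dagger\;\sim\;\bigl(\tfrac{e}{1+a}\bigr)'\;\sim\;e'
\]
shows that if $v(e'-\tilde{s})\geq\gamma'-\chi(\gamma)$, then $v(s-(1+a^{**})^\dagger)\geq\gamma'-\chi(\gamma)$ as required, so $e$ must be an approximate integral of $\tilde{s}$ in $K$. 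My plan for producing $e$ is to exploit consecutive differences of a pseudocauchy sequence $(a_\rho)$ in $K$ with $a_\rho\leadsto y$: writing $\tilde{s}_\rho:=s-(1+a_\rho)^\dagger$, the identity $(y-a_\rho)'=\tilde{s}_\rho(1+a_\rho)+s(y-a_\rho)$ combined with the bound $v(s)>\Psi$ (from $v(s)\in(\Gamma^{>})'$) makes the second summand negligible and yields $(a_{\rho+1}-a_\rho)'\sim\tilde{s}_\rho$ to leading order; refining the pc-sequence using that $v(y-K)$ is downward closed without a greatest element then boosts the improvement to at least $-\chi(\gamma)$, completing the derived yardstick property for $T$ and hence the proof.
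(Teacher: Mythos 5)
Your reduction is exactly the paper's: pass to Proposition~\ref{yardstickprop}, identify $v(y-K)^{>0}$ with $\textstyle\int T$ where $T=\{v(s-(1+a)^{\dagger}):a\in K^{\prec 1}\}$, and try to give $T$ the derived yardstick property so that Proposition~\ref{derivedyardstickproperty} applies. The gap is in the key lemma, and it occurs in two places. First, the inference from $v(e'-\tilde s)\geq\gamma'-\chi(\gamma)$ to $v(s-(1+a+e)^{\dagger})\geq\gamma'-\chi(\gamma)$ does not follow from your ``$\sim$'' computation: $\sim$ only guarantees errors of valuation strictly greater than $\gamma'$, whereas the whole point of the yardstick argument is that errors must exceed $\gamma'$ by the \emph{specific} amount $-\chi(\gamma)$. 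Concretely, $\bigl(\tfrac{e}{1+a}\bigr)'-e'$ contains the cross-term $-e'a/(1+a)$ of valuation $\gamma'+v(a)$, and for $\gamma>v(y)$ one has $v(a)=v(y)$ fixed while $-\chi(\gamma)$ need not stay below $v(y)$ as $\gamma$ climbs through $v(y-K)$ (it does not if $S$ reaches into higher archimedean classes); so this term can fall short of the target. The paper sidesteps the cross-term by perturbing multiplicatively, $1+a^{**}=(1+a)(1+ub)$, so that $(1+a^{**})^{\dagger}=(1+a)^{\dagger}+(1+ub)^{\dagger}$ exactly.

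Second, and more fundamentally, your production of $e$ is circular. Taking $e=a_{\rho+1}-a_{\rho}$ gives $e'-\tilde s_{\rho}=[(y-a_{\rho})'-\tilde s_{\rho}]-(y-a_{\rho+1})'$, and $v\bigl((y-a_{\rho+1})'\bigr)$ is merely the next element of $T$ above $\gamma'$: that it exceeds $\gamma'-\chi(\gamma)$ is precisely the statement being proved. ``Downward closed without a greatest element'' only yields \emph{some} improvement, not a measurable one; if $T$ were jammed, no amount of climbing by consecutive differences would cover the yardstick. The paper's device is that no approximate integral of $\tilde s$ is needed at all: since $v(\tilde s)=\gamma'\in(\Gamma^{>})'$ one solves $ub'=\tilde s$ \emph{exactly} with $b\prec 1$ and $u\asymp 1$, and then the only error terms in $s-((1+a)(1+ub))^{\dagger}=(u^2bb'-u'b)/(1+ub)$ are $u^2bb'$ and $u'b$, which are controlled by Lemma~\ref{ACyardstick} and by $v(u')>\Psi$ (a consequence of (PDV)), respectively. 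Without some such replacement for your pc-sequence step, the derived yardstick property for $T$ is not established.
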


\noindent
The proof of Proposition~\ref{lambdafreesmallexpint} is delayed until the end of the section. The following nonempty set will be of importance in our analysis:
\[
S\ :=\ \left\{v\left(s-\frac{\epsilon'}{1+\epsilon}\right):\epsilon\in K^{\prec 1}\right\}\ \subseteq\  (\Gamma^{>})'\ \subseteq\ \Gamma_{\infty}.
\]

\begin{ADH}
\label{Ssmallexpintnolargest}
The set $S$ does not have a largest element.
\end{ADH}
\begin{proof}
This is Claim 1 in the proof of~\cite[10.4.3]{adamtt}.
\end{proof}

\begin{lemma}
\label{SconvexSmallExpInt}
$S$ is a downward closed subset of $(\Gamma^{>})'$; in particular, $S$ is convex.
\end{lemma}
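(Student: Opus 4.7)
The strategy is to exploit the fact that $T := \{(1+\epsilon)^{\dagger} : \epsilon \in K^{\prec 1}\}$ is an additive subgroup of $K$ that realizes every value in $(\Gamma^{>})'$; since $S = v(s-T)$, this will let me translate any witness in $T$ for a given valuation into a witness for any smaller valuation in $(\Gamma^{>})'$.

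First I would verify that $T$ is closed under addition and subtraction. This is immediate from the logarithmic-derivative identities $(ab)^{\dagger} = a^{\dagger}+b^{\dagger}$ and $(a/b)^{\dagger} = a^{\dagger}-b^{\dagger}$, together with the observation that $1+\smallo_K$ is a multiplicative subgroup of $K^{\times}$: for $\epsilon_0,\epsilon_1\in K^{\prec 1}$ we have $(1+\epsilon_0)(1+\epsilon_1)=1+\epsilon_2$ and $(1+\epsilon_0)/(1+\epsilon_1)=1+\epsilon_3$ with $\epsilon_2,\epsilon_3\in\smallo_K$.

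Next I would show that $v(T)=(\Gamma^{>})'$. Given $\tau\in(\Gamma^{>})'$, write $\tau=\gamma'$ with $\gamma\in\Gamma^{>}$, pick any $b\in K$ with $v(b)=\gamma$, so $b\in\smallo_K$; then $(1+b)^{\dagger}\in T$ and $v((1+b)^{\dagger}) = v(b'/(1+b)) = v(b') = \gamma+\psi(\gamma) = \gamma' = \tau$, since $b\not\asymp 1$.

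With these two ingredients in hand, downward closedness is a one-line computation. Fix $\sigma\in S$, say $\sigma=v(s-t_0)$ with $t_0\in T$, and let $\tau\in(\Gamma^{>})'$ satisfy $\tau\leq\sigma$; the case $\tau=\sigma$ is trivial, so assume $\tau<\sigma$. Choose $t_1\in T$ with $v(t_1)=\tau$, and set $t:=t_0+t_1\in T$. Then $s-t=(s-t_0)-t_1$, and since $v(s-t_0)=\sigma>\tau=v(t_1)$ the ultrametric inequality forces $v(s-t)=\tau$, so $\tau\in S$. Finally, the ``in particular $S$ is convex'' follows formally once we observe that $(\Gamma^{>})'$ is itself upward closed in $\Gamma$---a consequence of $(\Gamma^{<})'<(\Gamma^{>})'$ from ADH~\ref{derivativestrictlyincreasing}(1) together with ADH~\ref{atmostonegap}---so any $\tau$ sandwiched between two elements of $S$ automatically lies in $(\Gamma^{>})'$ and then in $S$ by the downward closedness just proved.

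The only conceptual step is recognizing the group structure of $T$ and that $T$ hits every value in $(\Gamma^{>})'$; after that the lemma is essentially just the ultrametric inequality, so there is no real obstacle to overcome.
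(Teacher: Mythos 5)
Your proof is correct and is essentially the paper's argument in slightly more conceptual packaging: the paper's choice $\epsilon_0=\delta+\epsilon_1+\delta\epsilon_1$ is exactly your $(1+\epsilon_0)=(1+\delta)(1+\epsilon_1)$, i.e.\ adding $(1+\delta)^{\dagger}$ with $v(\delta')=\alpha$ inside the group $T=(1+\smallo)^{\dagger}$ and then invoking the strict ultrametric inequality. Your explicit justification that $(\Gamma^{>})'$ is upward closed (so that downward closedness in $(\Gamma^{>})'$ yields convexity in $\Gamma$) is a detail the paper leaves implicit, and it is handled correctly.
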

\begin{proof}
Let $\epsilon_1\prec 1$ in $K$ and $\alpha,\beta\in(\Gamma^{>})'$ be such that
\[
\alpha\ <\ v\left(s-\frac{\epsilon_1'}{1+\epsilon_1}\right)\ =\ \beta.
\]
Let $\delta\prec 1$ in $K$ be such that $v(\delta') = \alpha$ and set $\epsilon_0:= \delta+\epsilon_1+\delta\epsilon_1$. Note that
\begin{align*}
\frac{\epsilon_1'}{1+\epsilon_1}-\frac{\epsilon_0'}{1+\epsilon_0}\ &=\ \frac{\epsilon_1'}{1+\epsilon_1} - (1+\delta+\epsilon_1+\delta\epsilon_1)^{\dagger} \\
&=\ \frac{\epsilon_1'}{1+\epsilon_1} - ((1+\delta)(1+\epsilon_1))^{\dagger} \\
&=\ \frac{\epsilon_1'}{1+\epsilon_1} - \frac{\delta'}{1+\delta} - \frac{\epsilon_1'}{1+\epsilon_1} \\
&=\ -\frac{\delta'}{1+\delta}
\end{align*}
and thus
\[
v\left(\frac{\epsilon_1'}{1+\epsilon_1} - \frac{\epsilon_0'}{1+\epsilon_0}\right) = v\left(\frac{\delta'}{1+\delta}\right) = \alpha.
\]
Finally, note that
\[
v\left(s - \frac{\epsilon_0'}{1+\epsilon_0}\right)\ =\ v\left(\left(s-\frac{\epsilon_1'}{1+\epsilon_1}\right) + \left(\frac{\epsilon_1'}{1+\epsilon_1}- \frac{\epsilon_0'}{1+\epsilon_0}\right)\right) = \min(\beta,\alpha)\ =\ \alpha\in S. \qedhere
\]
\end{proof}

\noindent
The next lemma shows that $S$ is a transform of the positive portion of the set $v(y-K)$.

\begin{lemma}
\label{integralSSmallExpInt}
$(v(y-K)^{>0})' = S$, and equivalently $v(y-K)^{>0} = \textstyle\int S$.
\end{lemma}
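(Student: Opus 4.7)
The plan is to exploit the logarithmic-derivative identity
\[
s - \frac{\epsilon'}{1+\epsilon}\ =\ \left(\frac{1+y}{1+\epsilon}\right)^{\dagger}
\]
for $\epsilon \in K^{\prec 1}$, and then translate the valuation of this logarithmic derivative into the asymptotic-couple operation $'$ applied to $v(y-\epsilon)$.

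First I would set $z := (1+y)/(1+\epsilon) - 1 = (y-\epsilon)/(1+\epsilon)$. Since $y \prec 1$ and $\epsilon \prec 1$, we have $y - \epsilon \prec 1$ and $1 + \epsilon \asymp 1$, so $v(z) = v(y-\epsilon) > 0$; in particular $z \not\asymp 1$ and $1+z \asymp 1$. The identity above gives
\[
s - \frac{\epsilon'}{1+\epsilon}\ =\ (1+z)^{\dagger}\ =\ \frac{z'}{1+z},
\]
and therefore $v\bigl(s - \epsilon'/(1+\epsilon)\bigr) = v(z')$. Using that the asymptotic couple of $L$ agrees with that of $K$ (since $L|K$ is immediate) and that $v(z)\ne 0$, we get $v(z') = v(z) + \psi(v(z)) = v(z)' = v(y-\epsilon)'$. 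Thus for each $\epsilon \in K^{\prec 1}$,
\[
v\!\left(s - \frac{\epsilon'}{1+\epsilon}\right)\ =\ v(y-\epsilon)'.
\]

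Next I would check both inclusions. For $S \subseteq (v(y-K)^{>0})'$: for $\epsilon \in K^{\prec 1}$ the element $v(y-\epsilon)$ lies in $v(y-K)^{>0}$, and applying $'$ gives an element of $S$ by the formula above. Conversely, take $\gamma \in v(y-K)^{>0}$ with $\gamma = v(y - a)$, $a \in K$. Since $y \prec 1$ and $v(y-a)>0$ forces $y - a \prec 1$, we have $a = y - (y-a) \prec 1$, so $a \in K^{\prec 1}$. Hence $\gamma' = v(y-a)' = v(s - a'/(1+a)) \in S$. This gives the reverse inclusion, so $(v(y-K)^{>0})' = S$.

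The equivalent form $v(y-K)^{>0} = \int S$ is then just the observation that by ADH~\ref{derivativestrictlyincreasing} the map $'$ restricts to a bijection $\Gamma^{>} \to (\Gamma^{>})'$ with inverse $\int$; applying $\int$ to both sides of the already-established equality $S = (v(y-K)^{>0})'$ (all terms of which lie in $(\Gamma^{>})'$, a set on which $\int$ is defined) yields the second form. There is essentially no obstacle here beyond correctly tracking the valuations; the only step that requires a little care is noting that $a \prec 1$ automatically when $v(y-a)>0$, which is what lets us parametrize $v(y-K)^{>0}$ by $\epsilon \in K^{\prec 1}$ rather than by arbitrary $a \in K$.
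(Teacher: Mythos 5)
Your proof is correct and follows essentially the same route as the paper's: both parametrize $v(y-K)^{>0}$ by $\epsilon\in K^{\prec 1}$ and reduce everything to the identity $v\bigl(s-\epsilon'/(1+\epsilon)\bigr) = (v(y-\epsilon))'$. You establish that identity a bit more slickly, via $s-\epsilon'/(1+\epsilon) = (1+z)^{\dagger}$ with $z=(y-\epsilon)/(1+\epsilon)$ and the asymptotic-couple relation $v(z^{\dagger})=\psi(vz)$, whereas the paper expands the fraction directly and uses (PDV) to identify the dominant term $y'-\epsilon'$; the substance is the same.
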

\begin{proof}
($\subseteq$) Let $\epsilon\in K$ be such that $v(y-\epsilon)>0$. Then necessarily $\epsilon\prec 1$ since $y\prec 1$. Thus it suffices to prove that $(v(y-\epsilon))' = v(y'-\epsilon')\in S$. By (PDV) it follows that $(y-\epsilon)'\succ \epsilon'(y-\epsilon)$. Thus
\[
s-\frac{\epsilon'}{1+\epsilon}\ =\ \frac{y'}{1+y} - \frac{\epsilon'}{1+\epsilon}\ =\ \frac{y'(1+\epsilon)-\epsilon'(1+y)}{(1+y)(1+\epsilon)}\ =\ \frac{(1+\epsilon)(y-\epsilon)' - \epsilon'(y-\epsilon)}{(1+y)(1+\epsilon)}
\]
\[
\asymp\ (1+\epsilon)(y-\epsilon)' - \epsilon'(y-\epsilon)\ \asymp\ y'-\epsilon'.
\]
We conclude that $v(y'-\epsilon') = (v(y-\epsilon))'\in S$. 

For the $(\supseteq)$ direction, suppose that $\alpha = v(s-\epsilon'/(1+\epsilon))\in S$ where $\epsilon\in K^{\prec 1}$. Then the calculation in reverse shows that $\alpha = v(y'-\epsilon') = (v(y-\epsilon))'\in (v(y-K)^{>0})'$.
\end{proof}

\noindent
The next lemma gives us a ``definable yardstick'' that we can use for going up the set $S$. 
If $K$ has small integration, then we can obtain a longer yardstick in the sense of Lemma~\ref{ACyardstick}, however the shorter yardstick will be good enough for our purposes.

\begin{lemma}
\label{SyardstickSmallExpInt}
Suppose $\gamma\in S$. Then $\gamma<\gamma-\textstyle\int s\gamma\in S$. If $\I(K) = \der\smallo$, then $\gamma<\gamma+\textstyle\int\gamma\in S$. Thus $S$ has the derived yardstick property and so $v(y-K)^{>0}$ and $v(y-K)$ both have the yardstick property.
\end{lemma}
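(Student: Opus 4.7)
Plan. The natural route is to exhibit, for each $\gamma \in S$, an explicit refinement $\epsilon_1 \in K^{\prec 1}$ of the witness $\epsilon$ that realizes a value exceeding $\gamma - \int s\gamma$; combined with the downward closure of $S$ in $(\Gamma^{>})'$ (Lemma~\ref{SconvexSmallExpInt}), this will give $\gamma - \int s\gamma \in S$. The derived yardstick property for $S$ then follows immediately (any $\beta \in S$ works), and Proposition~\ref{derivedyardstickproperty} together with Lemma~\ref{integralSSmallExpInt} transfers this to the yardstick property of $v(y-K)^{>0} = \int S$; extending from $v(y-K)^{>0}$ to all of $v(y-K)$ is automatic since $v(y-K)$ is downward closed.

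For the construction, let $u := s - \epsilon'/(1+\epsilon) \in K$, so $v(u) = \gamma$, and set $\tilde\gamma := \int\gamma \in \Gamma^{>}$. Pick $t \in K^{\prec 1}$ with $v(t) = \tilde\gamma$, so $v(t') = \gamma$. The key choice is $\delta := (u/t')\cdot t \in K^{\prec 1}$: the product rule yields the clean cancellation $u - \delta' = -(u/t')'\cdot t$, whence $v(u - \delta') = v((u/t')') + \tilde\gamma$. Because $u/t' \in \mathcal{O}^{\times}$, axiom (PDV) applied to this unit forces $v((u/t')') > \Psi$. A short rearrangement using Lemma~\ref{functionfacts}(\ref{integralidentity}) on $\psi(\tilde\gamma)$ gives $\gamma - \chi\tilde\gamma = \tilde\gamma + s\psi(\tilde\gamma)$ with $s\psi(\tilde\gamma) \in \Psi$, while Lemma~\ref{ACyardstick} identifies $\int s\gamma = \chi\tilde\gamma$; together these upgrade the (PDV) bound to $v(u - \delta') > \gamma - \int s\gamma$.

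To return to $S$, set $1+\epsilon_1 := (1+\epsilon)(1+\delta)$, so $s - (1+\epsilon_1)^{\dagger} = u - \delta'/(1+\delta)$. The correction term satisfies $v(\delta'\delta/(1+\delta)) = \gamma + \tilde\gamma$, which also exceeds $\gamma - \int s\gamma$ by Lemma~\ref{functionfacts}(\ref{contractionproperty}) (forcing $-\chi\tilde\gamma < \tilde\gamma$). Hence $v(s - (1+\epsilon_1)^{\dagger}) > \gamma - \int s\gamma$. Since $(\Gamma^{>})'$ is upward closed in $\Gamma$ (each case of the trichotomy ADH~\ref{ACtrichotomy} puts $(\Gamma^{>})'$ above $(\Gamma^{<})' \cup \Psi$ with no intervening elements), we have $\gamma - \int s\gamma \in (\Gamma^{>})'$, and Lemma~\ref{SconvexSmallExpInt} delivers $\gamma - \int s\gamma \in S$.

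For the longer yardstick, the hypothesis $\I(K) = \der\smallo$ lets us integrate $u$ exactly: $u \in \I(K)$ since $v(u) = \gamma \in (\Gamma^{>})'$, so $u = w'$ for some $w \in \smallo$ with necessarily $v(w) = \tilde\gamma$. Taking $\delta := w$ annihilates $u - \delta'$, leaving $v(s - (1+\epsilon_1)^{\dagger}) = v(\delta'\delta/(1+\delta)) = \gamma + \tilde\gamma = \gamma + \int\gamma$ directly in $S$. The main obstacle, and the reason the short-yardstick argument requires the approximation $\delta = (u/t')t$ instead of an exact antiderivative, is precisely the absence of integration in $K$: the cancellation $u - (u/t')\cdot t' = 0$ reduces everything to controlling $(u/t')'$, which is exactly what the pre-$\d$-valued axiom furnishes.
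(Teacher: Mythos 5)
Your proposal is correct and follows essentially the same route as the paper: your $\delta=(u/t')\,t$ and $1+\epsilon_1=(1+\epsilon)(1+\delta)$ are exactly the paper's $ub$ and $(1+\epsilon)(1+ub)$, the key bound comes from (PDV) giving $v\bigl((u/t')'\bigr)>\Psi\ni s^2\gamma$ and hence $v(u-\delta')>\gamma-\int s\gamma$, and the $\I(K)=\der\smallo$ case is handled by taking an exact antiderivative, just as in the paper. The only (welcome) difference is that you make explicit two points the paper leaves implicit, namely that $(\Gamma^{>})'$ is upward closed so that downward closure of $S$ in $(\Gamma^{>})'$ applies, and how the yardstick property passes from $v(y-K)^{>0}$ to $v(y-K)$.
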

\begin{proof}
Let $\gamma\in S$ and take $\epsilon\prec 1$ in $K$ such that $\gamma = v(s-\epsilon'/(1+\epsilon))$. Next take $b\prec 1$ in $K$ such that $v(b') = (v(b))' = \gamma$ (and so $v(b) = \textstyle\int\gamma$). 
Take $u\in K$ with $s-\epsilon'/(1+\epsilon) = ub'$, so $u\asymp 1$.
Next let $\delta\prec 1$ be such that $(1+\epsilon)(1+ub) = 1+\delta$. Now note that
\begin{align*}
s-\frac{\delta'}{1+\delta}\ &=\ s- ((1+\epsilon)(1+ub))^{\dagger} \\
&=\ s-\frac{\epsilon'}{1+\epsilon} - \frac{(ub)'}{1+ub} \\
&=\ ub' - \frac{(ub)'}{1+ub} \\
&=\ \frac{u^2bb' - u'b}{1+ub}.
\end{align*}
However, since $\Psi\ni s^2\gamma < v(u')\in \Gamma^{>\Psi}$, we have
\begin{align*}
v(u'b)\ &=\ v(u'b'(b^{\dagger})^{-1}) \\
&=\ v(u') - \psi\textstyle\int\gamma + \gamma \\
&>\ s^2\gamma-s\gamma+\gamma \\
&=\ -\textstyle\int s\gamma + \gamma \quad\text{(by Lemma~\ref{functionfacts}(\ref{integralidentity}))}.
\end{align*}
Thus by Lemma~\ref{ACyardstick}, we have
\begin{align*}
v\left(s-\frac{\delta'}{1+\delta}\right)\ &\geq\ \min(v(u^2bb'),v(u'b)) \\
&\geq\ \min(\gamma+\textstyle\int\gamma, -\textstyle\int s\gamma+\gamma) \\
&=\ \gamma-\textstyle\int s\gamma\ >\ \gamma.
\end{align*}
Finally, by Lemma~\ref{SconvexSmallExpInt}, it follows that $\gamma-\textstyle\int s\gamma\in S$.

If $\I(K) = \der\smallo$, then we can arrange $u=1$ above and thus
\[
s-\frac{\delta'}{1+\delta}\ =\ \frac{bb'}{1+b}\ \asymp\ bb'
\]
and so $v(bb') = \gamma+\textstyle\int\gamma$.

The claim about $v(y-K)^{>0}$ now follows from Lemma~\ref{integralSSmallExpInt} and Proposition~\ref{derivedyardstickproperty}.
\end{proof}

\noindent
Proposition~\ref{lambdafreesmallexpint} now follows immediately from Lemma~\ref{SyardstickSmallExpInt} and Proposition~\ref{yardstickprop}.

\section{Small integration}
\label{smallintsection}

\noindent
\emph{In this section, we assume that $K$ is a henselian pre-$\d$-valued field of $H$-type
and we fix an element $s\in K$ such that $v(s)\in (\Gamma^{>})'$ and $s\not\in\der\smallo$.}
In particular, $K$ \emph{does not} have small integration.
Define the following nonempty set:
\[
S\ :=\ \{v(s-\epsilon'):\epsilon\in K^{\prec1}\}\ \subseteq\ (\Gamma^{>})'\ \subseteq\ \Gamma_{\infty}.
\]
As $K$ is pre-$\d$-valued, we have the following which elaborates on~\cite[10.2.5(iii)]{adamtt}:
\begin{lemma}
\label{SconvexSmallInt}
$S$ has no largest element 
and is a downward closed subset of $(\Gamma^{>})'$; in particular, $S$ is convex
\end{lemma}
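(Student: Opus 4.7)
The plan is to imitate Lemma~\ref{SconvexSmallExpInt} for the downward-closedness, and to give a separate \emph{improvement argument} for the lack of a largest element, in which the pre-$\d$-valued axiom (PDV) plays the essential role.

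\textbf{Downward-closedness.} Let $\beta = v(s-\epsilon_1')\in S$ with $\epsilon_1\in K^{\prec 1}$, and let $\alpha\in(\Gamma^{>})'$ with $\alpha<\beta$. Since $\alpha\in(\Gamma^{>})'$ I can write $\alpha=\beta_0+\psi(\beta_0)$ for some $\beta_0\in\Gamma^{>}$, pick $d\in K$ with $vd=\beta_0$ (so $d\prec 1$), and note that $v(d') = \beta_0+\psi(\beta_0)=\alpha$. Set $\epsilon_0:= \epsilon_1+d\in K^{\prec 1}$; then
\[
s-\epsilon_0'\ =\ (s-\epsilon_1')-d',
\]
which has valuation $\min(\beta,\alpha)=\alpha$ since $\alpha<\beta$. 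So $\alpha\in S$, giving downward-closedness in $(\Gamma^{>})'$; convexity follows because $(\Gamma^{>})'$ itself is upward closed in $\Gamma$ (by ADH~\ref{ACtrichotomy} and the position of $(\Gamma^{<})'$ and $\Psi$ relative to $(\Gamma^{>})'$).

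\textbf{No largest element.} Assume for contradiction that $\alpha = v(s-\epsilon')$ is the largest element of $S$, with $\epsilon\in K^{\prec 1}$, and set $u:= s-\epsilon'\in K$. Note $u\neq 0$ since $s\not\in\der\smallo$, so $vu=\alpha\in(\Gamma^{>})'$. Write $\alpha=\beta+\psi(\beta)$ with $\beta\in\Gamma^{>}$, and pick $d\in K^{\prec 1}$ with $vd=\beta$, so that $v(d')=\alpha$. Then $c:= u/d'\in K$ satisfies $c\asymp 1$. The candidate improvement is $\delta:= cd\in K^{\prec 1}$, for which
\[
\delta'\ =\ (cd)'\ =\ cd'+c'd\ =\ u+c'd,
\]
so that $u-\delta' = -c'd$. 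The point is that by (PDV) applied to $c\preccurlyeq 1$ and $d\prec 1$ we have $c'\prec d^{\dagger}$, i.e., $v(c')>\psi(\beta)$, so $v(c'd)>\psi(\beta)+\beta=\alpha$. Therefore
\[
v\bigl(s-(\epsilon+\delta)'\bigr)\ =\ v(u-\delta')\ =\ v(c'd)\ >\ \alpha,
\]
and this valuation is finite (otherwise $s=(\epsilon+\delta)'\in\der\smallo$), hence lies in $(\Gamma^{>})'$. Since $\epsilon+\delta\in K^{\prec 1}$, this puts a strictly larger element in $S$, contradicting maximality.

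\textbf{Where the difficulty lies.} The downward-closedness is essentially free once one picks a $d$ of the correct valuation, just as in Lemma~\ref{SconvexSmallExpInt}. The only real content is in the ``no largest element'' step, and more specifically in showing that the error term $c'd$ arising from the failure of $c$ to be a constant is small enough to be swallowed by $\alpha$. This is exactly what (PDV) delivers: it forces $v(c')>\Psi$ for every $c\preccurlyeq 1$, and in particular $v(c')>\psi(\beta)$, which is precisely the margin needed so that $v(c'd)>\beta+\psi(\beta)=\alpha$. Without the pre-$\d$-valued hypothesis one would have to know that $\res(u/d')$ is represented by a constant, which is not available in the pre-$\d$-valued setting, so the use of (PDV) cannot be avoided.
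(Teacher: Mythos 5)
Your proof is correct and follows essentially the same route as the paper: for the absence of a largest element, you write $s-\epsilon'=cd'$ with $c\asymp 1$ and $v(d')=\alpha$, and use (PDV) to see that the correction term $c'd$ in $(cd)'$ has valuation $>\alpha$, which is exactly the paper's argument (with $u,b$ in place of your $c,d$). Your spelled-out downward-closedness step is the same translation-by-$d'$ trick the paper leaves as ``follows similarly.''
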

\begin{proof}
First note that $v(s)\in S$. Next take $\gamma\in S$ with $\gamma\geq v(s)$, and write $\gamma = v(s-\epsilon')$ for some $\epsilon\prec 1$ in $K$. As $\gamma\in (\Gamma^{>})'$, we can take some $b\prec 1$ in $K$ such that $v(b') = \gamma$. Thus for some $u\asymp1$ in $K$ we have $v(s-\epsilon'-ub')>\gamma$. By (PDV), $v(u'b)>v(b') = \gamma$ and so $v(s-\epsilon'-(ub)')>\gamma$. This shows that $S$ has no largest element.
The claim that $S = S^{\downarrow}\cap(\Gamma^{>})'$ follows similarly from $S\subseteq (\Gamma^{>})'$.
\end{proof}

\noindent
Take a field extension $L=K(y)$ with $y$ transcendental over $K$, equipped with the unique derivation extending the derivation of $K$ such that $y'=s$.

\begin{ADH}\cite[10.2.4 and 10.5.8]{adamtt}
\label{ADHsmallint}
There is a unique valuation of $L$ that makes it an $H$-asymptotic extension of $K$ with $y\not\asymp 1$. With this valuation $L$ is an immediate extension of $K$ with $y\prec 1$ and $L$ is pre-$\d$-valued. Furthermore, if $K$ is equipped with an ordering making it a pre-$H$-field, then there is a unique ordering on $L$ making it a pre-$H$-field extension of $K$.
\end{ADH}

\noindent
For the rest of this section equip $L$ with this valuation. The main result of this section is the following:

\begin{prop}
\label{lambdafreesmallint}
If $K$ is $\upl$-free, then so is $L = K(y)$.
\end{prop}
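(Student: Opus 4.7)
The plan is to apply Proposition~\ref{yardstickprop} to $L = K(y)$, mirroring the argument of \S\ref{smallexpintsection}. By ADH~\ref{ADHsmallint}, $L$ is an immediate $H$-asymptotic extension of $K$ with $y$ transcendental over $K$, so since $K$ is henselian and $\upl$-free, it suffices to show that $v(y-K) \subseteq \Gamma$ has the yardstick property. Following the template, I will establish that $S$ has the derived yardstick property and then invoke Proposition~\ref{derivedyardstickproperty}.

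First, the analog of Lemma~\ref{integralSSmallExpInt} holds: $(v(y-K)^{>0})' = S$. Since $y' = s$ directly, for $\epsilon \in K$ with $v(y-\epsilon) > 0$ (necessarily $\epsilon \prec 1$), $(v(y-\epsilon))' = v(y' - \epsilon') = v(s - \epsilon') \in S$, and conversely every element of $S$ arises this way. This step is simpler than its exponential counterpart because there is no $(1+y)$ denominator to manipulate.

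The main technical work is to verify the derived yardstick property for $S$. Given $\gamma = v(s - \epsilon') \in S$ with $\epsilon \prec 1$ in $K$, pick $b \prec 1$ in $K$ with $v(b') = \gamma$ (so $v(b) = \int\gamma$) and $u \asymp 1$ in $K$ with $s - \epsilon' = ub'$. Then $\eta := \epsilon + ub \prec 1$ satisfies
\[
s - \eta'\ =\ s - \epsilon' - u'b - ub'\ =\ -u'b.
\]
By (PDV) applied to $u \preccurlyeq 1$ and all $g \prec 1$, $v(u') > \Psi$, so in particular $v(u') > s^2\gamma$, and using the integral identity (Lemma~\ref{functionfacts}(\ref{integralidentity})),
\[
v(s - \eta')\ =\ v(u'b)\ =\ v(u') + \textstyle\int\gamma\ >\ s^2\gamma + \textstyle\int\gamma\ =\ \gamma - \textstyle\int s\gamma.
\]
Thus $v(s - \eta') \in S$ strictly exceeds $\gamma - \int s\gamma$. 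Since $K$ is $\upl$-free, ADH~\ref{nogapcreator} gives rational (hence actual) asymptotic integration, placing $\gamma - \int s\gamma \in (\Gamma^{>})'$; by the downward closedness of $S$ in $(\Gamma^{>})'$ from Lemma~\ref{SconvexSmallInt}, we conclude $\gamma - \int s\gamma \in S$.

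Finally, Proposition~\ref{derivedyardstickproperty} yields that $\int S = v(y-K)^{>0}$ has the yardstick property, which immediately transfers to the full downward closed set $v(y-K)$ (any witness from the positive part suffices, since $\gamma - \chi(\gamma) > \gamma > 0$ for $\gamma$ positive). Proposition~\ref{yardstickprop} then delivers the $\upl$-freeness of $L = K(y)$. The main obstacle is the identity for $v(s - \eta')$ in the third paragraph, whose verification hinges on (PDV) and the integral identity; the remainder of the argument is bookkeeping structurally parallel to \S\ref{smallexpintsection}.
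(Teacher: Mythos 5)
Your proposal is correct and follows essentially the same route as the paper: establishing $(v(y-K)^{>0})'=S$, verifying the derived yardstick property for $S$ via the perturbation $\epsilon+ub$ and the estimate $v(u')>\Psi$, and then invoking Propositions~\ref{derivedyardstickproperty} and~\ref{yardstickprop}. Your computation of $v(s-\eta')=v(u'b)>\gamma-\int s\gamma$ matches the paper's Lemma~\ref{SyardstickSmallInt}, and your explicit justification that $\gamma-\int s\gamma$ lands back in $S$ (via asymptotic integration and downward closedness in $(\Gamma^{>})'$) correctly fills in a step the paper leaves implicit.
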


\noindent
We will delay the proof of Proposition~\ref{lambdafreesmallint} until the end of the section.

\begin{lemma}
\label{integralSSmallInt}
$(v(y-K)^{>0})' = S$, and equivalently $v(y-K)^{>0} = \textstyle\int S$.
\end{lemma}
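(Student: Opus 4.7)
The plan is to parallel the proof of Lemma~\ref{integralSSmallExpInt} from the previous section, but with the computations simplified by the fact that we are dealing with $y' = s$ rather than $(1+y)^\dagger = s$. The two displayed equalities are equivalent because $\gamma \mapsto \gamma'$ is a bijection from $\Gamma^{>0}$ onto $(\Gamma^>)'$ by ADH~\ref{derivativestrictlyincreasing}, so proving either of them gives the other.

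For the inclusion $(v(y-K)^{>0})' \subseteq S$, I would take $\gamma \in v(y-K)^{>0}$ and write $\gamma = v(y - \epsilon)$ for some $\epsilon \in K$. Since $y \prec 1$, the condition $v(y-\epsilon) > 0$ forces $\epsilon \prec 1$. Now $(y-\epsilon)' = s - \epsilon'$, and since $y - \epsilon \not\asymp 1$ is a nonzero element with positive valuation in an asymptotic field, ADH~\ref{asympfieldhaveAC} gives
\[
v(s - \epsilon')\ =\ v((y-\epsilon)')\ =\ (v(y-\epsilon))'\ =\ \gamma',
\]
so $\gamma' \in S$.

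For the reverse inclusion $S \subseteq (v(y-K)^{>0})'$, I would run the same calculation in reverse: given $\alpha = v(s - \epsilon') \in S$ with $\epsilon \prec 1$ in $K$, set $\gamma := v(y - \epsilon)$. Since $y$ is transcendental over $K$ we have $y - \epsilon \neq 0$, and since $y, \epsilon \prec 1$ we have $\gamma > 0$, so $\gamma \in v(y-K)^{>0}$. The same computation $(y-\epsilon)' = s - \epsilon'$ combined with $v(f') = (vf)'$ for $f \prec 1$, $f \neq 0$ yields $\alpha = \gamma' \in (v(y-K)^{>0})'$.

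There is no serious obstacle here; the only point that needs care is the use of the identity $v(f') = (vf)'$, which requires $f$ to be a nonzero element with $f \not\asymp 1$ — both guaranteed by $y - \epsilon \prec 1$ and $y - \epsilon \neq 0$. Everything else is a one-line calculation using $y' = s$ and the linearity of the derivation.
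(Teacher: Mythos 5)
Your proof is correct and follows essentially the same route as the paper's: both directions reduce to the computation $v(s-\epsilon') = v((y-\epsilon)') = (v(y-\epsilon))'$, justified by $y-\epsilon$ being a nonzero element with $y-\epsilon\prec 1$ (which also gives the equivalence $\epsilon\prec 1 \Leftrightarrow v(y-\epsilon)>0$). No gaps.
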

\begin{proof}
($\subseteq$) Let $\epsilon\in K$ be such that $y-\epsilon\prec 1$. Then necessarily $\epsilon\prec 1$ because $y\prec 1$. Let $\alpha = v(y-\epsilon)$. We want to show that $\alpha'\in S$. Note that because $y-\epsilon\not\asymp 1$, we get
\[
\alpha' \ =\ (v(y-\epsilon))' \ =\ v(y'-\epsilon') \ =\ v(s-\epsilon')\in S.
\]

For the ($\supseteq$) direction, let $\epsilon\prec 1$ be such that $\alpha = v(s-\epsilon')$ is an arbitrary element of $S$. Then by arguing as above, $v(y-\epsilon)>0$ and $(v(y-\epsilon))' = \alpha$.
\end{proof}

\begin{lemma}
\label{SyardstickSmallInt}
Suppose $\gamma\in S$. Then $\gamma<\gamma-\textstyle\int s\gamma\in S$. If $\I(K) = (1+\smallo)^{\dagger}$, then $\gamma<\gamma+\int\gamma\in S$. Thus $S$ has the derived yardstick property and so $v(y-K)^{>0}$ and $v(y-K)$ both have the yardstick property.
\end{lemma}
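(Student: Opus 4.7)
The proof mirrors Lemma~\ref{SyardstickSmallExpInt}, but with the multiplicative substitution $(1+\epsilon)(1+ub)=1+\delta$ replaced by the additive one $\delta = \epsilon + ub$. Given $\gamma \in S$, first write $\gamma = v(s-\epsilon')$ for some $\epsilon \prec 1$ in $K$; since $\gamma \in (\Gamma^>)'$ we can pick $b \prec 1$ in $K$ with $v(b')=\gamma$ and write $s-\epsilon' = ub'$ for a unique $u \asymp 1$ in $K$. Setting $\delta := \epsilon + ub \prec 1$, a direct computation yields $s-\delta' = (s-\epsilon') - u'b - ub' = -u'b$. The key inequality comes from (PDV): since $u \preccurlyeq 1$, we have $u' \prec g^\dagger$ for every $g \prec 1$ in $K^\times$, so $v(u') > \Psi$, and in particular $v(u') > s^2\gamma$. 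Combined with $v(b) = \int\gamma$ and the identity $s^2\gamma + \int\gamma = \gamma - \int s\gamma$ (two applications of Lemma~\ref{functionfacts}(\ref{integralidentity})), this gives $v(s-\delta') > \gamma - \int s\gamma$. Since $v(s-\delta') \in S$, since $-\int s\gamma > 0$ by Lemma~\ref{ACyardstick}, and since $S$ is convex by Lemma~\ref{SconvexSmallInt}, the inequalities $\gamma < \gamma - \int s\gamma \leq v(s-\delta')$ force $\gamma - \int s\gamma \in S$.

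For the refined yardstick under $\I(K) = (1+\smallo)^\dagger$, observe that $s-\epsilon' \preccurlyeq b'$ with $b \in \smallo \subseteq \mathcal{O}$ places $s-\epsilon' \in \I(K)$, so by hypothesis $s-\epsilon' = \eta'/(1+\eta)$ for some $\eta \prec 1$. Taking $\delta := \epsilon + \eta \prec 1$, one computes $s-\delta' = \eta'/(1+\eta) - \eta' = -\eta\eta'/(1+\eta)$; together with $v(\eta') = \gamma$ and $v(\eta) = \int\gamma$ (using $(v\eta)' = v(\eta')$ from $H$-asymptoticity), this gives $v(s-\delta') = \gamma + \int\gamma \in S$, which strictly exceeds $\gamma$ since $\int\gamma > 0$.

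These two claims provide the derived yardstick property of $S$: choosing any $\beta \in S$, for every $\gamma \in S^{>\beta}$ we have $\gamma - \int s\gamma \in S$ with $\gamma - \int s\gamma > \gamma > \beta$. Proposition~\ref{derivedyardstickproperty}, applied via the identification $\int S = v(y-K)^{>0}$ from Lemma~\ref{integralSSmallInt}, then promotes this to the yardstick property of $v(y-K)^{>0}$. Finally, any witnessing positive threshold $\beta$ in $v(y-K)^{>0}$ also witnesses the yardstick property for $v(y-K)$: every $\gamma \in v(y-K)$ with $\gamma > \beta$ is positive, hence lies in $v(y-K)^{>0}$, giving $\gamma - \chi(\gamma) \in v(y-K)^{>0} \subseteq v(y-K)$. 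The only step with genuine content is the (PDV)-based bound $v(u') > \Psi$; everything else is a template computation parallel to the small exponential integration case.
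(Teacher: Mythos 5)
Your proof is correct and follows essentially the same route as the paper's: write $s-\epsilon'=ub'$ with $u\asymp 1$, pass to $\epsilon+ub$ so that the error becomes $-u'b$, and bound $v(u'b)$ below by $\gamma-\int s\gamma$ using $v(u')>\Psi$ from (PDV); the refined case under $\I(K)=(1+\smallo)^{\dagger}$ and the deduction of the yardstick properties via Lemma~\ref{integralSSmallInt} and Proposition~\ref{derivedyardstickproperty} also match. (Your sign convention $\delta=\epsilon+ub$ is in fact the one consistent with the computation; the paper's displayed $\epsilon-ub$ is a typo.)
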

\begin{proof}
Suppose $\gamma\in S$ and take $\epsilon\prec 1$ in $K$ such that $\gamma = v(s-\epsilon')$. As $\gamma\in (\Gamma^{>})'$, we may take $b\prec 1$ in $K$ such that $b' \asymp s-\epsilon'$. Thus we may take some $u\asymp 1$ in $K$ such that $ub' = s-\epsilon'$. By (PDV), it follows that $v(u')>\Psi$. Thus
\begin{align*}
v(s-(\epsilon-ub)') &= v(s-\epsilon-ub'-u'b) \\
&= v(u'b) \\
&= v(u'b'(b^{\dagger})^{-1}) \\
&= v(u')-\psi\textstyle\int\gamma + \gamma \\
&> s^2\gamma-s\gamma+\gamma \\
&= -\textstyle\int s\gamma + \gamma.
\end{align*}

Next, assume that $(1+\smallo)^{\dagger} = \I(K)$. Then by the fact that $s-\epsilon'\in \I(K)$, there is $\delta\prec 1$ such that $s-\epsilon' = (1+\delta)^{\dagger}$, i.e.,
\[
s-\epsilon' \ =\ \frac{\delta'}{1+\delta}.
\]
Now note that
\[
s-(\epsilon+\delta)'\ =\ s-\epsilon'-\delta' \ =\  \frac{\delta'}{1+\delta} - \delta' \ =\  \frac{-\delta'\delta}{1+\delta}\ \asymp\ \delta'\delta
\]
and so
\[
S\ \ni\  v(s-(\epsilon+\delta)') \ =\  v(\delta'\delta) \ =\  \gamma+\textstyle\int\gamma.
\]

The claim about $v(y-K)^{>0}$ now follows from Lemma~\ref{integralSSmallInt} and Proposition~\ref{derivedyardstickproperty}.
\end{proof}

\noindent
Proposition~\ref{lambdafreesmallint} now follows immediately from Lemma~\ref{SyardstickSmallInt} and Proposition~\ref{yardstickprop}.

\section{Big integration}
\label{bigintsection}

\noindent
\emph{In this section, we assume that $K$ is a henselian pre-$\d$-valued field of $H$-type and 
we fix an element $s\in K$ such that}
\[
S\ :=\ \{v(s-a'): a\in K\}\ \subseteq\ (\Gamma^{<})'\subseteq\Gamma_{\infty}.
\]
It will necessarily be the case that $s\not\in \der K$ and $v(s)\in(\Gamma^{<})'$.

\begin{lemma}
\label{Snolargestbigint}
$S$ is downward closed and does not have a largest element.
\end{lemma}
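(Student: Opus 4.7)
The plan is to mirror the strategy from Lemmas~\ref{SconvexSmallExpInt} and~\ref{SconvexSmallInt}, adapted to the ``big'' setting where the approximations $a \in K$ are unrestricted in size. Both claims reduce to exhibiting an explicit modification of a given approximation $a$. For ``downward closed'' I interpret the statement (following the analogue in Lemma~\ref{SconvexSmallInt}) as: $S$ is downward closed inside $(\Gamma^{<})'$.

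For the \emph{no largest element} part, suppose $\gamma = v(s-a') \in S$. Since $\gamma \in (\Gamma^{<})'$, choose $b \in K$ with $v(b) \in \Gamma^{<}$ (so $b \succ 1$) and $v(b') = \gamma$. Set $u := (s-a')/b' \in K^{\times}$; then $u \asymp 1$ and $s - a' = ub'$. A direct calculation gives
\[
s - (a+ub)' \ =\ (s-a') - u'b - ub' \ =\ -u'b.
\]
Applying (PDV) with $f := u \preccurlyeq 1$ and $g := 1/b \prec 1$ yields $u' \prec (1/b)^{\dagger} = -b^{\dagger}$, and hence $v(u'b) > v(b^{\dagger}b) = v(b') = \gamma$. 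Thus $v(s - (a+ub)') > \gamma$, so $\gamma$ is not maximal in $S$.

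For the \emph{downward closed} part, given $\alpha \in (\Gamma^{<})'$ with $\alpha < \beta = v(s-a') \in S$, pick $\delta \in K$ with $v(\delta) \in \Gamma^{<}$ and $v(\delta') = \alpha$. Then
\[
s - (a+\delta)' \ =\ (s-a') - \delta'
\]
is the sum of two terms of distinct valuations $\beta$ and $\alpha$ with $\alpha < \beta$, so by the ultrametric inequality $v(s-(a+\delta)') = \alpha$; thus $\alpha \in S$.

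The only delicate point is the use of (PDV) in the first part: without it, differentiating the leading-coefficient correction $u \asymp 1$ could produce a term $u'b$ of valuation at most $\gamma$, destroying the improvement gained by subtracting off $ub'$. (PDV) is precisely what guarantees that $u'$ is small enough relative to $b^{\dagger}$ for the subtraction to strictly improve the valuation. The fact that we are in the ``big'' integration regime (where $b$ and $\delta$ are $\succ 1$ rather than $\prec 1$) is handled uniformly: the hypothesis $S \subseteq (\Gamma^{<})'$ forces the chosen antiderivatives $b,\delta$ to have negative valuation, so $b,\delta \succ 1$ automatically, and the computations are otherwise identical in form to those of the small case.
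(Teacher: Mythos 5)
Your proof is correct and follows essentially the same route as the paper's: for downward closedness one subtracts an element whose derivative has the target (smaller) valuation, and for the absence of a largest element one writes $s-a' = ub'$ with $u\asymp 1$ and $b\succ 1$ and uses (PDV) to see that $s-(a+ub)' = -u'b$ has valuation strictly greater than $\gamma$. The only cosmetic difference is that the paper proves downward closedness for arbitrary $\delta<\gamma$ in $\Gamma$ rather than only for $\alpha\in(\Gamma^{<})'$, but the two formulations agree, since by the trichotomy for $H$-asymptotic couples every element of $\Gamma$ lying below an element of $(\Gamma^{<})'$ is itself in $(\Gamma^{<})'$.
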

\begin{proof}
Let $\gamma = v(s-a')\in S$ for some $a\in K$. Suppose $\delta<\gamma$ in $\Gamma$. Then there is $f\in K$ such that $v(f') = \delta$ and thus $\delta = v(s-(a+f)')\in S$. Next, by $S\subseteq (\Gamma^{<})'$, take $b\in K$ such that $b'\asymp s-a'$. 
Thus we can take $u\asymp 1$ in $K$ with $ub' = s-a'$.
By (PDV), $u'b\prec b'$ and thus
 $\gamma<v(s-a'-(ub)')\in S$. 
\end{proof}

\noindent
Take a field extension $L=K(y)$ with $y$ transcendental over $K$, equipped with the unique derivation extending the derivation of $K$ such that $y'=s$.

\begin{ADH}\cite[10.2.6 and 10.5.8]{adamtt}
There is a unique valuation of $L$ making it an $H$-asymptotic extension of $K$. With this valuation $L$ is an immediate extension of $K$ with $y\succ 1$ and $L$ is pre-$\d$-valued. Furthermore, if $K$ is equipped with an ordering making it an pre-$H$-field, then there is a unique ordering on $L$ making it an pre-$H$-field extension of $K$.
\end{ADH}

\noindent
For the rest of this section equip $L$ with this valuation. The main result of this section is the following:

\begin{prop}
\label{lambdafreebigint}
If $K$ is $\upl$-free, then so is $L = K(y)$.
\end{prop}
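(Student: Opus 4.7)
The plan is to mirror the proofs of Propositions~\ref{lambdafreesmallexpint} and~\ref{lambdafreesmallint} and verify the hypotheses of Proposition~\ref{yardstickprop} applied to $v(y-K)$. The main difference from Section~\ref{smallintsection} is that here $y \succ 1$ and the relevant sets live on the negative side $(\Gamma^{<})'$ instead of $(\Gamma^{>})'$; otherwise the structural outline is identical.

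First I would establish the analogue of Lemma~\ref{integralSSmallInt}: $(v(y-K)^{<0})' = S$, equivalently $v(y-K)^{<0} = \textstyle\int S$. For $a \in K$ with $v(y-a)<0$ we have $y-a \succ 1$, so the asymptotic identity $v((y-a)') = (v(y-a))'$ together with $(y-a)' = s-a'$ gives $v(s-a') \in S$. Conversely, given $\alpha = v(s-a') \in S \subseteq (\Gamma^{<})'$, the fact that $\alpha$ is bounded above by some element of $\Psi$ rules out $y-a \preccurlyeq 1$ (which would force $v((y-a)') > \Psi$ by (PDV)), so $y-a \succ 1$ and $v(y-a) = \textstyle\int\alpha < 0$.

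Next I would show $S$ has the derived yardstick property. Given $\gamma \in S$, write $\gamma = v(s-a')$; since $\gamma \in (\Gamma^{<})'$, pick $b \in K$ (necessarily $b \succ 1$) with $b' \asymp s-a'$ and $u \asymp 1$ in $K$ with $ub' = s-a'$. A direct computation yields $s - (a+ub)' = -u'b$, and (PDV) gives $v(u')>\Psi$; hence
\[
v(s-(a+ub)')\ =\ v(u') + \textstyle\int\gamma\ >\ s^2\gamma + \textstyle\int\gamma\ =\ \gamma - \textstyle\int s\gamma,
\]
where the last equality applies Lemma~\ref{functionfacts}(\ref{integralidentity}) to both $\gamma$ and $s\gamma$. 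By the downward closedness of $S$ in $\Gamma$ (Lemma~\ref{Snolargestbigint}), this places $\gamma - \textstyle\int s\gamma$ in $S$; since $-\textstyle\int s\gamma > 0$, it lies strictly above $\gamma$, verifying the derived yardstick property.

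An application of Proposition~\ref{derivedyardstickproperty} then transfers this to the yardstick property on $\textstyle\int S = v(y-K)^{<0}$, and hence on $v(y-K)$ itself, as in Lemmas~\ref{SyardstickSmallExpInt} and~\ref{SyardstickSmallInt}. The conclusion follows from Proposition~\ref{yardstickprop}. The main potential obstacle is the calibration of the correspondence $v(y-K)^{<0} = \textstyle\int S$ — specifically, arguing that every $\alpha \in S$ corresponds to some $a \in K$ with $y - a \succ 1$ rather than $y - a \preccurlyeq 1$; the yardstick calculation itself is essentially the sign-flipped version of Lemma~\ref{SyardstickSmallInt}.
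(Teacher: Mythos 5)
Your proposal is correct and follows essentially the same route as the paper: identify $v(y-K)$ with $\int S$, verify the derived yardstick property for $S$ via the perturbation $a\mapsto a+ub$ with $ub'=s-a'$ and $v(u')>\Psi$ from (PDV), and then invoke Propositions~\ref{derivedyardstickproperty} and~\ref{yardstickprop}. The only (cosmetic) difference is that the paper first parametrizes $S^{>v(s)}$ as $\{v((g(1+\epsilon))'-s):\epsilon\prec 1\}$ for a fixed $g\succ 1$ with $g'\sim s$ and runs the same computation in that form, whereas you work directly with an arbitrary $\gamma=v(s-a')$; also note that since $S\subseteq(\Gamma^{<})'$ forces $y-a\succ 1$ for every $a\in K$, one in fact has $v(y-K)^{<0}=v(y-K)$, so no separate passage from the negative part to the whole set is needed.
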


\noindent
We will delay the proof of Proposition~\ref{lambdafreebigint} until the end of the section. 

\begin{lemma}
\label{integralSBigInt}
$v(y-K)'=S$ and equivalently $v(y-K) = \int S$.
\end{lemma}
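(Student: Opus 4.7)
The plan is to mirror the proof of Lemma~\ref{integralSSmallInt} from the small integration case. The one wrinkle is that here the statement concerns all of $v(y-K)$ rather than just its positive part, so I will first need to verify that $v(y-K)\subseteq\Gamma^{<}$; after that, everything reduces to a direct application of the asymptotic couple identity $v((y-a)')=(v(y-a))'$, which is valid whenever $y-a\not\asymp 1$.

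The main obstacle is therefore confining $v(y-K)$ to $\Gamma^{<}$, and I would argue by contradiction. Suppose $b\in K$ has $v(y-b)\geq 0$. If $v(y-b)=0$, then $y-b\asymp 1$; since $L$ is immediate over $K$ we can choose $c\in K$ with $\res(c)=\res(y-b)$ in $\boldsymbol{k}_L=\boldsymbol{k}_K$, yielding $v(y-(b+c))>0$ and reducing to the strictly positive case. If $v(y-b)>0$, then $y-b\prec 1$ is nonzero, and the asymptotic field property gives $(v(y-b))'=v((y-b)')=v(s-b')\in S$. But $v(y-b)>0$ forces $(v(y-b))'\in(\Gamma^{>})'$, which is disjoint from $(\Gamma^{<})'$ by ADH~\ref{derivativestrictlyincreasing}(1), contradicting $S\subseteq(\Gamma^{<})'$.

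With $v(y-K)\subseteq\Gamma^{<}$ in hand, both inclusions become symmetric. For $(\subseteq)$, take $\alpha=v(y-a)\in v(y-K)$; then $\alpha\neq 0$ so $y-a\not\asymp 1$, whence $\alpha'=v((y-a)')=v(s-a')\in S$ and $\alpha\in\int S$. For $(\supseteq)$, take $\alpha=v(s-a')\in S$; the same calculation applied to $b=a$ yields $v(y-a)\in\Gamma^{<}$ with $(v(y-a))'=\alpha$, and by the uniqueness part of ADH~\ref{derivativestrictlyincreasing}(2), $v(y-a)=\int\alpha$, so $\int\alpha\in v(y-K)$.
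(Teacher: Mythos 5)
Your proof is correct and follows essentially the same route as the paper: both arguments reduce to the identity $v((y-a)')=(v(y-a))'$ together with the observation that $v(s-a')\in S\subseteq(\Gamma^{<})'$ forces $y-a\succ 1$. You merely spell out (via the disjointness of $(\Gamma^{<})'$ and $(\Gamma^{>})'$ and the immediacy of $L$ over $K$) the step that the paper compresses into ``and so $y-x\succ 1$.''
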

\begin{proof}
Let $\gamma = v(y-x)$ with $x\in K$. Then $v(y'-x') = v(s-x')\in S\subseteq(\Gamma^{<})'$ and so $y-x\succ 1$. Thus $\gamma' = (v(y-x))' = v(y'-x') = v(s-x')\in S$. Conversely, if $\gamma = v(s-x')\in S$, then $\gamma = v(y'-x') = (v(y-x))'$.
\end{proof}

\noindent
By Lemma~\ref{Snolargestbigint}, we fix $g\in K^{\succ 1}$ such that $g'\sim s$.

\begin{lemma}
$S^{>v(s)}$ is cofinal in $S$ and
\[
S^{>v(s)}\ =\ \{v((g(1+\epsilon))'-s):\epsilon\prec 1\}.
\]
\end{lemma}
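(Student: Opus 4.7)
The plan is to handle the two assertions in sequence. For cofinality, note that the hypothesis $g' \sim s$ yields $v(s - g') > v(s)$, so $v(s - g') \in S^{>v(s)}$. Combined with Lemma~\ref{Snolargestbigint} (which says $S$ is downward closed without a largest element), for any $\gamma \in S$ we can pick $\delta \in S$ above $\max(\gamma, v(s))$, hence $\delta \in S^{>v(s)}$, establishing cofinality.

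For the inclusion $(\supseteq)$ of the equality, given $\epsilon \prec 1$ I would expand
\[
(g(1+\epsilon))' - s\ =\ (g'-s) + g'\epsilon + g\epsilon'
\]
and check that each summand has valuation strictly greater than $v(s)$. The first is controlled by $g' \sim s$; the second by $v(\epsilon) > 0$, which yields $v(g'\epsilon) = v(s) + v(\epsilon) > v(s)$; and the third by applying (PDV) to the pair $\epsilon \preccurlyeq 1$ and $g^{-1} \prec 1$: this gives $\epsilon' \prec (g^{-1})^{\dagger}$, i.e., $v(\epsilon') > v(g^{\dagger}) = v(s) - v(g)$, hence $v(g\epsilon') > v(s)$.

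For the inclusion $(\subseteq)$, take $\gamma \in S^{>v(s)}$ and write $\gamma = v(s - a')$ with $a \in K$; necessarily $a \neq 0$. I would set $\epsilon := a/g - 1$, so that $a = g(1+\epsilon)$ and automatically $v((g(1+\epsilon))' - s) = v(s - a') = \gamma$; the task then reduces to checking $\epsilon \prec 1$, equivalently $v(a - g) > v(g)$. From
\[
v(a' - g')\ =\ v\bigl((a' - s) - (g' - s)\bigr)\ >\ v(s)\ =\ v(g')
\]
we obtain $a' \sim g'$. A short case split (handling $a = g$, $a - g \asymp 1$, and $a - g \not\asymp 1$ separately) combined with the strict monotonicity of $\beta \mapsto \beta'$ on $\Gamma^{\neq}$ from ADH~\ref{derivativestrictlyincreasing} then delivers $v(a - g) > v(g)$.

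The main obstacle is the final case analysis: when $a - g \not\asymp 1$, one must convert $(v(a-g))' > (v(g))'$ into $v(a-g) > v(g)$, which requires invoking the monotonicity result and keeping track of whether $v(a-g)$ is positive or negative (the case $v(a-g) > 0$ being automatic since $v(g) < 0$, and the case $v(a-g) < 0$ following from strict monotonicity on $\Gamma^{<}$). The other two cases are immediate.
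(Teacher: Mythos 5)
Your proof is correct and follows essentially the same route as the paper's: the same cofinality argument from $v(s)\in S$ having no largest element above it, the same expansion $(g(1+\epsilon))'-s=(g'-s)+g'\epsilon+g\epsilon'$ controlled by (PDV), and the same reduction of the reverse inclusion to $a'\sim g'\Rightarrow a-g\prec g$. The only difference is that where the paper invokes this last implication as a standard asymptotic-field fact for $g\succ 1$, you re-derive it from ADH~\ref{derivativestrictlyincreasing} via a case split, which is a harmless (and complete) elaboration.
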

\begin{proof}
$S^{>v(s)}$ is cofinal in $S$ since $v(s)\in S$ and $S$ does not have a largest element. Suppose $\epsilon\prec 1$. Then by (PDV), $(g(1+\epsilon))' = g'+\epsilon'g+\epsilon g'\sim g'\sim s$ and so $(g(1+\epsilon))'-s\prec s$. Conversely, suppose $\gamma = v(x'-s)>vs$. Then $x'\sim s$ and so $x'\sim g'$. Thus $x'-g'\prec g'$. As $g\succ 1$, we get $x-g\prec g$. Thus $x = g(1+\epsilon)$ for some $\epsilon\prec 1$.
\end{proof}

\begin{lemma}
\label{SyardstickBigInt}
If $\gamma\in S^{>v(s)}$, then $\gamma<\gamma-\int s\gamma\in S$. Thus $S$ has the derived yardstick property and so $v(y-K)$ has the yardstick property.
\end{lemma}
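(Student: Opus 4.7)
The plan is to mirror the argument of Lemma~\ref{SyardstickSmallInt} almost verbatim, replacing the small-integration $\epsilon$-perturbation by the $g(1+\epsilon)$-perturbation made available by the previous lemma. Given $\gamma\in S^{>v(s)}$, the previous lemma lets me fix $\epsilon\prec 1$ with $\gamma=v((g(1+\epsilon))'-s)$; set $x:=g(1+\epsilon)\in K^{\succ 1}$, so $\gamma=v(x'-s)$. Since $\gamma\in(\Gamma^{<})'$, there is $b\succ 1$ in $K$ with $b'\asymp x'-s$, and I can write $x'-s = ub'$ for some $u\asymp 1$.

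Next I would form the candidate $x_1:=x-ub$ and observe that
\[
s-x_1'\ =\ s-x'+u'b+ub'\ =\ u'b,
\]
so $v(s-x_1')=v(u'b)$. The rest is the same computation as in Lemma~\ref{SyardstickSmallInt}: using $v(b)=\textstyle\int\gamma$, $\psi(vb)=s\gamma$, and the (PDV) estimate $v(u')>\Psi$ (hence $v(u')>s^2\gamma$), one gets
\[
v(s-x_1')\ =\ v(u')+\gamma-s\gamma\ >\ \gamma - \textstyle\int s\gamma,
\]
where the last step uses Lemma~\ref{functionfacts}(\ref{integralidentity}) applied to $s\gamma$ to rewrite $\textstyle\int s\gamma = s\gamma - s^2\gamma$. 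Because $S$ is downward closed (Lemma~\ref{Snolargestbigint}) and $-\textstyle\int s\gamma>0$ (Lemma~\ref{ACyardstick}), this forces $\gamma<\gamma-\textstyle\int s\gamma\in S$, and clearly $\gamma-\textstyle\int s\gamma>v(s)$. Thus $v(s)$ witnesses the derived yardstick property.

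For the final assertion, Lemma~\ref{integralSBigInt} identifies $\textstyle\int S$ with $v(y-K)$, so Proposition~\ref{derivedyardstickproperty} immediately promotes the derived yardstick property of $S$ to the yardstick property of $v(y-K)$. I do not anticipate a genuine obstacle: the one mild departure from the small-integration proof is that $x$ and $b$ now lie in $K^{\succ 1}$ rather than $K^{\prec 1}$, but the identity $s-x_1' = u'b$ and the ensuing valuation estimate are insensitive to this sign change, so the argument transports with no real adjustment.
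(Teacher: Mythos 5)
Your proof is correct and is essentially the paper's own argument in lighter notation: your $x_1=x-ub$ is exactly the paper's $g\bigl(1+\epsilon-\tfrac{u\delta}{g}\bigr)$, and the identity $s-x_1'=u'b$ together with $v(u')>\Psi$ gives the same estimate $v(u'b)>\gamma-\int s\gamma$. The only quibble is the citation of Lemma~\ref{ACyardstick} for $-\int s\gamma>0$: that lemma is stated for $\gamma\in(\Gamma^{>})'$ whereas here $\gamma\in(\Gamma^{<})'$, but the fact still holds since $s\gamma\in\Psi\subseteq(\Gamma^{<})'$ forces $\int s\gamma<0$.
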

\begin{proof}
Let $\gamma = v((g(1+\epsilon))'-s)$ for some $\epsilon\prec 1$. Note that
\[
(g(1+\epsilon))'-s \ =\ g'+g\epsilon'+g'\epsilon-s.
\]
Next take $\delta\succ 1$ such that 
\[
\delta'\ \sim\ g'+g\epsilon'+g'\epsilon-s,
\]
so $v(\delta') = \gamma$. This gives us $u\asymp 1$ such that
 \[
u\delta' \ =\ g'+g\epsilon'+g'\epsilon-s.
 \]
Then $\delta'\prec g'\asymp s$ and so $\delta\prec g$, i.e., $\delta/g\prec 1$. Furthermore, $u^{\dagger}\prec \delta^{\dagger}$ implies that $u'\delta\prec u\delta'$. Now consider the following element of $S^{>v(s)}$:
\[
\beta \ =\ v\left(\left(g\left(1+\epsilon-\frac{u\delta}{g}\right)\right)'-s\right).
\]
Note that:
\begin{eqnarray*}
\left(g\left(1+\epsilon-\frac{u\delta}{g}\right)\right)'-s\ &=&\ (g+g\epsilon-u\delta)' - s \\
&=&\ g' + g\epsilon'+g'\epsilon - u'\delta - u\delta' - s \\
&=&\ (g'+ g\epsilon'+g'\epsilon-s-u\delta') - u'\delta \\
&=&\ -u'\delta.
\end{eqnarray*}
Thus we can use that $v(u')>\Psi$ and $\gamma = v(\delta)+v(\delta^{\dagger})$ to get the yardstick:
\begin{eqnarray*}
v(-u'\delta)\ &=&\ v(u' (\delta^{\dagger})^{-1}\delta' ) \\
&=&\ v(u'(\delta^{\dagger})^{-1}) + \gamma \\
&=&\ v(u') - \psi\textstyle\int\gamma + \gamma \\
&=&\ v(u')-s\gamma + \gamma \\
&>&\ s^2\gamma-s\gamma + \gamma \\
&=&\ -\textstyle\int s\gamma + \gamma
\end{eqnarray*}
The claim about $v(y-K)$ now follows from Lemma~\ref{integralSBigInt} and Proposition~\ref{derivedyardstickproperty}.
\end{proof}

\noindent
Proposition~\ref{lambdafreebigint} now follows immediately from Lemma~\ref{SyardstickBigInt} and Proposition~\ref{yardstickprop}.

\section{The differential-valued hull and $H$-field hull}
\label{dvhull}

\noindent
\emph{In this section let $K$ be a pre-$\d$-valued field of $H$-type.}

\begin{ADH}\cite[10.3.1]{adamtt}
\label{dvhullexists}
$K$ has a $\d$-valued extension $\dv(K)$ of $H$-type such that any embedding of $K$ into any $\d$-valued field $L$ of $H$-type extends uniquely to an embedding of $\dv(K)$ into $L$.
\end{ADH}

\noindent
The $\d$-valued field $\dv(K)$ as in ADH~\ref{dvhullexists} above is called the \textbf{differential-valued hull of $K$}.

\begin{thm}
\label{dvKuplfree}
If $K$ is $\upl$-free, then $\dv(K)$ is $\upl$-free.
\end{thm}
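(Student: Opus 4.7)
The plan is to embed $\dv(K)$ into a $\upl$-free, $\d$-valued, immediate extension $K^*$ of $K$ built by iterated small integrations and henselizations, and then conclude by ADH~\ref{uplfreegoingdown}. Observe first that $K$ has asymptotic integration: it is ungrounded by $\upl$-freeness and has no gap by ADH~\ref{nogapcreator}, so the trichotomy ADH~\ref{ACtrichotomy} applies.

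The core construction step is the following. Let $M$ be any pre-$\d$-valued $H$-type field that is henselian, $\upl$-free, and not $\d$-valued, and pick $f\in\mathcal{O}_M$ with $f\asymp 1$ and $f\not\sim c$ for every $c\in C_M$. Set $s:=-f'$. Then $s$ satisfies the hypotheses of \S\ref{smallintsection}: first, $f\notin C_M$ forces $f'\neq 0$, and (PDV) applied to $f\preccurlyeq 1$ gives $v(f')>\Psi_M$, which combined with asymptotic integration yields $v(f')\in (\Gamma_M^{>})'$; second, $s\notin\der\smallo_M$, since any $\epsilon\prec 1$ in $M$ with $\epsilon'=-f'$ would produce $f+\epsilon\in C_M$ with $f+\epsilon\sim f$, contradicting our choice of $f$. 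Thus Proposition~\ref{lambdafreesmallint} yields a $\upl$-free, immediate pre-$\d$-valued $H$-type extension $M(y)$ with $y\prec 1$ and $y'=s$, and $c:=y+f$ is a new constant of $M(y)$ asymptotic to $f$, enlarging the image of the residue map on constants by exactly one element of $\boldsymbol{k}_M$.

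Iterating this procedure transfinitely — at each stage adjoining such a small integral, henselizing afterwards (preserving $\upl$-freeness by ADH~\ref{uplfreehens}), and taking directed unions at limit stages (preserving $\upl$-freeness by Lemma~\ref{uplfreedirectedunion}) — yields an ascending chain of $\upl$-free henselian pre-$\d$-valued $H$-type immediate extensions of $K$. Since immediate extensions fix the residue field, the process stabilizes once every element of $\res\mathcal{O}_K$ is realized as the residue of a constant; the resulting union $K^*$ is then a $\upl$-free, $\d$-valued $H$-type immediate extension of $K$.

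By the universal property of the $\d$-valued hull (ADH~\ref{dvhullexists}), there is an embedding $\dv(K)\hookrightarrow K^*$ over $K$. Since $K^*$ is immediate over $K$ and $K\subseteq\dv(K)\subseteq K^*$, the intermediate field $\dv(K)$ is also immediate over $K$, so $\Psi_{\dv(K)}=\Psi_K=\Psi_{K^*}$; in particular $\Psi_{\dv(K)}$ is trivially cofinal in $\Psi_{K^*}$, $\Gamma_{\dv(K)}=\Gamma_K\neq\{0\}$, and $\dv(K)$ is ungrounded. Applying ADH~\ref{uplfreegoingdown} to the extension $\dv(K)\subseteq K^*$ then gives $\upl$-freeness of $\dv(K)$. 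The main obstacle is the bookkeeping for the iterative construction of $K^*$ — in particular verifying at each stage that the chosen $s=-f'$ really does land in $(\Gamma^{>})'$ and avoid $\der\smallo$, so that Proposition~\ref{lambdafreesmallint} can be invoked cleanly; once this is in place the rest is routine transfinite induction and direct appeals to results already in hand.
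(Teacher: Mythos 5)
Your proof is correct and follows essentially the same route as the paper: iterate Proposition~\ref{lambdafreesmallint}, henselization (ADH~\ref{uplfreehens}), and directed unions (Lemma~\ref{uplfreedirectedunion}) to build a $\upl$-free immediate $\d$-valued extension, embed $\dv(K)$ into it via ADH~\ref{dvhullexists}, and descend by ADH~\ref{uplfreegoingdown}. The only cosmetic difference is that you target $\d$-valuedness directly by manufacturing constants $y+f$, whereas the paper arranges small integration and then invokes Lemma~\ref{predsmallintdv}; both work.
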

\begin{proof}
By iterating applications of ADH~\ref{uplfreehens}, Proposition~\ref{lambdafreesmallint}, and Lemma~\ref{uplfreedirectedunion}, we get an immediate henselian $\upl$-free $H$-asymptotic extension $L$ of $K$ which has small integration.
By Lemma~\ref{predsmallintdv}, $L$ will also be $\d$-valued. Thus by ADH~\ref{dvhullexists}, $\dv(K)$ can be identified with a subfield of $L$ which contains $K$. Finally, by Lemma~\ref{uplfreegoingdown} it follows that $\dv(K)$ is $\upl$-free.
\end{proof}

\begin{definition}
A gap $\beta$ in $K$ is said to be a \textbf{true gap} if no $b\asymp 1$ in $K$ satisfies $v(b') = \beta$, and is said to be a \textbf{fake gap} otherwise (that is, if there is $b\asymp 1$ in $K$ such that $v(b') = \beta$).
\end{definition}

\begin{remark}
Suppose $K$ has a gap $\beta$. Then the asymptotic couple $(\Gamma,\psi)$ ``believes'' it can make a choice about $\beta$, in the sense of Remark~\ref{gapremark}. However, if $\beta$ is a fake gap, then this choice is completely predetermined by $K$ itself. Indeed, if $L$ is a $\d$-valued extension of $K$ of $H$-type and $\beta$ is a fake gap, then there will be $\epsilon\in\smallo_L$ such that $v(\epsilon') = \beta$. However, if $\beta$ is a true gap, then both options of this choice are still available to $K$, see~\cite[10.3.2(ii), 10.2.1, and 10.2.2]{adamtt}.
\end{remark}

\begin{lemma}
If $K$ is $\d$-valued and has a gap $\beta$, then $\beta$ is a true gap.
\end{lemma}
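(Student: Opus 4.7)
The plan is a short proof by contradiction, exploiting directly the $\d$-valued decomposition $\mathcal{O} = C + \smallo$ together with the definition of a gap.

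Suppose toward a contradiction that $\beta$ is a fake gap, so there is $b \in K$ with $b \asymp 1$ and $v(b') = \beta$. Since $K$ is $\d$-valued, $\mathcal{O} = C + \smallo$, so I write $b = c + \epsilon$ with $c \in C$ and $\epsilon \in \smallo$. Differentiating gives $b' = \epsilon'$, because $c' = 0$. Note $\epsilon \neq 0$, for otherwise $b = c \in C$ and then $b' = 0$, contradicting $v(b') = \beta \in \Gamma$.

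Now $\epsilon \in \smallo^{\neq}$, so $v(\epsilon) \in \Gamma^{>}$. Applying the asymptotic couple of $K$ (ADH~\ref{asympfieldhaveAC}) gives
\[
v(b') \;=\; v(\epsilon') \;=\; v(\epsilon) + \psi(v(\epsilon)) \;=\; (v(\epsilon))' \;\in\; (\Gamma^{>})'.
\]
But $\beta$ is a gap, which by the definition of a gap means $\Psi < \beta < (\Gamma^{>})'$, so in particular $\beta \notin (\Gamma^{>})'$. This contradicts $v(b') = \beta$, and the lemma follows.

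There is no real obstacle here; the only thing worth double-checking is that one handles the $c$-part correctly (which is immediate since $C \subseteq \mathcal{O}$ and $c' = 0$), and that one correctly identifies where in the trichotomy $\Psi < \beta < (\Gamma^{>})'$ the value $v(\epsilon')$ lies. The whole point is that in a $\d$-valued field every element asymptotic to $1$ has its derivative realized as the derivative of an infinitesimal, which automatically lands its valuation in $(\Gamma^{>})'$ and thus strictly above any gap.
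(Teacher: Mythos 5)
Your proof is correct and follows essentially the same route as the paper: decompose $b = c + \epsilon$ using $\mathcal{O} = C + \smallo$, observe $b' = \epsilon'$ with $\epsilon \neq 0$, and conclude $v(b') = (v(\epsilon))' \in (\Gamma^{>})'$, which is incompatible with $\beta$ being a gap. The only difference is cosmetic: you make explicit the check that $\epsilon \neq 0$ and the use of the asymptotic couple identity, which the paper leaves implicit.
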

\begin{proof}
Let $K$ be a $\d$-valued field and consider $\beta\in\Gamma$. Suppose that there is $b\asymp 1$ in $K$ such that $v(b') = \beta$. Then there is $c\in C^{\times}$ and $\epsilon\prec 1$ in $K^{\times}$ such that $b = c+\epsilon$ and thus $v(b') = v(\epsilon') = \beta\in (\Gamma^{>})'$. In particular, $\beta$ is not a gap.
\end{proof}

\begin{cor}
\label{dvKresults}
The differential-valued hull of $K$ has the following properties:
\begin{enumerate}
\item If $K$ is grounded, then $\dv(K)$ is grounded.
\item If $K$ has a fake gap, then $\dv(K)$ is grounded.
\item If $K$ has a true gap, then $\dv(K)$ has a true gap.
\item If $K$ has asymptotic integration and is not $\upl$-free, then $\dv(K)$ has asymptotic integration and is not $\upl$-free.
\item If $K$ is $\upl$-free, then $\dv(K)$ is $\upl$-free.
\end{enumerate}
\end{cor}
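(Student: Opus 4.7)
Parts (1)--(3) follow from standard structural properties of the differential-valued hull that can be extracted from~\cite[\S 10.3]{adamtt} together with the lemma immediately preceding this corollary (which states that any gap in a $\d$-valued field of $H$-type is a true gap), and part (5) is immediate from Theorem~\ref{dvKuplfree}. In outline: for (1), adjoining only constants to a grounded $K$ preserves $\max \Psi$; for (2), a witness $b\asymp 1$ of a fake gap $\beta$ in $K$ becomes $b = c + \epsilon$ in $\dv(K)$ with $c\in C_{\dv(K)}$ and $\epsilon\prec 1$ satisfying $v(\epsilon') = \beta$, so $\beta$ is no longer a gap, and the new $\psi$-value $\psi(v(\epsilon))$ becomes the max, making $\dv(K)$ grounded; for (3), no such witness exists in $K$, $\dv(K)$ shares the asymptotic couple of $K$, $\beta$ remains a gap in $\dv(K)$, and the preceding lemma forces $\beta$ to be true there.

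The main new content is part (4). The plan is to mimic the proof of Theorem~\ref{dvKuplfree}. Since $K$ has asymptotic integration, ADH~\ref{ACtrichotomy} gives $(\Gamma_K^{\neq})' = \Gamma_K$, so every nonzero element of $\Gamma_K$ lies in $(\Gamma_K^{>})'$ or $(\Gamma_K^{<})'$. I would iteratively adjoin integrals via the underlying ADH results for small and big integration (namely ADH~\ref{ADHsmallint} and its analogue~\cite[10.2.6]{adamtt}), along with henselization (ADH~\ref{uplfreehens}) and directed unions, obtaining an immediate henselian $H$-asymptotic extension $L\supseteq K$ with small integration. By Lemma~\ref{predsmallintdv}, $L$ is $\d$-valued, and the universal property ADH~\ref{dvhullexists} then produces an embedding $\dv(K)\hookrightarrow L$ over $K$. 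Because $K\subseteq \dv(K)\subseteq L$ with $L$ immediate over $K$, $\dv(K)$ is itself immediate over $K$, so it shares the asymptotic couple of $K$ and hence has asymptotic integration. Finally, since $K$ is not $\upl$-free, ADH~\ref{uplfreeprop} gives some $\upl\in K$ with $\upl_\rho\leadsto\upl$; as $\dv(K)$ has the same value group and $\Psi$-set as $K$, the same $\upl$-sequence works in $\dv(K)$ and $\upl\in\dv(K)$ remains a pseudolimit of it, so $\dv(K)$ is not $\upl$-free.

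The main obstacle is the following subtlety in (4): the argument of Theorem~\ref{dvKuplfree} formally invokes Propositions~\ref{lambdafreesmallint} and~\ref{lambdafreebigint}, which assume the base field is $\upl$-free. In the not-$\upl$-free case I must instead work with the weaker underlying ADH results, which only provide immediate $H$-asymptotic extensions without any $\upl$-freeness hypothesis or conclusion. These weaker results suffice because preservation of $\upl$-freeness is irrelevant when $\upl$-freeness already fails; only immediateness of the integral-adjunction extensions is needed, and this is available without the $\upl$-freeness assumption.
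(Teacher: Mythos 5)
Your argument is correct, but it takes a different route from the paper for items (1)--(4): the paper disposes of all four by a bare citation of~\cite[10.3.2]{adamtt} (and of (5) by Theorem~\ref{dvKuplfree}, exactly as you do), whereas you reconstruct the arguments, most substantially for (4). Your proof of (4) is sound: stripping the $\upl$-freeness bookkeeping out of the tower construction of Theorem~\ref{dvKuplfree} and using only the underlying extension results (henselization, ADH~\ref{ADHsmallint}, directed unions) does yield an immediate henselian pre-$\d$-valued extension $L\supseteq K$ with $\I(L)=\der\smallo_L$; Lemma~\ref{predsmallintdv} makes $L$ $\d$-valued, the universal property ADH~\ref{dvhullexists} wedges $\dv(K)$ between $K$ and $L$, and immediateness then transfers both asymptotic integration and the pseudolimit $\upl$ of the $\upl$-sequence (via the pseudolimit characterization in ADH~\ref{uplfreeprop}, which applies since asymptotic integration forces $K$ to be ungrounded with $\Gamma\neq\{0\}$, so failure of $\upl$-freeness really does produce such a $\upl\in K$). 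What your approach buys is a self-contained derivation of (4) inside the machinery of this paper, together with the useful byproduct that $\dv(K)$ is an \emph{immediate} extension of $K$ whenever $K$ has asymptotic integration; what the paper's citation buys is brevity, since~\cite[10.3.2]{adamtt} already packages (1)--(4). Your sketches for (1)--(3) are somewhat heuristic (e.g.\ ``adjoining only constants preserves $\max\Psi$'' elides the fact that the value group of $\dv(K)$ can genuinely grow, as your own case (2) shows), but since you explicitly lean on~\cite[\S 10.3]{adamtt} for these, that is acceptable as a citation-level argument.
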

\begin{proof}
(1)-(4) is a restatement of~\cite[10.3.2]{adamtt}. (5) is Theorem~\ref{dvKuplfree}.
\end{proof}

\subsection{The $H$-field hull of a pre-$H$-field}\emph{In this subsection we further assume that $K$ is equipped with an ordering making it a pre-$H$-field.}

\begin{ADH}\cite[10.5.13]{adamtt}
\label{Hfieldhullexists}
A unique field ordering on $\dv(K)$ makes $\dv(K)$ a pre-$H$-field extension of $K$. Let $H(K)$ be $\dv(K)$ equipped with this ordering. Then $H(K)$ is an $H$-field and embeds uniquely over $K$ into any $H$-field extension of $K$.
\end{ADH}

\noindent
The $H$-field $H(K)$ in ADH~\ref{Hfieldhullexists} above is called the \textbf{$H$-field hull of $K$}. We have the following $H$-field analogues of Theorem~\ref{dvKuplfree} and Corollary~\ref{dvKresults}:

\begin{cor}
\label{HKuplfree}
If $K$ is $\upl$-free, then $H(K)$ is $\upl$-free.
\end{cor}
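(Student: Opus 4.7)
The plan is to observe that the proof is essentially immediate from Theorem~\ref{dvKuplfree}. By ADH~\ref{Hfieldhullexists}, $H(K)$ is by definition the differential-valued hull $\dv(K)$ equipped with a particular field ordering making it a pre-$H$-field extension of $K$. So the underlying $H$-asymptotic field (its value group, valuation, and derivation) of $H(K)$ coincides with that of $\dv(K)$.

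Now the property of $\upl$-freeness, as set out in the definition following ADH~\ref{uplfreeprop}, is a property of $H$-asymptotic fields and makes no reference to an ordering. Thus $H(K)$ is $\upl$-free as a pre-$H$-field if and only if $\dv(K)$ is $\upl$-free as an $H$-asymptotic field.

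So the plan is: forget the ordering on $K$, viewing it merely as a pre-$\d$-valued field of $H$-type (which it is, since pre-$H$-field implies this), apply Theorem~\ref{dvKuplfree} to conclude that $\dv(K)$ is $\upl$-free, and then reintroduce the unique ordering from ADH~\ref{Hfieldhullexists} to obtain $H(K)$, which remains $\upl$-free since the underlying $H$-asymptotic structure is unchanged.

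There is no real obstacle here; all of the substantive work has already been done in Theorem~\ref{dvKuplfree}, whose proof combined ADH~\ref{uplfreehens}, Proposition~\ref{lambdafreesmallint}, Lemma~\ref{uplfreedirectedunion}, and Lemma~\ref{uplfreegoingdown} to build an immediate henselian $\upl$-free $H$-asymptotic extension of $K$ with small integration, inside which $\dv(K)$ is identified. The corollary is simply the observation that the ordering added to obtain $H(K)$ is invisible to $\upl$-freeness.
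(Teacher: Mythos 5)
Your proposal is correct and matches the paper's (implicit) argument: the paper states Corollary~\ref{HKuplfree} without proof precisely because $H(K)$ is $\dv(K)$ with an ordering added, and $\upl$-freeness is a property of the underlying $H$-asymptotic field that ignores the ordering, so Theorem~\ref{dvKuplfree} applies directly.
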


\begin{cor}
\label{HKresults}
The $H$-field hull of $K$ has the following properties:
\begin{enumerate}
\item If $K$ is grounded, then $H(K)$ is grounded.
\item If $K$ has a fake gap, then $H(K)$ is grounded.
\item If $K$ has a true gap, then $H(K)$ has a true gap.
\item If $K$ has asymptotic integration and is not $\upl$-free, then $H(K)$ has asymptotic integration and is not $\upl$-free.
\item If $K$ is $\upl$-free, then $H(K)$ is $\upl$-free.
\end{enumerate}
\end{cor}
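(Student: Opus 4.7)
The plan is to reduce everything to Corollary~\ref{dvKresults}. By ADH~\ref{Hfieldhullexists}, $H(K)$ and $\dv(K)$ have the \emph{same} underlying valued differential field: they differ only in that $H(K)$ is equipped with a distinguished field ordering making it a pre-$H$-field extension of $K$. Consequently, the asymptotic couple of $H(K)$ equals the asymptotic couple of $\dv(K)$, so properties like \emph{grounded}, \emph{having a gap}, \emph{having a true/fake gap}, and \emph{having asymptotic integration} transfer verbatim from $\dv(K)$ to $H(K)$. Likewise, $\upl$-freeness is a property of the asymptotic couple together with the logarithmic derivative structure on $K^\times$, which is unchanged by the choice of ordering.

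Concretely, for (1)-(4) I would argue: each of these hypotheses on $K$ is a property of the underlying pre-$\d$-valued field of $H$-type, so Corollary~\ref{dvKresults}(1)-(4) applied to this underlying field yields the corresponding conclusion for $\dv(K)$, and hence for $H(K)$ since it has the same valued differential field structure. Note that in (2) and (3) the condition on $b$ (whether or not $b \asymp 1$ with $v(b') = \beta$) is expressed purely in terms of the valuation and the derivation, so the distinction \emph{true gap} vs.~\emph{fake gap} is preserved.

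For (5), I would simply invoke Corollary~\ref{HKuplfree}, which is stated immediately before Corollary~\ref{HKresults}. Corollary~\ref{HKuplfree} in turn is immediate from Theorem~\ref{dvKuplfree}: if $K$ is $\upl$-free, then $\dv(K)$ is $\upl$-free, and since $H(K)$ has the same underlying valued differential field as $\dv(K)$, $H(K)$ is $\upl$-free as well.

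There is no genuine obstacle; the corollary is a bookkeeping statement rather than a substantive theorem, and essentially every clause reduces to the observation that $\dv(K)$ and $H(K)$ coincide as valued differential fields, together with the previously established Corollary~\ref{dvKresults}. The only mild care needed is to confirm that each property invoked in (1)-(5) really is a property of the underlying valued differential field (independent of the field ordering), but this is straightforward in each case from the definitions given in \S\ref{AsymptoticCouples}, \S\ref{DFDVFHF}, and \S\ref{uplfreenesssection}.
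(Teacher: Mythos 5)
Your proposal is correct and is precisely the argument the paper intends: the paper omits the proof of Corollary~\ref{HKresults} exactly because, as you observe, $H(K)$ and $\dv(K)$ coincide as valued differential fields by ADH~\ref{Hfieldhullexists}, and every property in (1)--(5) depends only on that underlying structure, so everything follows from Corollary~\ref{dvKresults} and Theorem~\ref{dvKuplfree}. Your extra check that ``true gap'' versus ``fake gap'' is ordering-independent is the right point of mild care, and it goes through as you say.
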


\section{The integration closure}
\label{integrationclosure}
\noindent
\emph{In this section let $K$ be a $\d$-valued field of $H$-type with asymptotic integration.}

\begin{ADH}\cite[10.2.7]{adamtt}
\label{intclosureexists}
$K$ has an immediate asymptotic extension $K(\int)$ such that:
\begin{enumerate}
\item $K(\int)$ is henselian and has integration;
\item $K(\int)$ embeds over $K$ into any henselian $\d$-valued $H$-asymptotic extension of $K$ that has integration.
\end{enumerate}
Furthermore, given any such $K(\int)$ with the above properties, the only henselian asymptotic subfield of $K(\int)$ containing $K$ and having integration is $K(\int)$.
\end{ADH}

\begin{thm}
\label{uplfreeintegrationclosure}
If $K$ is $\upl$-free, then so is $K(\int)$.
\end{thm}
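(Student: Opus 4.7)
The plan is to construct $K(\int)$ as a directed union of henselian $\upl$-free $\d$-valued $H$-asymptotic extensions, built by iteratively adjoining integrals using the results of \S\ref{smallintsection} and \S\ref{bigintsection}, and invoking the universal property of $K(\int)$ at the end. First I would replace $K$ by its henselization $K^{\text{h}}$: by ADH~\ref{uplfreehens} it remains $\upl$-free, and as an immediate extension it preserves the $\d$-valued $H$-type structure and asymptotic integration; moreover, since $K(\int)$ is henselian and contains $K$, it contains $K^{\text{h}}$.

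The central step is to show that for any $s \in K \setminus \der K$ with $K$ henselian, $\d$-valued, of $H$-type, $\upl$-free, and with asymptotic integration, I can adjoin an integral of $s$ while preserving all of these properties. Since $K$ has asymptotic integration, $\Gamma = (\Gamma^{\neq})' = (\Gamma^{<})' \sqcup (\Gamma^{>})'$ (the disjointness coming from ADH~\ref{derivativestrictlyincreasing}(1)). This gives a clean dichotomy:
\begin{enumerate}
\item[(A)] If there is $a \in K$ with $v(s - a') \in (\Gamma^{>})'$, set $t := s - a'$. Then $t \notin \der\smallo$ (otherwise $t = \epsilon'$ with $\epsilon \prec 1$ gives $s = (a+\epsilon)' \in \der K$, contradiction), so Proposition~\ref{lambdafreesmallint} applies and $K(y)$ with $y' = t$ is pre-$\d$-valued, $H$-asymptotic, immediate, and $\upl$-free, with $\int s = a + y \in K(y)$.
\item[(B)] Otherwise $S := \{v(s-a') : a \in K\} \subseteq (\Gamma^{<})'$ and Proposition~\ref{lambdafreebigint} gives that $K(y)$ with $y' = s$ is pre-$\d$-valued, $H$-asymptotic, immediate, and $\upl$-free.
\end{enumerate}
In either case the extension is immediate over $K$, hence inherits $\d$-valuedness and asymptotic integration; taking its henselization by ADH~\ref{uplfreehens} restores henselianity without losing $\upl$-freeness.

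I would then iterate this construction by transfinite recursion: at successor stages pick any element of the current field that lacks an integral and perform the step above; at limit stages take the union and apply Lemma~\ref{uplfreedirectedunion} to retain $\upl$-freeness (henselianity and asymptotic integration pass through unions as well). This stabilizes at a henselian, $\upl$-free, $\d$-valued $H$-asymptotic extension $L$ of $K$ that is immediate over $K$ and has integration. By the universal property in ADH~\ref{intclosureexists}, $K(\int)$ embeds over $K$ into $L$, and since both are immediate over $K$ one has $\Psi_{K(\int)} = \Psi_K = \Psi_L$, so $\Psi_{K(\int)}$ is cofinal in $\Psi_L$; ADH~\ref{uplfreegoingdown} then transfers $\upl$-freeness from $L$ down to $K(\int)$.

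The main obstacle I anticipate is verifying the dichotomy in the key step cleanly — in particular, checking that case (A) really does feed into the hypotheses of Proposition~\ref{lambdafreesmallint} (the short calculation ruling out $t \in \der\smallo$) and that case (B) covers everything not handled by (A). Once the dichotomy is in place, the rest is bookkeeping: confirming that immediacy, $\d$-valuedness, henselianity and asymptotic integration propagate through successor and limit stages, and that the resulting union is what one needs to hit the universal property of $K(\int)$.
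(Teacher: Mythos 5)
Your proposal is correct and follows essentially the same route as the paper, which simply says ``by iterating Lemma~\ref{uplfreedirectedunion}, ADH~\ref{uplfreehens}, and Propositions~\ref{lambdafreesmallint} and~\ref{lambdafreebigint}, we obtain a $\upl$-free $\d$-valued immediate henselian $H$-asymptotic extension $L$ of $K$ with integration,'' then identifies $K(\int)$ with a subfield of $L$ via ADH~\ref{intclosureexists} and applies ADH~\ref{uplfreegoingdown}. Your dichotomy (A)/(B), deciding via asymptotic integration whether to invoke the small-integration or big-integration proposition, is exactly the content the paper leaves implicit in ``iterating,'' and your verifications (immediacy, $\d$-valuedness, re-henselization) are all sound.
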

\begin{proof}
By iterating Lemma~\ref{uplfreedirectedunion}, ADH~\ref{uplfreehens}, and Propositions~\ref{lambdafreesmallint} and~\ref{lambdafreebigint}, we obtain a $\upl$-free $\d$-valued immediate $H$-asymptotic extension $L$ of $K$ that is henselian and has integration. By ADH~\ref{intclosureexists}, $K(\int)$ can be identified with a subfield of $L$ which contains $K$. Finally, by ADH~\ref{uplfreegoingdown}, $K(\int)$ is also $\upl$-free.
\end{proof}

\section{The number of Liouville closures}
\label{LiouvilleClosures}

\noindent
\emph{In this section let $K$ be a pre-$H$-field.}
$K$ is said to be \textbf{Liouville closed} if it is a real closed $H$-field with integration and exponential integration.
A \textbf{Liouville closure} of $K$ is a Liouville closed $H$-field extension of $K$ which is also a Liouville extension of $K$.

\begin{thm}
\label{1or2LClosures}
Let $K$ be an $H$-field. Then $K$ has at least one and at most two Liouville closures up to isomorphism over $K$. In particular,
\begin{enumerate}
\item $K$ has exactly one Liouville closure up to isomorphism over $K$ iff 
\begin{enumerate}
\item $K$ is grounded, or
\item $K$ is $\upl$-free.
\end{enumerate}
\item $K$ has exactly two Liouville closures up to isomorphism over $K$ iff
\begin{enumerate}
\setcounter{enumii}{2}
\item $K$ has a gap, or
\item $K$ has asymptotic integration and is not $\upl$-free.
\end{enumerate}
\end{enumerate}
\end{thm}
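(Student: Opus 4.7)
The plan is to separate cases based on the asymptotic-couple trichotomy and reduce as much as possible to the classical dichotomy of \cite{MZ}, which already establishes case (1)(a) for grounded $H$-fields and case (2)(c) for $H$-fields with a gap. Existence of at least one Liouville closure is a standard transfinite construction: iteratively adjoin real closures, integrals, and exponential integrals, taking directed unions at limits. So the real content of the theorem is establishing cases (1)(b) and (2)(d).

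For case (1)(b), $K$ is $\upl$-free with asymptotic integration. I would carry out the standard Liouville-closure construction while verifying at every step that $\upl$-freeness is preserved, and then use this preservation as the engine of uniqueness. Each atomic extension step preserves $\upl$-freeness: henselization by ADH~\ref{uplfreehens}; algebraic and real closure by ADH~\ref{uplfreeacl} and Lemma~\ref{uplfreerc}; adjunction of an integral by Proposition~\ref{lambdafreesmallint} when $v(s)\in(\Gamma^{>})'$ and by Proposition~\ref{lambdafreebigint} otherwise; adjunction of an exponential integral by Proposition~\ref{lambdafreesmallexpint} when some $s-a^{\dagger}$ has valuation in $(\Gamma^{>})'$ and by ADH~\ref{uplfreebigexpint} when $\{v(s-a^{\dagger}):a\in K^{\times}\}\subseteq \Psi^{\downarrow}$; and directed unions by Lemma~\ref{uplfreedirectedunion}. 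For uniqueness, given two Liouville closures $L_1$ and $L_2$, I would build an isomorphism over $K$ by Zorn's lemma on partial isomorphisms, extending across each of the atomic operations above. The key point is that $\upl$-freeness prevents the fork in the road of Remark~\ref{gapremark} (by ADH~\ref{nogapcreator} no gap is ever created), so the sign of an adjoined exponential integral is forced by the pre-$H$-field ordering of ADH~\ref{uplfreebigexpint}, and integrals in $L_2$ matching an integral in $L_1$ can be chosen consistently using Lemma~\ref{asympdiffeqlemma} applied to $y'' = \ell y'$ with $\ell$ the relevant logarithmic derivative.

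For case (2)(d), $K$ has asymptotic integration but fails $\upl$-freeness. I would produce a Liouville extension of $K$ that itself has a gap and then invoke the classical gap case. Pass to $K^{\text{rc}}$: this is a real closed $H$-field with divisible value group, still has asymptotic integration (divisible hull and algebraic extensions preserve both ``not grounded'' and ``no gap'' by Definition~\ref{divisiblehulldef}), and is still not $\upl$-free by ADH~\ref{uplfreegoingdown} applied contrapositively, since $\Psi_{K^{\text{rc}}} = \Psi$. The failure of $\upl$-freeness gives some $\upl \in K^{\text{rc}}$ with $\upl_\rho \leadsto \upl$, and ADH~\ref{gapcreatorlemma} then produces an $H$-field extension $K^{\text{rc}}(f)$ with $f^{\dagger} = -\upl$, $f>0$, in which $vf$ is a gap. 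The classical dichotomy of \cite{MZ} applied to this gap-bearing $H$-field yields exactly two Liouville closures over $K^{\text{rc}}(f)$, and each is a Liouville closure of $K$. Their non-isomorphism over $K$ is the usual fork-in-the-road distinction, detected at the level of $K^{\text{rc}}(f)$ by the sign of the asymptotic integral of $vf$ and transferred back to $K$ via Lemma~\ref{asympdiffeqlemma} (which turns this sign into an asymptotic invariant of the iterated integral).

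The main obstacle I expect is the uniqueness half of case (1)(b). Extending a partial isomorphism across algebraic, small integral, and small exponential integral steps is essentially forced by uniqueness of immediate extensions. The delicate step is the big exponential integration step, where one must simultaneously match up a multiplicative constant from $C$ and an overall sign on both sides; this is exactly where $\upl$-freeness does its work, ruling out the fork in the road via ADH~\ref{uplfreebigexpint} and letting Lemma~\ref{asympdiffeqlemma} pin down the correspondence. A secondary subtlety is confirming that case (2)(d) yields at most two (not more) closures: any Liouville closure of $K$ contains a copy of $K^{\text{rc}}$ and hence of some gap-producing extension $K^{\text{rc}}(f)$, and the classical result bounds the count there by two.
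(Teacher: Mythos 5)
Your overall architecture matches the paper's: existence and the grounded/gap cases are delegated to the classical results of~\cite{MZ} (i.e.\ \cite[10.6.23, 10.6.25, 10.6.12]{adamtt}), the $\upl$-free case is handled by running a Liouville tower and checking that each atomic step preserves $\upl$-freeness via exactly the lemmas you list, and the non-$\upl$-free case with asymptotic integration is handled by manufacturing a gap via ADH~\ref{gapcreatorlemma} and separating the two closures with Lemma~\ref{asympdiffeqlemma} applied to $y''=-\upl y'$. The one genuine divergence is in the uniqueness half of case (1)(b): the paper does not run a Zorn's-lemma back-and-forth on partial isomorphisms. Instead it uses the prepackaged criterion ADH~\ref{LTFacts}(\ref{LTFactsNoGap}) (= \cite[10.6.17]{adamtt}): if a Liouville tower has no gap at any stage below the top and its top is Liouville closed, that top is \emph{the} Liouville closure up to isomorphism over $K$. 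So the only new work needed is the gap-freeness of the tower, which is precisely what the preservation results supply. Your back-and-forth would essentially re-derive \cite[10.6.17]{adamtt}; it is not wrong, but it is redundant given the available citations, and the delicate matching of constants and signs that you flag as the ``main obstacle'' is exactly what that cited result already absorbs.

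There is one concrete error in your case (2)(d): you assert that $K^{\text{rc}}$ ``still has asymptotic integration'' because algebraic extensions preserve ``not grounded'' and ``no gap.'' That is false in general. Definition~\ref{divisiblehulldef} preserves groundedness and gaps under passage to $\Q\Gamma$, but a $K$ with asymptotic integration that lacks \emph{rational} asymptotic integration acquires a gap in $K^{\text{rc}}$ (and non-$\upl$-free $K$ need not have rational asymptotic integration --- by ADH~\ref{nogapcreator} that implication only goes the other way). The paper therefore splits case (2)(d) into three subcases: if $K$ lacks rational asymptotic integration, then $K^{\text{rc}}$ already has a gap and \cite[10.6.25]{adamtt} applies directly; if $K$ is real closed, the gap-creator argument you describe runs (note the paper also records that $C_L=C$ for any Liouville closure $L$ here, which is needed to apply Lemma~\ref{asympdiffeqlemma} to elements of $L_i\setminus C$); and the remaining case reduces to the real closed one via uniqueness of real closures. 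Your argument goes through once you insert this case distinction, so the gap is easily repaired, but as written the step ``pass to $K^{\text{rc}}$ and apply ADH~\ref{gapcreatorlemma}'' fails when rational asymptotic integration is absent, since that lemma requires asymptotic integration in the real closed field.
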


\noindent
Theorem~\ref{1or2LClosures} will follow from the following Proposition, whose proof we delay until later in the section:

\begin{prop}
\label{uplfreeLClosures}
Let $K$ be an $H$-field.
\begin{enumerate}
\item If $K$ is $\upl$-free, then $K$ has exactly one Liouville closure up to isomorphism over $K$.
\item If $K$ has asymptotic integration and is not $\upl$-free, then $K$ has at least two Liouville closures up to isomorphism over $K$.
\end{enumerate}
\end{prop}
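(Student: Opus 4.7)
My plan is to prove the two parts separately, leaning in part (1) on the $\upl$-freeness preservation results of Sections~\ref{smallexpintsection}--\ref{bigintsection} together with an embedding argument, and in part (2) on constructing two non-isomorphic Liouville closures via the gap-creator ADH~\ref{gapcreatorlemma} and detecting their distinctness with Lemma~\ref{asympdiffeqlemma}.

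For part (1), I would argue that a Liouville closure of a $\upl$-free $H$-field $K$ is assembled by iterating algebraic extensions (in particular real closures) and adjunctions of small and big integrals and exponential integrals, each of which preserves $\upl$-freeness: Lemma~\ref{uplfreerc} for real closures, Propositions~\ref{lambdafreesmallexpint},~\ref{lambdafreesmallint}, and~\ref{lambdafreebigint} for the small exponential integral, small integral, and big integral cases, and ADH~\ref{uplfreebigexpint} for big exponential integration. Thus every intermediate stage is $\upl$-free, so by ADH~\ref{nogapcreator} no stage contains a gap-creating element and the ``fork in the road'' of Remark~\ref{gapremark} never opens. I would then deduce uniqueness of the Liouville closure up to isomorphism over $K$ via a back-and-forth argument: given two Liouville closures $M_1, M_2 \supseteq K$, one inductively builds an isomorphism $M_1 \to M_2$ by repeatedly selecting the next element of $M_1$ and extending the current partial isomorphism into $M_2$; at each step the extension is canonical because the relevant adjunction yields a unique pre-$H$-field extension up to isomorphism over the current subfield, precisely because no gap ever needs to be filled.

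For part (2), since $K$ has asymptotic integration and is not $\upl$-free, ADH~\ref{uplfreeprop} yields $\upl \in K$ with $\upl_\rho \leadsto \upl$. Passing to $K^{\text{rc}}$, which is still not $\upl$-free (its $\Psi$-set coincides with that of $K$) and still has asymptotic integration, I invoke ADH~\ref{gapcreatorlemma} to produce an $H$-field extension $L := K^{\text{rc}}(f)$ with $f > 0$, $f^{\dagger} = -\upl$, and $vf$ a gap in $L$. The main result of~\cite{MZ} for $H$-fields with a gap supplies two non-isomorphic Liouville closures $M_1, M_2$ of $L$, which differ in the size of an integral $y_i \in M_i$ satisfying $y_i' = f$: in one closure $y_i \prec 1$, in the other $y_i \succ 1$. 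I would then take Liouville closures $N_i \subseteq M_i$ of $K$, noting that each $N_i$ must contain a positive exponential integral of $-\upl \in K$ (and hence a copy of $f$) together with an integral of that element, so the size dichotomy is inherited from $M_i$ to $N_i$.

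To show $N_1 \not\cong N_2$ over $K$, I would exploit that both $y_1, y_2$ satisfy the second-order linear ODE $y'' = -\upl y'$ with coefficient $-\upl \in K$. Any isomorphism $\phi: N_1 \to N_2$ over $K$ would send $y_1$ to a nonconstant solution $\phi(y_1) \in N_2$ of the same equation, with the same valuation as $y_1$, hence with $\phi(y_1) \prec 1$. Lemma~\ref{asympdiffeqlemma}, applied to $\phi(y_1)$ and $y_2$ inside the asymptotic field $N_2$, then forces $\phi(y_1) \succ 1 \iff y_2 \succ 1$, contradicting $\phi(y_1) \prec 1$ and $y_2 \succ 1$. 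The main obstacle I anticipate lies in part (1): assembling a back-and-forth embedding theorem uniform enough to handle every adjunction type at once and verifying that the $\upl$-freeness propagation in Sections~\ref{smallexpintsection}--\ref{bigintsection} leaves no residual choice in any extension step, so that partial isomorphisms always extend canonically.
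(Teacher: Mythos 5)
Your overall strategy matches the paper's: part (1) via a gap-free tower of $\upl$-freeness-preserving extensions, part (2) via the gap creator and the second-order equation $y''=-\upl y'$ detected by Lemma~\ref{asympdiffeqlemma}. But there are two genuine gaps.

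In part (2), your claim that $K^{\text{rc}}$ ``still has asymptotic integration'' is false in general: the asymptotic couple of $K^{\text{rc}}$ is the divisible hull of that of $K$, so $K^{\text{rc}}$ has asymptotic integration only when $K$ has \emph{rational} asymptotic integration. When $K$ has asymptotic integration but not rational asymptotic integration (which is possible for a non-$\upl$-free $K$; indeed by ADH~\ref{nogapcreator} such a $K$ is automatically not $\upl$-free), $K^{\text{rc}}$ has a gap, and ADH~\ref{gapcreatorlemma} cannot be applied to it -- its hypotheses include asymptotic integration, and one also needs $\upl_\rho\leadsto\upl$ in the real closure, which you have not checked there. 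The paper isolates this as a separate (and easier) case: if $K^{\text{rc}}$ has a gap, one appeals directly to~\cite[10.6.25]{adamtt} to get two non-isomorphic Liouville closures, with no need for the gap creator. Your argument as written would fail at the invocation of ADH~\ref{gapcreatorlemma} in exactly this case. A second, smaller point: the detour through sub-closures $N_i\subseteq M_i$ is unnecessary and adds an unverified transfer step, since $K^{\text{rc}}(f)$ is already a Liouville extension of $K$ and hence $M_1,M_2$ are themselves Liouville closures of $K$; the paper works with them directly.

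In part (1), the uniqueness step is not actually proved: you defer it to a ``back-and-forth embedding theorem uniform enough to handle every adjunction type at once'' and correctly flag it as the main obstacle. That theorem is precisely ADH~\ref{LTFacts}(\ref{LTFactsNoGap}) (i.e.,~\cite[10.6.17]{adamtt}): if a Liouville tower on $K$ has no gap at any proper stage and its top is Liouville closed, then that top is the unique Liouville closure up to isomorphism over $K$. The paper's proof consists of exhibiting such a tower -- a maximal $\{\text{(a),(e),(f),(g),(h)}\}$-tower, every stage of which is $\upl$-free by Lemmas~\ref{uplfreedirectedunion} and~\ref{uplfreerc}, Propositions~\ref{lambdafreesmallexpint},~\ref{lambdafreesmallint},~\ref{lambdafreebigint}, and ADH~\ref{uplfreebigexpint} -- and then citing that result; note also that you need Lemma~\ref{uplfreedirectedunion} at limit stages, and maximality of the restricted tower to conclude the top is Liouville closed. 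So the idea you identify as the hard part is available off the shelf, but as written your part (1) is incomplete without either citing it or supplying the embedding lemmas for each extension type.
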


\begin{proof}[Proof of Theorem~\ref{1or2LClosures} assuming Proposition~\ref{uplfreeLClosures}]
It is clear that $K$ will be in case (a), (b), (c) or (d), and all four cases are mutually exclusive. If $K$ is in case (a), then $K$ has exactly one Liouville closure up to isomorphism over $K$, by~\cite[10.6.23]{adamtt}. If $K$ is in case (c), then $K$ has exactly two Liouville closures up to isomorphism over $K$, by~\cite[10.6.25]{adamtt}. Cases (b) and (d) are taken care of by Proposition~\ref{uplfreeLClosures} and~\cite[10.6.12]{adamtt}.
\end{proof}

\noindent
In general, a pre-$H$-field which is not also an $H$-field might not have any Liouville closures at all. For instance, the pre-$H$-field $L$ from Example~\ref{Hhulltranscendental} cannot have any Liouville closures: a Liouville closure of $L$ would necessarily contain $H(L)$, but $H(L)$ cannot be contained inside any Liouville extension of $L$ because $C_{H(L)}$ is not an algebraic extension of $C_L = \R$. In such a situation, the next best thing is to consider Liouville closures of the $H$-field hull:

\begin{cor}
\label{1or2LClosuresHK}
Let $K$ be a pre-$H$-field. Then $H(K)$ has at least one and at most two Liouville closures up to isomorphism over $K$. In particular,
\begin{enumerate}
\item $H(K)$ has exactly one Liouville closure up to isomorphism over $K$ iff 
\begin{enumerate}
\item $K$ is grounded, or
\item $K$ has a fake gap, or
\item $K$ is $\upl$-free.
\end{enumerate}
\item $H(K)$ has exactly two Liouville closures up to isomorphism over $K$ iff
\begin{enumerate}
\setcounter{enumii}{3}
\item $K$ has a true gap, or
\item $K$ has asymptotic integration and is not $\upl$-free.
\end{enumerate}
\end{enumerate}
\end{cor}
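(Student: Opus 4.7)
The plan is to reduce Corollary~\ref{1or2LClosuresHK} directly to Theorem~\ref{1or2LClosures} applied to the $H$-field $H(K)$, combined with the translation table of Corollary~\ref{HKresults}. The whole argument should be essentially bookkeeping once one key observation is made.

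First I would argue that for any two Liouville closures $L_1, L_2$ of $H(K)$, being \emph{isomorphic over $K$} is the same as being \emph{isomorphic over $H(K)$}. The nontrivial direction uses the universal property in ADH~\ref{Hfieldhullexists}: $H(K)$ embeds uniquely over $K$ into every $H$-field extension of $K$. So if $\varphi\colon L_1\to L_2$ is an isomorphism over $K$, then $\varphi|_{H(K)}\colon H(K)\to L_2$ is an embedding of $H(K)$ into the $H$-field $L_2$ over $K$; by uniqueness it must agree with the given embedding $H(K)\hookrightarrow L_2$, so $\varphi$ fixes $H(K)$ pointwise. The reverse direction is obvious. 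Consequently, the count of Liouville closures of $H(K)$ up to isomorphism over $K$ equals the count up to isomorphism over $H(K)$.

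Next, since $H(K)$ is an $H$-field (ADH~\ref{Hfieldhullexists}), I can apply Theorem~\ref{1or2LClosures} to it: $H(K)$ has exactly one Liouville closure over $H(K)$ iff $H(K)$ is grounded or $\upl$-free, and exactly two iff $H(K)$ has a gap or has asymptotic integration but is not $\upl$-free. I would then translate each of the pre-$H$-field cases for $K$ into the corresponding statement for $H(K)$ via Corollary~\ref{HKresults}: cases (a) and (b) force $H(K)$ grounded, case (c) forces $H(K)$ $\upl$-free (so all three give one closure); case (d) forces $H(K)$ to have a true gap (hence a gap), and case (e) forces $H(K)$ to have asymptotic integration and fail $\upl$-freeness (so both give two closures). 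The five cases (a)--(e) are mutually exclusive and collectively exhaust the possibilities for $K$ by the trichotomy for $H$-asymptotic couples (ADH~\ref{ACtrichotomy}) applied to $K$ together with the refinement of the gap case into fake/true gaps.

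There is no real obstacle beyond correctly chaining these results; the one subtlety worth double-checking is the universal-property step, because that is what justifies counting closures of $H(K)$ over $K$ rather than over $H(K)$. Everything else is a direct appeal to prior results of the paper.
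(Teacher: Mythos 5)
Your proposal is correct and follows essentially the same route as the paper: reduce to Theorem~\ref{1or2LClosures} applied to $H(K)$ together with the case translation of Corollary~\ref{HKresults}, and upgrade ``isomorphic over $H(K)$'' to ``isomorphic over $K$'' via the universal property of the $H$-field hull in ADH~\ref{Hfieldhullexists}. Your write-up merely spells out the uniqueness-of-embedding argument that the paper leaves implicit, and it is correct as stated.
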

\begin{proof}
If we replace in the statement of Corollary~\ref{1or2LClosuresHK} all instances of ``up to isomorphism over $K$'' with ``up to isomorphism over $H(K)$'', then this would follow from Corollary~\ref{HKresults} and Theorem~\ref{1or2LClosures}. Now, to strengthen the statements to ``up to isomorphism over $K$'', use that $H(K)$ is determined up-to-unique-isomorphism in Proposition~\ref{Hfieldhullexists}.
\end{proof}

\subsection{Liouville towers}

\emph{In this subsection $K$ is an $H$-field.} The primary method of constructing Liouville closures of an $H$-field is with a \emph{Liouville tower}. A \textbf{Liouville tower on $K$} is a strictly increasing chain $(K_{\lambda})_{\lambda\leq\mu}$ of $H$-fields, indexed by the ordinals less than or equal to some ordinal $\mu$, such that
\begin{enumerate}
\item $K_0 = K$;
\item if $\lambda$ is a limit ordinal, $0<\lambda\leq\mu$, then $K_{\lambda} = \bigcup_{\iota<\lambda}K_{\iota}$;
\item for $\lambda<\lambda+1\leq\mu$, \emph{either}
\begin{enumerate}
\item $K_{\lambda}$ is not real closed and $K_{\lambda+1}$ is a real closure of $K_{\lambda}$,
\end{enumerate}
\emph{or} $K_{\lambda}$ is real closed, $K_{\lambda+1} = K_{\lambda}(y_{\lambda})$ with $y_{\lambda}\not\in K_{\lambda}$ (so $y_{\lambda}$ is transcendental over $K_{\lambda}$), and one of the following holds, with $(\Gamma_{\lambda},\psi_{\lambda})$ the asymptotic couple of $K_{\lambda}$ and $\Psi_{\lambda}:= \psi_{\lambda}(\Gamma_{\lambda}^{\neq})$:
\begin{enumerate}
\setcounter{enumii}{1}
\item $y_{\lambda}' = s_{\lambda}\in K_{\lambda}$ with $y_{\lambda}\prec 1$ and $v(s_{\lambda})$ is a gap in $K_{\lambda}$,
\item $y_{\lambda}' = s_{\lambda}\in K_{\lambda}$ with $y_{\lambda}\succ 1$ and $v(s_{\lambda})$ is a gap in $K_{\lambda}$,
\item $y_{\lambda}' = s_{\lambda}\in K_{\lambda}$ with $v(s_{\lambda}) = \max\Psi_{\lambda}$,
\item $y_{\lambda}' = s_{\lambda}\in K_{\lambda}$ with $y_{\lambda}\prec 1$, $v(s_{\lambda})\in (\Gamma_{\lambda}^{>})'$, and $s_{\lambda}\neq\epsilon'$ for all $\epsilon\in K_{\lambda}^{\prec 1}$,
\item $y_{\lambda}' = s_{\lambda}\in K_{\lambda}$ such that $S_{\lambda}:= \{v(s_{\lambda}-a'):a\in K_{\lambda}\}<(\Gamma_{\lambda}^{>})'$, and $S_{\lambda}$ has no largest element,
\item $y_{\lambda}^{\dagger} = s_{\lambda}\in K_{\lambda}$ with $y_{\lambda}\sim 1$, $v(s_{\lambda})\in (\Gamma_{\lambda}^{>})'$, and $s_{\lambda}\neq a^{\dagger}$ for all $a\in K_{\lambda}^{\times}$,
\item $y_{\lambda}^{\dagger} = s_{\lambda}\in K_{\lambda}^{<}$ with $y_{\lambda}>0$, and $v(s_{\lambda}-a^{\dagger})\in \Psi_{\lambda}^{\downarrow}$ for all $a\in K_{\lambda}^{\times}$.
\end{enumerate}
\end{enumerate}
The $H$-field $K_{\mu}$ is called the \textbf{top} of the tower $(K_{\lambda})_{\lambda\leq\mu}$.
We say that a Liouville tower $(K_{\lambda})_{\lambda\leq \mu}$ is \textbf{maximal} if it cannot be extended to a Liouville tower $(K_{\lambda})_{\lambda\leq\mu+1}$ on $K$. Given a Liouville tower $(K_{\lambda})_{\lambda\leq\mu}$ on $K$, $0\leq\lambda<\lambda+1\leq\mu$, we say $K_{\lambda+1}$ is an \textbf{extension of type} ($\ast$) for $(\ast)\in \{\text{(a)},\text{(b)},\ldots,\text{(h)}\}$ if $K_{\lambda+1}$ and $K_{\lambda}$ satisfy the properties of item ($\ast$) as in the definition of Liouville tower.

\begin{ADH}\label{LTFacts} Here are some facts about Liouville towers on $K$:
\begin{enumerate}
\item Let $(K_{\lambda})_{\lambda\leq\mu}$ be a Liouville tower on $K$, then:
\begin{enumerate}
\item $K_{\mu}$ is a Liouville extension of $K$;
\item the constant field $C_{\mu}$ of $K_{\mu}$ is a real closure of $C$ if $\mu>0$;
\item $|K_{\mu}|=|K|$, hence $\mu<|K|^+$.
\end{enumerate}
\item There is a maximal Liouville tower on $K$.
\item The top of a maximal Liouville tower on $K$ is Liouville closed, and hence a Liouville closure of $K$.
\item \label{LTFactsNoGap} If $(K_{\lambda})_{\lambda\leq\mu}$ is a Liouville tower on $K$ such that no $K_{\lambda}$ with $\lambda<\mu$ has a gap, and if $K_{\mu}$ is Liouville closed, then $K_{\mu}$ is  the unique Liouville closure of $K$ up to isomorphism over $K$.
\end{enumerate}
\end{ADH}
\begin{proof}
(1) is~\cite[10.6.13]{adamtt}, (2) follows from (1)(c), (3) is~\cite[10.6.14]{adamtt}, and (4) is~\cite[10.6.17]{adamtt}.
\end{proof}

\noindent
For a set $\Lambda\subseteq \{\text{(a)},\text{(b)},\ldots,\text{(h)}\}$ with $\text{(a)}\in\Lambda$, the definition of a \textbf{$\Lambda$-tower on $K$} is identical to that of \emph{Liouville tower on $K$}, except that in clause (3) of the above definition only the items from $\Lambda$ occur. Thus every $\Lambda$-tower on $K$ is also a Liouville tower on $K$.
Note that by Zorn's Lemma and ADH~\ref{LTFacts}(1)(c), maximal $\Lambda$-towers exist on $K$.

\begin{proof}[Proof of Proposition~\ref{uplfreeLClosures}] (1) Assume $K$ is $\upl$-free. By ADH~\ref{LTFacts}(\ref{LTFactsNoGap}), it suffices to find a Liouville tower $(K_{\lambda})_{\lambda\leq\mu}$ on $K$ such that $K_{\mu}$ is Liouville closed and no $K_{\lambda}$ with $\lambda<\mu$ has a gap. 
Take a maximal $\{\text{(a),(e),(f),(g),(h)}\}$-tower $(K_{\lambda})_{\lambda\leq\mu}$ on $K$.
By Lemmas~\ref{uplfreedirectedunion},~\ref{uplfreerc}, Propositions~\ref{lambdafreesmallexpint},~\ref{lambdafreesmallint},~\ref{lambdafreebigint} and ADH~\ref{uplfreebigexpint},  $K_{\lambda}$ is $\upl$-free for every $\lambda\leq\mu$. Thus no $K_{\lambda}$ with $\lambda<\mu$ has a gap. Finally, by maximality, it follows that $K_{\mu}$ is Liouville closed.

(2) Assume that $K$ has asymptotic integration and is not $\upl$-free. 
First consider the case that $K$ does not have rational asymptotic integration. 
Then $K_1 = K^{\text{rc}}$ has a gap. By~\cite[10.6.25]{adamtt} $K_1$ has two Liouville closures which are not isomorphic over $K_1$. As $K_1$ is a real closure of $K$, they are not isomorphic over $K$ either because the real closure is unique up-to-unique-isomorphism. Thus $K$ has at least two Liouville closures which are not isomorphic over $K$.

Next, consider the case that $K$ is real closed. 
In this case, if $L$ is a Liouville closure of $K$ then $C_L = C$ since $C$ is necessarily real closed. 
 As $K$ is not $\upl$-free, there is some $\upl\in K$ such that $\upl_{\rho}\leadsto\upl$. Next, let $K_1 = K(f)$ be the $H$-field extension from ADH~\ref{gapcreatorlemma}. Thus $f^{\dagger} = -\upl$ and $v(f)$ is a gap in $K_1$. Again by~\cite[10.6.25]{adamtt}, $K_1$ has two Liouville closures $L_1$ and $L_2$ which are not isomorphic over $K_1$. There is $\tilde{y}\in L_1^{\prec 1}$ such that $\tilde{y}' = f$ whereas every $y\in L_2$ such that $y' = f$ has the property that $y\succ 1$. Furthermore, as both $L_1$ and $L_2$ are Liouville closed, they both contain nonconstant elements $y$ such that $y''=-\upl y'$.

\begin{claimunnumbered}
If $y\in L_1\setminus C$ is such that $y''=-\upl y'$, then $y\preccurlyeq 1$. If $y\in L_2\setminus C$ is such that $y''=-\upl y'$, then $y\succ 1$.
\end{claimunnumbered}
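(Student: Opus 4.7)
The plan is to reduce both halves of the claim to Lemma~\ref{asympdiffeqlemma}, which asserts that in an asymptotic field any two non-constant solutions of a differential equation $y'' = \ell y'$ are simultaneously $\succ 1$ or simultaneously $\preccurlyeq 1$. Thus it suffices to exhibit, in each of $L_1$ and $L_2$, a single distinguished non-constant witness solution of $y'' = -\upl y'$ and to check which side of $1$ that witness lies on; the other solutions are then pinned down by the lemma.

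For the $L_1$ half, the natural witness is the element $\tilde y \in L_1$ from the construction, which satisfies $\tilde y' = f$ and $\tilde y \prec 1$. Differentiating and using $f^{\dagger} = -\upl$ gives $\tilde y'' = f' = f^{\dagger} f = -\upl\, \tilde y'$. Moreover $\tilde y \notin C$ since $\tilde y' = f \neq 0$, and $\tilde y \preccurlyeq 1$. Applying Lemma~\ref{asympdiffeqlemma} in the asymptotic field $L_1$ with $\ell = -\upl$ then forces every $y \in L_1 \setminus C$ satisfying $y'' = -\upl y'$ to satisfy $y \not\succ 1$, i.e.\ $y \preccurlyeq 1$.

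For the $L_2$ half, since $L_2$ is Liouville closed (hence has integration) and $f \in K_1 \subseteq L_2$, there exists $y_0 \in L_2$ with $y_0' = f$. By the property distinguishing $L_2$ from $L_1$, every such $y_0$ satisfies $y_0 \succ 1$. As above $y_0 \notin C$ and $y_0'' = -\upl\, y_0'$, so Lemma~\ref{asympdiffeqlemma} applied in $L_2$ yields that every $y \in L_2 \setminus C$ with $y'' = -\upl y'$ satisfies $y \succ 1$.

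No serious obstacle is anticipated; the only bookkeeping point is the observation that an equation $y'' = -\upl y'$ with $y \notin C$ is equivalent to $(y')^{\dagger} = -\upl$, which makes it natural to produce witnesses from integrals of $f$, and these are precisely the objects controlled by the structural difference between $L_1$ and $L_2$. Once Lemma~\ref{asympdiffeqlemma} is in hand, the claim reduces to the single explicit witness constructed in each $L_i$.
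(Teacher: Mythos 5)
Your proposal is correct and follows essentially the same route as the paper: both arguments produce the witness $\tilde y$ with $\tilde y'=f$ in each $L_i$, verify $\tilde y''=-\upl\tilde y'$ via $f^{\dagger}=-\upl$, and invoke Lemma~\ref{asympdiffeqlemma} to transfer $\tilde y\prec 1$ (resp.\ $\tilde y\succ 1$) to every nonconstant solution. If anything, your direct appeal to the ``$y_0\succ1$ iff $y_1\succ1$'' form of the lemma is slightly cleaner than the paper's detour through $y=c_0\tilde y+c_1$.
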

\begin{proof}[Proof of Claim]
Suppose $y\in L_1\setminus C$ is such that $y''=-\upl y'$. Let $\tilde{y}\in L_1^{\prec 1}$ be such that $\tilde{y}' = f$. Then $\tilde{y}\in L_1\setminus C$ since $f\neq 0$. Furthermore $\tilde{y}'' = -\upl \tilde{y}'$ so there are $c_0\in C^{\times}$ and $c_1\in C$ such that $y = c_0\tilde{y}+c_1$, by Lemma~\ref{asympdiffeqlemma}.
It follows that $y\preccurlyeq 1$.

Next, let $y\in L_2\setminus C$ and let $\tilde{y}\in L_2$ be such that $\tilde{y}' = f$. Then $\tilde{y}\not\in C$ because $\tilde{y}\succ 1$ and $\tilde{y}'' = -\upl \tilde{y}'$. As in the first case, it will follow from Lemma~\ref{asympdiffeqlemma} that $y\succ 1$.
\end{proof}
It follows from the claim that $L_1$ and $L_2$ are not isomorphic over $K$.

Finally, consider the case that $K$ is not real closed, and has rational asymptotic integration. By the above case, the real closure $K^{\text{rc}}$ has two Liouville closures $L_1$ and $L_2$ which are not isomorphic over $K^{\text{rc}}$. These two Liouville closures will also not be isomorphic over $K$, as real closures are unique-up-to-unique-isomorphism.
\end{proof}

\noindent
The next lemma concerns the appearances of gaps in arbitrary Liouville $H$-field extensions, not necessarily extensions occurring as the tops of Liouville towers.

\begin{lemma}
\label{nogap}
Suppose $K$ is grounded or is $\upl$-free and $L$ is a Liouville $H$-field extension of $K$. Then $L$ does not have a gap.
\end{lemma}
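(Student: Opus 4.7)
The plan is to embed $L$ into an ambient $\upl$-free Liouville closed $H$-field $M\supseteq K$ in which $\Psi_L$ is cofinal in $\Psi_M$; ADH~\ref{uplfreegoingdown} then forces $L$ to be $\upl$-free (when $L$ is ungrounded), and in that case $L$ has asymptotic integration by ADH~\ref{nogapcreator} and in particular has no gap. If $L$ is grounded, it has no gap for free by ADH~\ref{ACtrichotomy}.

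In the case $K$ is $\upl$-free, the construction of $M$ follows the recipe in the proof of Proposition~\ref{uplfreeLClosures}(1): form a maximal $\{\text{(a)},\text{(e)},\text{(f)},\text{(g)},\text{(h)}\}$-Liouville tower on $K$ with top $M$. Iteratively applying Lemma~\ref{uplfreerc}, Propositions~\ref{lambdafreesmallexpint}, \ref{lambdafreesmallint}, \ref{lambdafreebigint}, ADH~\ref{uplfreebigexpint}, and Lemma~\ref{uplfreedirectedunion} at limit stages shows that $M$ is $\upl$-free (and Liouville closed by maximality). Every admissible step is either immediate (types (a), (e), (f), (g)) or satisfies ``$\Psi$ cofinal in the new $\Psi$'' (type (h), by ADH~\ref{uplfreebigexpint}), so $\Psi_K$ remains cofinal in $\Psi_M$. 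Any Liouville closure of $L$ is also a Liouville closure of $K$, hence by the uniqueness half of Proposition~\ref{uplfreeLClosures}(1) is isomorphic to $M$ over $K$, so we may identify $L$ with an $H$-subfield of $M$; the chain $\Psi_K\subseteq\Psi_L\subseteq\Psi_M$ with $\Psi_K$ cofinal in $\Psi_M$ gives $\Psi_L$ cofinal in $\Psi_M$, and ADH~\ref{uplfreegoingdown} completes this case.

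The grounded case follows along the same lines, after an initial groundedness-escape step: iteratively adjoin an integral of the current $\max\Psi$ along a transfinite chain of grounded $H$-field extensions of $K$, and let $K^{*}$ be the resulting directed union; by ADH~\ref{uplfreedirecteduniongrounded}, $K^{*}$ is $\upl$-free. Then run the previous safe-tower construction starting from $K^{*}$ to produce a $\upl$-free Liouville closed $M\supseteq K^{*}$. By the uniqueness of the Liouville closure of $K$ in the grounded case (\cite[10.6.23]{adamtt}), $L$ embeds into $M$ over $K$. The main obstacle is verifying the required cofinality: if $L$ is itself grounded, there is nothing to check (no gap by ADH~\ref{ACtrichotomy}); if $L$ is ungrounded, then the subsequence of steps in the Liouville construction of $L$ from $K$ that created new $\psi$-values above $\max\Psi_K$ must ``keep pace'' with the $K^{*}$-chain used to build $M$, so that $\Psi_L$ is cofinal in $\Psi_M$, after which ADH~\ref{uplfreegoingdown} finishes. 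This last bookkeeping of $\psi$-values in the grounded case is the principal subtlety of the proof.
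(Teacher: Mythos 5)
Your $\upl$-free case is correct and is essentially the paper's argument: build the top $M$ of a maximal $\{\text{(a),(e),(f),(g),(h)}\}$-tower, observe that $\Psi_K$ stays cofinal in $\Psi_M$ because those steps are either immediate or cofinality-preserving, embed $L$ into $M$ via uniqueness of the Liouville closure, and apply ADH~\ref{uplfreegoingdown}.

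The grounded case, however, has a genuine gap exactly where you flag ``the principal subtlety'': you assert that if $L$ is ungrounded then $\Psi_L$ must be cofinal in $\Psi_M$, but you do not prove it, and the mechanism you sketch --- tracking ``the subsequence of steps in the Liouville construction of $L$ from $K$'' --- is not available, because $L$ is an \emph{arbitrary} Liouville $H$-field extension of $K$, not one presented as a tower; there are no ``steps'' to bookkeep. The actual argument (which the paper gives) works inside $M=\bigcup_n\ell^n(K)$, where $\{s^n(\max\Psi_K):n<\omega\}$ is cofinal in $\Psi_M$, and splits on whether this sequence is contained in $\Psi_L$. If it is, $\Psi_L$ is cofinal in $\Psi_M$ and ADH~\ref{uplfreegoingdown} applies. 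If it is not, take the least $N$ with $s^N(\max\Psi_K)\in\Psi_L$ but $s^{N+1}(\max\Psi_K)\notin\Psi_L$: then $s^N(\max\Psi_K)$ cannot be asymptotically integrated in $L$ (an asymptotic integral would put $s^{N+1}(\max\Psi_K)$ into $\Psi_L$), and since a gap never lies in the $\Psi$-set, the trichotomy ADH~\ref{ACtrichotomy} forces $s^N(\max\Psi_K)=\max\Psi_L$, i.e.\ $L$ is grounded --- which is precisely the missing justification that an ungrounded $L$ must fall into the cofinal case. Without this successor-function dichotomy your grounded case does not close.
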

\begin{proof}
We first consider the case that $K$ is $\upl$-free. Let $M$ be the Liouville closure of $K$ which was constructed in the proof of Proposition~\ref{uplfreeLClosures}. We claim that $\Psi$ is cofinal in $\Psi_M$. This follows from the fact that $M$ is constructed as the top of an $\{\text{(a),(e),(f),(g),(h)}\}$-tower on $K$: the $\Psi$-set remains unchanged when passing to extensions of type (a), (e), (f) or (g) and for extensions of type (h), the original $\Psi$-set is cofinal in the larger $\Psi$-set by ADH~\ref{uplfreebigexpint}. Finally, as $M$ is the unique Liouville closure of $K$ up to isomorphism over $K$, we may identify $L$ with a subfield of $M$ which contains $K$. In particular, $\Psi_L$ is cofinal in $\Psi_M$. As $M$ is $\upl$-free, so is $L$ by ADH~\ref{uplfreegoingdown}. In particular, $L$ has rational asymptotic integration and thus does not have a gap.

We next consider the case that $K$ is grounded. Let $M$ be the Liouville closure of $K$ as constructed in the proof of~\cite[10.6.24]{adamtt} and the remarks following it. In particular, using the notation from the remarks following the proof of~\cite[10.6.24]{adamtt}, $M = \bigcup_{n<\omega}\ell^n(K)$ where $\ell^0(K) = K$ and for each $n$, $\ell^{n+1}(K)$ is a grounded Liouville $H$-field extension of $K$ such that $\max\Psi_{\ell^{n+1}(K)} = s(\max\Psi_{\ell^n(K)})$. Thus the set $\{s^n(\max\Psi):n<\omega\}$ is a cofinal subset of $\Psi_M$. We now identify $L$ with a subfield of $M$ that contains $K$ and consider two cases:

(Case 1: $\{s^n(\max\Psi):n<\omega\}\not\subseteq\Psi_L$) 
  In this case there is a least $N<\omega$ such that $s^N(\max\Psi)\in\Psi_L$ but $s(s^N(\max\Psi))\in\Psi_M\setminus\Psi_L$. This implies that the element $s^N(\max\Psi)\in\Psi_L$ cannot be asymptotically integrated. The only way this can happen is if $s^N(\max\Psi) = \max\Psi_L$. Thus $L$ is grounded and does not have a gap.

(Case 2: $\{s^n(\max\Psi):n<\omega\}\subseteq\Psi_L$) In this case $\Psi_L$ is cofinal in $\Psi_M$ and so $L$ is $\upl$-free by ADH~\ref{uplfreegoingdown}. This implies that $L$ has rational asymptotic integration and therefore does not have a gap.
\end{proof}

\noindent
We also give a characterization of the dichotomy of Theorem~\ref{1or2LClosures} entirely in terms of gaps appearing in Liouville towers and arbitrary Liouville extensions:

\begin{cor}
\label{exists2equivalences}
The following are equivalent:
\begin{enumerate}
\item $K$ has exactly two Liouville closures up to isomorphism over $K$,
\item there is a Liouville tower $(K_{\lambda})_{\lambda\leq\mu}$ on $K$ such that some $K_{\lambda}$ has a gap,
\item every maximal Liouville tower $(K_{\lambda})_{\lambda\leq\mu}$ on $K$ has some $K_{\lambda}$ with a gap,
\item there is a Liouville tower $(K_{\lambda})_{\lambda\leq \mu}$ on $K$ with $\mu\geq\omega$ such that either $K_0, K_1$ or $K_2$ has a gap,
\item there is an $H$-field $L$ which has a gap and is a Liouville extension of $K$.
\end{enumerate}
\end{cor}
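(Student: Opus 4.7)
The plan is to establish the cycle $(1) \Rightarrow (3) \Rightarrow (2) \Rightarrow (5) \Rightarrow (1)$ together with $(1) \Rightarrow (4) \Rightarrow (2)$, using Theorem~\ref{1or2LClosures}, Lemma~\ref{nogap}, and the Liouville-tower facts collected in ADH~\ref{LTFacts}. Two arrows are immediate: $(3) \Rightarrow (2)$ because maximal Liouville towers exist by ADH~\ref{LTFacts}(2), and $(4) \Rightarrow (2)$ because a tower of length $\geq \omega$ is still a tower.

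For $(1) \Rightarrow (3)$, take any maximal Liouville tower $(K_\lambda)_{\lambda\leq\mu}$ on $K$; its top $K_\mu$ is Liouville closed by ADH~\ref{LTFacts}(3). If no $K_\lambda$ with $\lambda<\mu$ had a gap, ADH~\ref{LTFacts}(4) would identify $K_\mu$ as the unique Liouville closure of $K$ up to isomorphism over $K$, contradicting (1). For $(2) \Rightarrow (5)$, given a tower in which $K_{\lambda_0}$ has a gap, set $L := K_{\lambda_0}$: applying ADH~\ref{LTFacts}(1)(a) to the truncated tower $(K_\lambda)_{\lambda\leq\lambda_0}$ shows $L$ is a Liouville $H$-field extension of $K$ with a gap. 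For $(5) \Rightarrow (1)$ I argue the contrapositive: if $K$ has exactly one Liouville closure, Theorem~\ref{1or2LClosures} makes $K$ either grounded or $\upl$-free, and then Lemma~\ref{nogap} prohibits any Liouville $H$-field extension from having a gap, contradicting (5); since Theorem~\ref{1or2LClosures} also bounds the number of Liouville closures above by two, (5) in fact forces exactly two.

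For $(1) \Rightarrow (4)$ I use Theorem~\ref{1or2LClosures} to split into cases. If $K$ has a gap, then $K_0 = K$ already exhibits a gap. Otherwise $K$ has asymptotic integration and is not $\upl$-free, and I mirror the construction from the proof of Proposition~\ref{uplfreeLClosures}(2): if $K$ is not real closed, take $K_1 = K^{\mathrm{rc}}$ (an extension of type (a)) and $K_2 = K_1(f)$ of type (h) via ADH~\ref{gapcreatorlemma}, producing a gap at $v(f)$ in $K_2$; if $K$ is already real closed, take $K_1 = K(f)$ of type (h) directly. In either case some $K_i$ with $i\leq 2$ has a gap; extend this initial segment to a maximal Liouville tower $(K_\lambda)_{\lambda\leq\mu}$. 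Since a Liouville closed $H$-field has asymptotic integration and hence no gap, the gap-containing level $K_i$ is not Liouville closed, so the tower must extend strictly past it. The main obstacle I foresee is verifying that the resulting $\mu$ satisfies $\mu\geq\omega$: one must check that the alternating integration, exponential-integration, and real-closure steps needed to reach Liouville closure span at least $\omega$ many tower levels, which holds because each finite extension of types (a)--(h) typically leaves new integrals or exponential integrals unadjoined.
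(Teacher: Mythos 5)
Your proposal is correct, and for one implication it takes a genuinely different route from the paper. The paper proves $(2)\Rightarrow(1)$ directly, by transfinite induction on the tower: assuming $K$ has a unique Liouville closure, it shows every $K_{\lambda}$ is grounded or $\upl$-free (hence gap-free), invoking the preservation results for each extension type (a)--(h) separately (Lemma~\ref{uplfreerc}, Propositions~\ref{lambdafreesmallexpint}, \ref{lambdafreesmallint}, \ref{lambdafreebigint}, ADH~\ref{uplfreebigexpint}, etc.). You instead factor this as $(2)\Rightarrow(5)\Rightarrow(1)$: truncating the tower at the gap level and applying ADH~\ref{LTFacts}(1)(a) yields the witness for (5), and then you reuse the paper's own $(5)\Rightarrow(1)$ argument via Lemma~\ref{nogap} and Theorem~\ref{1or2LClosures}. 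This is a legitimate shortcut with no circularity (Lemma~\ref{nogap} does not depend on the corollary), and it buys you a shorter proof that avoids the case analysis over extension types; what it gives up is the extra information the paper's induction extracts, namely that in the one-closure case \emph{every} level of \emph{every} Liouville tower is grounded or $\upl$-free. Two small points of care: in $(1)\Rightarrow(4)$, when $K$ has asymptotic integration but not \emph{rational} asymptotic integration, $K_1=K^{\text{rc}}$ already has a gap and you cannot then perform a type (h) step on $K_1$ via ADH~\ref{gapcreatorlemma} (which requires asymptotic integration); this is harmless since $K_1$ already witnesses the gap, but your case split should acknowledge it, and in the remaining case you need that $K^{\text{rc}}$ inherits non-$\upl$-freeness (via ADH~\ref{uplfreegoingdown} and $\Psi_{K^{\text{rc}}}=\Psi$) before applying ADH~\ref{gapcreatorlemma}. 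The verification that $\mu\geq\omega$ is treated as routine in the paper as well.
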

\begin{proof}
(4) $\Rightarrow$ (2) and (3) $\Rightarrow$ (2) are clear. (1) $\Rightarrow$ (3) and (1) $\Rightarrow$ (5) follow from ADH~\ref{LTFacts}(\ref{LTFactsNoGap}). 


(1) $\Rightarrow$ (4): If $K$ has exactly two Liouville closures up to isomorphism over $K$, then in particular $K$ itself is not Liouville closed. A routine argument shows that every maximal Liouville tower $(K_{\lambda})_{\lambda\leq\mu}$ has $\mu\geq\omega$. By Theorem~\ref{1or2LClosures} either $K$ has a gap or $K$ has asymptotic integration and is not $\upl$-free. If $K$ has a gap, then for any maximal Liouville tower $(K_{\lambda})_{\lambda\leq\mu}$, $K_0$ has a gap. Otherwise, the proof of Proposition~\ref{uplfreeLClosures} shows how we can arrange either $K_1$ or $K_2$ to have a gap.


(2) $\Rightarrow$ (1): We will prove the contrapositive. Suppose that $K$ has exactly one Liouville closure up to isomorphism over $K$ and let $(K_{\lambda})_{\lambda\leq\mu}$ be a Liouville tower on $K$. We will prove by induction on $\lambda$ that $K_{\lambda}$ is either grounded or $\upl$-free, and thus no $K_{\lambda}$ has a gap. The case $\lambda=0$ is clear and the limit ordinal case is taken care of by ADH~\ref{uplfreedirecteduniongrounded} and Lemma~\ref{uplfreedirectedunion}. Suppose $\lambda = \nu+1$ for some ordinal $0\leq \nu<\mu$. If $K_{\lambda}$ is a real closure of of $K_{\nu}$, then $K_{\lambda}$ will be grounded if $K_{\nu}$ is by Definition~\ref{divisiblehulldef}(1) and $K_{\lambda}$ will be $\upl$-free if $K_{\nu}$ is by Lemma~\ref{uplfreerc}. By the inductive hypothesis, $K_{\lambda}$ will never be an extension of type (b) or (c). If $K_{\lambda}$ is an extension of type (d) then $K_{\lambda}$ will also be grounded by~\cite[10.2.3]{adamtt}. Extensions of type (e), (f) and (g) are necessarily immediate extensions, so if $K_{\nu}$ is grounded then so is $K_{\lambda}$ and if $K_{\nu}$ is $\upl$-free then so is $K_{\lambda}$ by Propositions~\ref{lambdafreesmallexpint},~\ref{lambdafreesmallint}, and~\ref{lambdafreebigint}. Finally, if $K_{\lambda}$ is an extension of type (h), then if $K_{\nu}$ is grounded, then so is $K_{\lambda}$ by~\cite[10.5.20]{adamtt}, and if $K_{\nu}$ is $\upl$-free then so is $K_{\lambda}$ by ADH~\ref{uplfreebigexpint}.

(5) $\Rightarrow$ (1): Suppose $K$ has a Liouville $H$-field extension with a gap. Then by Lemma~\ref{nogap}, $K$ has a gap or $K$ has asymptotic integration and is not $\upl$-free. By Theorem~\ref{1or2LClosures}, it follows that $K$ has exactly two Liouville closures up to isomorphism over $K$.
\end{proof}

\begin{remark}
\label{MZerrata}
The implication $(2) \Rightarrow (1)$ of our Corollary~\ref{exists2equivalences} above occurs without proof in~\cite{MZ} (see item (II) before~\cite[6.11]{MZ}).
Also, $(1) \Leftrightarrow (5)$ of our Corollary~\ref{exists2equivalences} is stated without proof in~\cite{ADA} (see the paragraph after~\cite[4.3]{ADA}).
\end{remark}

\section*{Acknowledgements}

\noindent
The author thanks the referee for the very careful reading of the manuscript and many helpful suggestions, Chris Miller for suggesting Example~\ref{arctanexample}, Santiago Camacho and Elliot Kaplan for their helpful comments, Matthias Aschenbrenner for his encouragement, and especially Lou van den Dries for his gentle guidance and endless patience.

\bibliographystyle{amsplain}	
\bibliography{refs}

\end{document}